\newtheoremstyle{citing}
  {3pt}
  {3pt}
  {\itshape}
  {}
  {\bfseries}
  {.}
  {.5em}
  {\thmnote{#3}}
\theoremstyle{citing}
\theoremstyle{plain}
\newtheorem{intro_theorem}{Theorem}
\newtheorem*{intro_theorem2}{Theorem}
\theoremstyle{plain}
\newtheorem{theorem}{Theorem}[section]
\newtheorem{lemma}[theorem]{Lemma}
\newtheorem{corollary}[theorem]{Corollary}
\theoremstyle{remark}
\newtheorem{remark}[theorem]{Remark}
\theoremstyle{definition}
\newtheorem{miniremark}[theorem]{}
\newcounter{counter1}
\newcounter{counter2}
\newcommand{\Var}{\mathbf{V}}     
\newcommand{\IVar}{\mathbf{IV}}   
\newcommand{\Lp}[1]{\mathbf{L}_{#1}}
\newcommand{\Sob}[3]{\mathbf{W}_{#1}^{{#2},{#3}}}
\newcommand{\ccspace}[1]{\mathscr{K}(#1)}
\newcommand{\qspace}{\mathbf{Q}}
\newcommand{\nat}{\mathscr{P}}
\newcommand{\integers}{\integers}
\newcommand{\rel}{\mathbf{R}}
\newcommand{\grass}[2]{\mathbf{G}(#1,#2)}
\newcommand{\orthproj}[2]{\mathbf{O}^\ast({#1},{#2})}
\newcommand{\pp}{\mathbf{p}}
\newcommand{\qq}{\mathbf{q}}
\newcommand{\oball}[2]{\mathbf{U}(#1,#2)}
\newcommand{\cball}[2]{\mathbf{B}(#1,#2)}
\newcommand{\density}{\boldsymbol{\Theta}}
\newcommand{\unitmeasure}[1]{\boldsymbol{\alpha}(#1)}
\newcommand{\isoperimetric}[1]{\boldsymbol{\gamma}(#1)}
\newcommand{\cylind}[3]{\mathbf{C} ( #1, #2, #3 )}
\newcommand{\cylinder}[4]{\mathbf{C} ( #1, #2, #3, #4 )}
\newcommand{\weakD}{\mathbf{D}}
\newcommand{\ud}{\ensuremath{\,\mathrm{d}}}
\DeclareMathOperator{\with}{:}
\newcommand{\classification}[3]{{#1} \cap \{ {#2} \with {#3} \}}
\newcommand{\eqclassification}[3]{{(#1)} \cap \{ {#2} \with {#3} \}}
\newcommand{\bigclassification}[3]{{#1} \cap \big \{ {#2} \with {#3} \big \}}
\newcommand{\project}[1]{#1_\natural}
\newcommand{\pluslim}[1]{\to {#1}+}
\newcommand{\lIm}{[}
\newcommand{\rIm}{]}
\newcommand{\biglIm}{\big [}
\newcommand{\bigrIm}{\big ]}
\newcommand{\vdim}{{m}}
\newcommand{\codim}{{n-m}}
\newcommand{\adim}{{n}}
\newcommand{\norm}[3]{\boldsymbol{|} #1 \boldsymbol{|}_{#2;#3}}
\newcommand{\idnorm}[4]{\boldsymbol{|} #1 \boldsymbol{|}_{#2,#3;#4}}
\newcommand{\dnorm}[3]{\idnorm{{#1}}{-1}{{#2}}{{#3}}}
\newcommand{\printRoman}[1]{\setcounter{counter1}{#1}\Roman{counter1}}
\newcommand{\leftB}{{\left < \rule{0pt}{2ex} \right .}}
\newcommand{\rightB}{{\left . \rule{0pt}{2ex} \right >}}
\DeclareMathOperator{\without}{\sim}
\newcommand{\restrict}{\mathop{\llcorner}}
\DeclareMathOperator{\aplus}{(+)}
\newcommand{\class}[1]{#1}
\newcommand{\tint}[2]{{\textstyle\int_{#1}^{#2}}}
\newcommand{\tfint}[2]{{\textstyle\fint_{#1}^{#2}}}
\newcommand{\tsum}[2]{{\textstyle\sum_{#1}^{#2}}}
\renewcommand{\max}{\sup}
\renewcommand{\min}{\inf}
\DeclareMathOperator{\card}{card}
\newcommand{\Clos}[1]{\mathop{\mathrm{Clos}}#1}
\newcommand{\measureball}[2]{{#1}\,{#2}}
\DeclareMathOperator{\Nor}{Nor}     
\DeclareMathOperator{\Tan}{Tan}     
\DeclareMathOperator{\spt}{spt}     
\DeclareMathOperator{\im}{im}       
\DeclareMathOperator{\Int}{Int}     
\DeclareMathOperator{\Lip}{Lip}     
\DeclareMathOperator{\dmn}{dmn}     
\DeclareMathOperator{\dist}{dist}   
\DeclareMathOperator{\Hom}{Hom}     
\DeclareMathOperator{\graph}{graph} 
\DeclareMathOperator{\ap}{ap}       
\DeclareMathOperator{\Lap}{Lap}
\newcommand{\taumu}[2]{\boldsymbol{\tau}_{#1} \circ \boldsymbol{\mu}_{#2}}
\newcommand{\mutau}[2]{\boldsymbol{\mu}_{1/{#1}} \circ \boldsymbol{\tau}_{-{#2}}}
\begin{document}


\title{Second order rectifiability of integral varifolds of locally bounded
first variation}
\author{Ulrich Menne\thanks{The author acknowledges financial support via the
DFG Forschergruppe 469. The major part of this work was accomplished while the
author was at the University of T\"ubingen. Some parts were done at the ETH
Z\"urich and the work was put in its final form at the AEI Golm. \textit{AEI
publication number.} AEI-2008-065}} \maketitle
\begin{abstract}
	It is shown that every integral varifold in an open subset of
	Euclidean space whose first variation with respect to area is
	representable by integration can be covered by a countable collection
	of submanifolds of the same dimension of class $\class{2}$ and that
	their mean curvature agrees almost everywhere with the variationally
	defined generalised mean curvature of the varifold.
\end{abstract}

\begin{center}
	{\small \emph{2000 Mathematics Subject Classification.} Primary 49Q15;
Secondary 35J60.}
\end{center}

\section*{Introduction}
\paragraph{Overview} In the present paper the existence of an approximate
second order structure for integral varifolds in Euclidean space whose first
variation with respect to area is representable by integration is established.
Such varifolds are called ``of locally bounded first variation'' in
\cite{MR87a:49001}. Moreover, it is proven that the variationally defined
generalised mean curvature of the varifold agrees almost everywhere with the
mean curvature induced from the approximate second order structure. This
problem can be considered a geometric, nonlinear, higher multiplicity version
of the following linear one: Prove existence of approximate second order
differentials for weakly differentiable functions whose distributional
Laplacian is representable by integration (i.e., by a ``vector-valued Radon
measure'') and show that these differentials satisfy the equation Lebesgue
almost everywhere. Clearly, the linear case itself is not too hard to solve,
and in fact follows immediately from classical results if the distributional
Laplacian is integrable with respect to Lebesgue measure to a power larger
than $1$. Nevertheless, the main objective of the present paper is to develop
a method which is based on the study of the nearly linear case and is
sufficiently robust to be applied to the present elliptic system of geometric
partial differential equations involving higher multiplicity.

Results of the type obtained in the present paper have proven useful for
example in the context of Brakke's mean curvature flow or sharp and diffuse
interfaces or image reconstruction or the Willmore functional, see
\cite{MR485012,MR1906780,MR2047075,MR2253464,arxiv.0902.1816v1,
MR1959769,rsch:willmore} and the references therein.
\paragraph{Result of the present paper in the context of known results} Fix
positive integers $\vdim$ and $\adim$ with $\vdim < \adim$. The principal
result is as follows, see Section \ref{sec:notation} for the notation used.
\begin{intro_theorem} [see \ref{thm:main_theorem}] \label{ithm:c2}
	Suppose $U$ is an open subset of $\rel^\adim$, $V \in \IVar_\vdim ( U
	)$ and $\| \delta V \|$ is a Radon measure.

	Then there exists a countable collection $C$ of $\vdim$ dimensional
	submanifolds of $\rel^\adim$ of class $\class{2}$ such that $\| V \| (
	U \without \bigcup C ) = 0$ and each member $M$ of $C$ satisfies
	\begin{gather*}
		\mathbf{h} ( V; z ) = \mathbf{h} ( M ; z ) \quad \text{for
		$\| V \|$ almost all $z \in U \cap M$}.
	\end{gather*}
\end{intro_theorem}

In the terminology of Anzellotti and Serapioni \cite[3.1]{MR1285779} the first
part of the conclusion can be expressed equivalently by the condition that
$\classification{U}{z}{0 < \density^\vdim ( \| V \|, z ) < \infty}$ meets
every compact subset of $U$ in a set which is $( \mathscr{H}^\vdim, \vdim )$
rectifiable of class $\mathscr{C}^2$. The second part of the assertion is
sometimes called ``locality of the mean curvature'', see Sch\"atzle \cite[\S
4]{rsch:willmore}.

Theorem \ref{ithm:c2} contains (and reproves) the fact that $\mathbf{h}(V;z)
\in \Nor^\vdim ( \| V \|, z )$ for $\| V \|$ almost all $z$ previously
obtained by Brakke \cite[5.8]{MR485012}, see \ref{remark:brakke}. Moreover, it
is worth noting, see \ref{remark:hutchinson}, that if $V$ is a curvature
varifold with boundary in $U$ in the sense of Mantegazza \cite[Definition
3.1]{MR1412686} then $V$ satisfies the hypotheses of Theorem \ref{ithm:c2}
and, taking $C$ as in its conclusion, the second fundamental form of $V$
agrees almost everywhere with the second fundamental form induced by the
members $M$ of $C$.

Evidently, Theorem \ref{ithm:c2} implies that the function mapping $\| V \|$
almost every $z$ onto the orthogonal projection of $\rel^\adim$ onto the
approximate $\vdim$ dimensional tangent plane of $\| V \|$ at $z$ is $(\|V\|,
\vdim)$ approximately differentiable. If the first variation of $\| V \|$
satisfies the integrability condition \eqref{hp} below with sufficiently large
exponent $p$ then this map is in fact differentiable in a stronger $\Lp{2} (
\| V \|, \Hom ( \rel^\adim, \rel^\adim))$ sense. Whenever $U$ is an open
subset of $\rel^\adim$, $V \in \IVar_\vdim ( U )$ and $1 \leq p \leq \infty$,
the varifold $V$ is said to satisfy \eqref{hp} if and only if $\| \delta V \|$
is a Radon measure and, if $p > 1$,
\begin{gather} \label{hp}
	\begin{split}
		& ( \delta V ) (g) = - \tint{}{} \mathbf{h} ( V; z ) \bullet g
		(z) \ud \| V \| z \quad \text{for $g \in \mathscr{D} ( U,
		\rel^\adim )$}, \\
		& \mathbf{h}( V ; \cdot ) \in \Lp{p} ( \| V \| \restrict K,
		\rel^\adim ) \quad \text{whenever $K$ is a compact subset of
		$U$}.
	\end{split}
	\tag{$H_p$}
\end{gather}
\begin{intro_theorem} [see \ref{thm:decay_rates} and \ref{remark:l2_differentiability}] \label{ithm:l2_diff}
	Suppose $U$ is an open subset of $\rel^\adim$, $1 \leq p \leq \infty$,
	and $V \in \IVar_\vdim ( U )$ satisfies \eqref{hp}.

	If either $\vdim = 1$ or $\vdim = 2$ and $p > 1$ or $\vdim > 2$ and
	$p \geq 2\vdim /(\vdim+2)$, then for $\| V \|$ almost all $a$
	\begin{gather*}
		\tfint{\cball{a}{r}}{} ( | R (z) - R (a) - \left < R (a) (z-a),
		\ap DR (a) \right > |/|z-a| )^2 \ud \| V \| z \to 0
	\end{gather*}
	as $r \to 0+$ where $R(z) = \project{\Tan^\vdim ( \| V \|, z )} \in
	\Hom ( \rel^\adim, \rel^\adim )$ and the approximate differential is
	taken with respect to $( \| V \|, \vdim )$.
\end{intro_theorem}

With the possible exception of the case $\vdim = 2$ this differentiability
result is optimal with respect to the assumptions on $p$, i.e. whenever $\vdim
> 2$ and $\frac{\vdim p}{\vdim-p} < 2$ there exists an integral varifold
satisfying \eqref{hp} not having the property in question, see
\ref{remark:diff_optimal}.

In previous work Sch\"atzle established the following result in codimension
one of the existence of submanifolds of class $\class{\infty}$ touching a
given varifold, see \cite[Proposition 4.1, Theorem 5.1]{MR2064971} where it is
phrased in terms of upper and lower height functions.
\begin{intro_theorem2} [Sch\"atzle \protect{\cite{MR2064971}}] \label{ithm:rsch}
	Suppose $U$ is an open subset of $\rel^\adim$, $p > \vdim = \adim -
	1$, $p \geq 2$, and $V \in \IVar_\vdim ( U )$ satisfies \eqref{hp}.

	Then for $\| V \|$ almost all $a$ there exists $0 < r < \infty$ such
	that
	\begin{gather*}
		\oball{a+v}{r} \cap \spt \| V \| = \emptyset
	\end{gather*}
	whenever $v \in \Nor^\vdim ( \| V \|, a )$ with $|v|=r$.
\end{intro_theorem2}

This is the key to showing that such a varifold satisfies the conclusion of
Theorem \ref{ithm:c2}, see Sch\"atzle \cite[Theorem 6.1]{MR2064971}, and, in
combination with previous results of the author in \cite[3.7,
3.9]{snulmenn.isoperimetric}, also that it satisfies the conclusion of Theorem
\ref{ithm:l2_diff}. Evidently, see for example
\cite[1.2]{snulmenn.isoperimetric}, Sch\"atzle's Theorem does not extend to
the case $p < \vdim$. Also, the use of the theory of viscosity solutions for
fully nonlinear equations, more precisely the results of Caffarelli
\cite{MR1005611} and Trudinger \cite{MR995142}, leads to the restriction to
codimension one, i.e. $\vdim = \adim - 1$.

Therefore, in order to establish Theorem \ref{ithm:c2}, a different method
needs to be developed which is able to deal both with the low integrability of
the generalised mean curvature and with higher codimension. The main
independent result in this process is the following Theorem stated here in the
case of Laplace's operator.
\begin{intro_theorem} [see \ref{thm:criterion}] \label{ithm:crit}
	Suppose $U$ is an open subset of $\rel^\vdim$, $u : U \to \rel^\codim$
	is weakly differentiable, $j \in \{0,1\}$, $1 \leq q < \infty$,
	\begin{gather*}
		h(a,r) = \inf \big \{ \tsum{i=0}{j} r^{i-\vdim/q}
		\norm{\weakD^i (u-v)}{q}{a,r} \with \text{$v \in \mathscr{E} (
		\oball{a}{r}, \rel^\codim )$, $\Lap v = 0$} \big \}
	\end{gather*}
	whenever $a \in U$, $0 < r < \infty$ with $\oball{a}{r} \subset U$ and
	$A$ denotes the set of all $a \in U$ such that
	\begin{gather*}
		\limsup_{r \to 0+} r^{-2} h(a,r) < \infty.
	\end{gather*}

	Then for $\mathscr{L}^\vdim$ almost all $a \in A$ there exists a
	polynomial function $Q_a : \rel^\vdim \to \rel^\codim$ of degree at
	most $2$ such that
	\begin{gather*}
		\lim_{r \to 0+} r^{-2} \tsum{i=0}{j} r^{i-\vdim/q}
		\norm{\weakD^i (u-Q_a)}{q}{a,r} = 0.
	\end{gather*}
\end{intro_theorem}
Here the seminorms $\norm{\cdot}{q}{a,r}$ correspond to $\Lp{q} (
\mathscr{L}^\vdim \restrict \oball{a}{r} )$. The weaker statement which
results when the condition $\Lap v = 0$ is replaced by $D^2 v = 0$ is
contained in Calder\'on and Zygmund \cite[Theorem 5]{MR0136849} if $q >1$.
However, the construction of affine comparison functions at a given point from
information on the distributional Laplacian of $u$ may -- for integral orders
of differentiability -- fail at individual points, see
\cite[8.6]{snulmenn:decay.v2}. This corresponds to the well known fact of the
nonexistence of Schauder estimates for the H\"older exponent $1$. In this
respect the value of the current theorem stems from the fact that harmonic
comparison functions are readily constructed independent of the order of
differentiability considered, cp. \ref{corollary:lebesgue_points_p}. In fact,
if $j = 1$, $q>1$ and denoting by $T \in \mathscr{D}' ( U , \rel^\codim )$
the distributional Laplacian of $u$ then
\begin{gather*}
	\Gamma^{-1} h (a,r) \leq r^{1-\vdim/q} \dnorm{T}{q}{a,r} \leq \Gamma h
	(a,r)
\end{gather*}
whenever $a \in U$, $0 < r < \infty$, $\oball{a}{r} \subset U$ and $u |
\oball{a}{r} \in \Sob{}{1}{q} ( \oball{a}{r}, \rel^\codim )$ where $\Gamma$ is
a positive, finite number depending only on $\adim$ and $q$ and
$\dnorm{\cdot}{q}{a,r}$ denotes the seminorm corresponding to $\big (
\Sob{0}{1}{q/(q-1)} ( \oball{a}{r}, \rel^\codim ) \big )^\ast$. In particular, if
$T$ is representable by integration and $q < \vdim/(\vdim-1)$ if $\vdim > 1$
then one verifies $\mathscr{L}^\vdim ( U \without A ) = 0$. An
extensive study of both integral and nonintegral orders of differentiability
for solutions of linear elliptic partial differential equations in
nondivergence form can be found in Calder\'on and Zygmund \cite{MR0136849}.

In passing to divergence form equations, one is naturally lead to consider the
related problem for distributions:
\begin{intro_theorem} [see \ref{corollary:lebesgue_points_p} and
\ref{thm:distrib_diff}] \label{ithm:distribution}
	Suppose $U$ is an open subset of $\rel^\vdim$, $1 \leq q < \infty$, $T
	\in \mathscr{D}' ( U, \rel^\codim )$ and $A$ denotes the set of all $a
	\in U$ such that
	\begin{gather*}
		\limsup_{r \to 0+} r^{-1-\vdim/q} \dnorm{T}{q}{a,r} < \infty.
	\end{gather*}

	Then for $\mathscr{L}^\vdim$ almost every $a \in A$ there exists a
	unique constant distribution $T_a \in \mathscr{D}' ( U, \rel^\codim )$
	such that
	\begin{gather*}
		\lim_{r \to 0+} r^{-1-\vdim/q} \dnorm{T-T_a}{q}{a,r} = 0.
	\end{gather*}
\end{intro_theorem}
This may be seen as a Lebesgue point theorem for distributions. In case $q >
1$, it is in fact a corollary to Theorem \ref{ithm:crit} obtainable by
representing $T$ locally as distributional Laplacian of some function $u$. In
contrast, the case $q=1$ is independent from the other results of the present
paper.

Finally, it should be noted that the proof of Theorem \ref{ithm:crit} only
relies on a priori estimates in Lebesgue spaces, i.e. ``$\Lp{p}$ theory'',
which are known to hold for a much wider class of linear equations, see Agmon,
Douglis and Nirenberg \cite{MR0125307,MR0162050}.

\paragraph{Outline of the proofs}
To prove Theorem \ref{ithm:crit}, one considers the subsets of $A_k$ of $A$ of
all $a \in A$ such $h(a,r) \leq k r^2$ whenever $0 < r < 1/k$. Denoting by
$v_{a,r} : \oball{a}{r} \to \rel^\codim$ harmonic functions essentially
realising the infimum in the definition of $h$, one then uses the partition of
unity with estimates from \cite[3.1.13]{MR41:1976} together with well known a
priori estimates for the Laplace operator to construct functions $v_k :
\rel^\vdim \to \rel^\codim$ with the following properties, see
\ref{lemma:main_lemma}:
\begin{enumerate}
	\item There holds
	\begin{gather*}
		\tsum{i=0}{j} r^{i-\vdim/q} \norm{\weakD^i (v_k-u)}{q}{a,r}
		\leq \Gamma k r^2
	\end{gather*}
	for $a \in A_k$ and $0 < r < (36k)^{-1}$ and $\Gamma$ a positive,
	finite number depending only on $\adim$ and $q$, in particular $v_k
	(x) = u(x)$ for $\mathscr{L}^\vdim$ almost all $x \in A_k$.
	\item The distributional Laplacian of $v_k$ is represented by a
	function locally in $\Lp{\infty} ( \mathscr{L}^\vdim, \rel^\codim )$.
\end{enumerate}
Then clearly $v_k$ locally belongs to $\Sob{}{2}{q} ( \rel^\vdim, \rel^\codim
)$ for $1 \leq q < \infty$ and the conclusion of Theorem \ref{ithm:crit}
follows from by now classical differentiability results for functions in
Sobolev spaces which where also obtained by Calder\'on and Zygmund in
\cite{MR0136849}. An important feature of this proof is that it is readily
adapted to the case where the Laplace operator is replaced by the Euler
Lagrange differential operator $L_F$ corresponding to an integrand $F : \Hom (
\rel^\vdim, \rel^\codim ) \to \rel$ of class $\class{2}$ sufficiently close to
the Dirichlet integrand, i.e. $\Lip D^2 F < \infty$ and
\begin{gather*}
	\big | \left < ( \tau_1, \tau_2 ), D^2 F ( \sigma ) \right > - \tau_1
	\bullet \tau_2 \big | \leq \varepsilon \quad \text{for $\sigma, \tau_1,
	\tau_2 \in \Hom ( \rel^\vdim, \rel^\codim )$}
\end{gather*}
with suitable number $\varepsilon$.

Next, it will explained how this result on a rather restricted class of
differential operators can be used to treat the general case. For this purpose
let $U$ be an open subset of $\rel^\adim$ and let $V \in \IVar_\vdim ( U )$ be such
that $\| \delta V \|$ is a Radon measure. Comparing the behaviour of $V$ near
certain ``good'' points to the behaviour of harmonic functions, a
procedure developed by De~Giorgi in \cite{MR0179651} and Almgren in
\cite{MR0225243}, one proves the \emph{tilt decay estimate}
\begin{gather*}
	\limsup_{r \to 0+} r^{-\tau-\vdim/2} \big ( \tint{\oball{a}{r} \times
	\grass{\adim}{\vdim}}{} | \project{S} - \project{T} |^2 \ud V (z,S)
	\big )^{1/2} < \infty
\end{gather*}
for $V$ almost all $(a,T)$ where $0 < \tau < 1$ if $\vdim \in \{1,2\}$ and
$\tau = \frac{\vdim}{2(\vdim-1)} < 1$ if $\vdim > 2$. This has been done by
the author in \cite[8.6]{snulmenn:decay.v2} extending results of Brakke
\cite[5.7,\,5]{MR485012} who proved the case $\tau = 1/2$ with ``$< \infty$''
replaced by $=0$ which is sufficient for the proof of all Theorems stated in
the Introduction. As the order of differentiability considered is nonintegral,
i.e. $0 < \tau < 1$, the argument applies, in contrast to those of the present
paper, in a direct way to all points satisfying a simple set of conditions,
see \cite[8.3]{snulmenn:decay.v2}.

The principal idea to prove Theorem \ref{ithm:c2} is now to use the tilt decay
estimate, to construct a sequence functions $g_i : \rel^\vdim \to
\rel^\codim$,  $\mathscr{L}^\vdim$ measurable sets $K_i \subset
\rel^\vdim$ and distributions $T_i \in \mathscr{D}' ( \rel^\vdim,
\rel^\codim)$ with the following properties:
\begin{enumerate}
	\item The varifold is covered by suitably rotated graphs of the $g_i |
	K_i$.
	\item The distribution $T_i$ corresponds to the Euler Lagrange
	differential operator associated to the nonparametric area integrand
	$\Phi$ applied to $g_i$.
	\item \label{iitem:diff} There holds
	\begin{gather*}
		\lim_{r \to 0+} r^{-1-\vdim} \tint{\oball{x}{r}}{} | Dg_i
		(\zeta) - Dg_i(x)|^2 \ud \mathscr{L}^\vdim \zeta = 0 \quad
		\text{whenever $x \in K_i$}.
	\end{gather*}
	\item \label{iitem:lip} The Lipschitz constant of the $g_i$ is small.
	\item \label{iitem:dis} The distributions $T_i$ satisfy the conclusion
	of Theorem \ref{ithm:distribution} with $q=1$ and $A$ replaced by
	$K_i$ with
	constant distribution given by the generalised mean curvature of the
	varifold.
\end{enumerate}
Condition \eqref{iitem:lip} is the minimum condition needed to be able to
replace $\Phi$ with some integrand $F$ of the type discussed before in the
definition of $T_i$ without changing it, see \ref{lemma:cutoff}. The basis for
the construction of $g_i$, $K_i$, and $T_i$ is an approximation by $\qspace_Q
( \rel^\codim )$ valued functions where the space $\qspace_Q ( \rel^\codim )$
is isometric to the $Q$ fold product of $\rel^\codim$ divided by the action of
group of permutations of $\{ 1, \ldots, Q \}$. Here the version of the author
in \cite[4.8]{snulmenn:decay.v2} is employed which contains some estimates
designed for the current applications and was obtained by combining and
extending similar constructions of Almgren in \cite[\S 3]{MR1777737} and
Brakke in \cite[5.4]{MR485012}.  This yields Lipschitzian functions $f_i : K_i
\to \qspace_{Q_i} ( \rel^\codim )$ with small Lipschitz constant for suitable
positive integers $Q_i$.  Denoting the ``centre'' of $S \in \qspace_Q (
\rel^\codim )$ by $\boldsymbol{\eta}_Q ( S ) = Q^{-1} \sum_{j=1}^Q y_j$
whenever $y_1, \ldots, y_Q \in \rel^\codim$ correspond to $S$, the functions
$g_i$ are then constructed in \ref{lemma:great_approx} as extensions of
$\boldsymbol{\eta}_{Q_i} \circ f_i$.  In this process the conditions
\eqref{iitem:diff} and \eqref{iitem:dis} are ultimately consequences of the
tilt decay estimate.

The final step in the proof of Theorem \ref{ithm:c2} is now to construct for
fixed $i$ and $x \in K_i$ comparison functions $v_r \in \Sob{}{1}{2} (
\oball{x}{r} , \rel^\codim )$ with $L_F (v_r) =0$ for $0 < r < \infty$ and
estimating $g_i-v_r$ in $\oball{x}{r}$, see
\ref{lemma:l1_estimate}--\ref{lemma:arbi}. The natural choice is to take $v_r$
as solution of the Dirichlet problem with boundary values given by $g_i$.  If
$q$ in \eqref{iitem:dis} would satisfy $q > 1$ this would immediately yield an
estimate of $g_i-v_r$ in $\Sob{}{1}{q} ( \oball{x}{r} , \rel^\codim )$. In
case $q=1$ the estimate needs to be obtained differently, namely, linearising
$F$ and estimating the remaining terms with the help of condition
\eqref{iitem:diff}, one obtains an estimate in $\Lp{1} ( \oball{x}{r},
\rel^\codim )$ instead, see \ref{lemma:ms}. Then the extended version of
Theorem \ref{ithm:crit} with $L_F$ replacing $\Lap$, see \ref{thm:criterion},
implies the first part of Theorem \ref{ithm:c2}.  Recalling condition
\eqref{iitem:dis}, the second part is derived similarly by using functions
$w_r \in \Sob{}{1}{2} ( \oball{x}{r}, \rel^\codim )$ with $L_F ( w_r ) = (
T_i)_x$ where $( T_i )_x$ is the constant distribution corresponding to $T_i$
at $x$ as in Theorem \ref{ithm:distribution}.

\paragraph{Organisation of paper}
In Section \ref{sec:notation} the Notation is fixed. Section \ref{sec:crit}
contains all results which can be phrased solely in terms of elliptic partial
differential equations and distributions, in particular Theorem
\ref{ithm:crit} and the case $q>1$ of Theorem \ref{ithm:distribution}. Section
\ref{sec:main_theorem} is devoted to the proof of Theorem \ref{ithm:c2} whereas
Section \ref{sec:applications} contains Theorem \ref{ithm:l2_diff}. Finally,
Appendix \ref{sec:app} gives the proof of the case $q=1$ of Theorem
\ref{ithm:distribution}.

\paragraph{Acknowledgements} The author offers his thanks to his PhD advisor
Professor Dr. Reiner Sch\"atzle who lead him towards the study of this
problem. The author also thanks Professor Dr. Tom Ilmanen for several related
discussions.

\section{Notation} \label{sec:notation}
The notation from Federer \cite{MR41:1976} and Allard \cite{MR0307015} is used
with some modifications and additions described in \cite[\S 1,\,\S
2]{snulmenn:decay.v2}. Additionally, whenever $M$ is a submanifold of
$\rel^\adim$ of class $\class{2}$ the mean curvature of $M$ at $z \in M$ is
denoted by $\mathbf{h} (M;z)$, cp. Allard \cite[2.5\,(2)]{MR0307015}. And if
$U$ is an open subset of $\rel^\vdim$ and $Y$ is a Banach space then $T$ is
called a \emph{constant distribution in $U$ of type $Y$} if and only if for
some $\alpha \in Y^\ast$ there holds $T(\theta) = \int_U \alpha \circ \theta
\ud \mathscr{L}^\vdim$ for $\theta \in \mathscr{D} ( U, Y )$. Moreover, a
subset of a topological space is called \emph{universally measurable} if and
only if it is measurable with respect to every Borel measure on that space.

The reader might want to recall the following maybe less commonly used symbols
either taken from \cite[2.2.6, 2.8.1, 1.10.1]{MR41:1976} or introduced in
\cite[\S 1]{snulmenn:decay.v2}: $\nat$ denoting the positive integers,
$\oball{a}{r}$ and $\cball{a}{r}$ denoting respectively the open and closed
ball with centre $a$ and radius $r$, $\bigodot^i ( V,W )$ and $\bigodot^i V$
denoting the vector space of all $i$ linear symmetric functions (forms)
mapping $V^i$ into $W$ and $\rel$ respectively and the seminorms
\begin{align*}
	\norm{f}{p}{a,r} & = \big ( \tint{\oball{a}{r}}{} |f|^p \ud
	\mathscr{L}^\vdim \big )^{1/p} \quad \text{if $p < \infty$}, \\
	\norm{f}{\infty}{a,r} & = \inf \{ t \with \mathscr{L}^\vdim (
	\classification{\oball{a}{r}}{x}{|f(x)|>t} ) = 0 \}, \\
	\idnorm{T}{i}{q}{a,r} & = \sup \{ T ( \theta ) \with \theta \in
	\mathscr{D} ( U, \rel^\codim ), \spt \theta \subset \oball{a}{r},
	\norm{D^{-i} \theta}{p}{a,r} \leq 1 \}
\end{align*}
whenever $\vdim, \adim \in \nat$, $\vdim < \adim$, $U$ is an open subset of
$\rel^\vdim$, $a \in \rel^\vdim$, $0 < r < \infty$ with $\oball{a}{r} \subset
U$, $f$ is an $\mathscr{L}^\vdim \restrict \oball{a}{r}$ measurable function
with values in a Hilbert space, $1 \leq p \leq \infty$, $1 \leq q \leq \infty$
with $1/p + 1/q = 1$, $i$ is a negative integer, and $T \in \mathscr{D}' ( U,
\rel^\vdim )$.

\section{A criterion for second order differentiability in Lebesgue spaces}
\label{sec:crit}
The purpose of this section is to prove \ref{thm:criterion} which contains
Theorem \ref{ithm:crit} of the Introduction and to provide the preparations
necessary for its application in Section \ref{sec:main_theorem}.

First, in \ref{miniremark:situation_F} the situation studied is described.
Then, for the convenience of the reader, in
\ref{thm:global_w1p_estimate}--\ref{lemma:solutions} adaptions and
applications of standard theory are carried out. The main ingredient in the
proof of \ref{thm:criterion} is contained in \ref{lemma:main_lemma}. The part
$q>1$ of Theorem \ref{ithm:distribution} is provided in
\ref{corollary:lebesgue_points_p}. Finally, in
\ref{lemma:l1_estimate}--\ref{lemma:arbi} it is shown how a certain
nonintegral differentiability condition on the solution $u$ allows to treat
the case where estimates for $L_F(u)$, see \ref{miniremark:situation_F}, are
only available in $\dnorm{\cdot}{1}{a,r}$.
\begin{miniremark} \label{miniremark:situation_F}
	Suppose $\vdim, \adim \in \nat$, $\vdim < \adim$,
	\begin{gather*}
		\text{$e_1, \ldots, e_\vdim$ and $X_1, \ldots, X_\vdim$}
	\end{gather*}
	are dual orthonormal bases of $\rel^\vdim$ and $\bigodot^1 \rel^\vdim$
	and
	\begin{gather*}
		\text{$\upsilon_1, \ldots, \upsilon_\codim$ and $Y_1, \ldots,
		Y_\codim$}
	\end{gather*}
	are dual orthonormal bases of $\rel^\codim$ and $\bigodot^1
	\rel^\codim$.  The norm $\| \Psi \|$ denotes for any $\Psi \in
	\bigodot^2 \Hom ( \rel^\vdim, \rel^\codim )$ the smallest nonnegative
	number $M$ such that
	\begin{gather*}
		\Psi ( \sigma, \tau ) \leq M | \sigma | | \tau | \quad
		\text{for $\sigma, \tau \in \Hom ( \rel^\vdim, \rel^\codim)$}.
	\end{gather*}
	The expression $\Psi ( \sigma, \tau )$ for $\sigma, \tau \in \Hom (
	\rel^\vdim, \rel^\codim )$ will be denoted alternately by $\left < (
	\sigma, \tau ), \Psi \right >$ and, using $\odot$ to denote
	multiplication in $\bigodot_\ast \Hom ( \rel^\vdim, \rel^\codim )$,
	see \cite[1.9.1]{MR41:1976}, also by $\left < \sigma \odot \tau, \Psi
	\right >$. It equals
	\begin{gather*}
		\sum_{i=1}^\vdim \sum_{j=1}^\codim \sum_{k=1}^\vdim
		\sum_{l=1}^\codim \Psi_{i,j;k,l} \left < \sigma (e_i ), Y_j
		\right > \left < \tau (e_k), Y_l \right >
	\end{gather*}
	where $\Psi_{i,j;k,l} = \Psi ( X_i \, \upsilon_j, X_k \upsilon_l )$
	and $X \, \upsilon$ maps $x \in \rel^\vdim$ onto $X(x) \upsilon \in
	\rel^\codim$ whenever $X \in \bigodot^1 \rel^\vdim$ and $\upsilon \in
	\rel^\codim$.
	
	Let $\Upsilon \in \bigodot^2 \Hom ( \rel^\vdim, \rel^\codim )$ be
	defined by
	\begin{gather*}
		\Upsilon ( \sigma, \tau ) = \sigma \bullet \tau \quad
		\text{for $\sigma, \tau \in \Hom ( \rel^\vdim ,
		\rel^\codim)$},
	\end{gather*}
	and suppose $F : \Hom ( \rel^\vdim , \rel^\codim ) \to \rel$ is of
	class $\class{2}$, $0 \leq \varepsilon < \infty$, and
	\begin{gather*}
		\| D^2 F ( \sigma ) - \Upsilon \| \leq \varepsilon \quad
		\text{whenever $\sigma \in \Hom ( \rel^\vdim , \rel^\codim
		)$}.
	\end{gather*}
	The quantity $\Lip D^2 F$ will be computed with respect to $| \cdot |$
	on $\Hom ( \rel^\vdim, \rel^\codim )$ and $\| \cdot \|$ on $\bigodot^2
	\Hom ( \rel^\vdim, \rel^\codim )$.

	To each such $F$ there corresponds the Euler Lagrange differential
	operator $L_F$ which associates to every $u \in \Sob{}{1}{1} ( U,
	\rel^\codim )$ for some open subset $U$ of $\rel^\vdim$ a distribution
	$L_F (u)$ in $\mathscr{D} ' ( U, \rel^\codim)$ defined by
	\begin{gather*}
		L_F ( u ) ( \theta ) = - {\textstyle\int_U} \left < D \theta
		(x), DF ( \weakD u (x) ) \right > \ud \mathscr{L}^\vdim x
		\quad \text{for $\theta \in \mathscr{D} ( U , \rel^\codim )$}.
	\end{gather*}
	There also occurs the linear function $C_F ( \sigma ) : \bigodot^2 (
	\rel^\vdim , \rel^\codim ) \to \rel^\codim$ which for $\sigma \in \Hom
	( \rel^\vdim, \rel^\codim )$ is given by
	\begin{gather*}
		\left < \phi, C_F ( \sigma ) \right > = \sum_{i=1}^\vdim
		\sum_{j=1}^\codim \sum_{k=1}^\vdim \sum_{l=1}^\codim \left < (
		X_i \upsilon_j, X_k \upsilon_l ), D^2 F ( \sigma ) \right >
		\left < \phi ( e_i, e_k ) , Y_j \right > \upsilon_l
	\end{gather*}
	whenever $\phi \in \bigodot^2 ( \rel^\vdim , \rel^\codim )$.
	The function $C_F ( \sigma )$ is uniquely determined by $D^2 F (
	\sigma)$, see \cite[5.2.11]{MR41:1976}. One obtains by partial
	integration for $u \in \Sob{}{2}{1} ( U, \rel^\codim )$, $\theta \in
	\mathscr{D} ( U, \rel^\codim )$
	\begin{gather*}
		L_F (u) ( \theta ) = {\textstyle\int_U} \theta (x) \bullet
		\left < \weakD^2 u (x), C_F ( \weakD u(x) ) \right > \ud
		\mathscr{L}^\vdim x.
	\end{gather*}
	Sometimes also $S : \bigodot^2 ( \rel^\vdim, \rel^\codim ) \to
	\rel^\codim$ corresponding to the Dirichlet integrand, i.e. $F
	(\sigma) = | \sigma |^2/2$ for $\sigma \in \Hom ( \rel^\vdim,
	\rel^\vdim )$, (and therefore to $\Upsilon$) will be used. Note $\left
	< \phi, S \right > = \sum_{i=1}^\vdim \phi ( e_i, e_i )$ whenever
	$\phi \in \bigodot^2 ( \rel^\vdim, \rel^\codim )$. One may check that
	with $\kappa = 2^{1/2} \vdim (\codim)$
	\begin{gather*}
		| C_F ( \sigma ) | \leq \kappa \| D^2 F (\sigma) \|, \quad
		| C_F ( \sigma ) - S | \leq \kappa \varepsilon, \\
		| C_F ( \sigma ) - C_F ( \tau ) | \leq \kappa \| D^2 F (
		\sigma ) - D^2 F ( \tau ) \|
	\end{gather*}
	for $\sigma, \tau \in \Hom ( \rel^\vdim, \rel^\codim )$ where
	$| \cdot |$ denotes the norm associated to the inner product on $\Hom
	\big ( \bigodot^2 ( \rel^\vdim, \rel^\codim ) , \rel^\codim \big )$,
	see \cite[1.7.9, 1.10.6]{MR41:1976}.
\end{miniremark}
\begin{theorem} \label{thm:global_w1p_estimate}
	Suppose $\adim \in \nat$ and $1 < p < \infty$.

	Then there exist positive, finite numbers $\varepsilon$ and $\Gamma$
	with the following property.

	If $\adim > \vdim \in \nat$, $\Upsilon$ is as in
	\ref{miniremark:situation_F}, $a \in \rel^\vdim$, $0 < r < \infty$,
	\begin{gather*}
		\text{$A : \oball{a}{r} \to {\textstyle\bigodot^2} \Hom (
		\rel^\vdim, \rel^\codim )$ is $\mathscr{L}^\vdim
		\restrict \oball{a}{r}$ measurable}, \\
		\| A (x) - \Upsilon \| \leq \varepsilon \quad \text{whenever
		$x \in \oball{a}{r}$},
	\end{gather*}
	then for every 
	$T \in \mathscr{D}' ( \oball{a}{r}, \rel^\codim )$ with
	$\dnorm{T}{p}{a,r} < \infty$ there exists an $\mathscr{L}^\vdim
	\restrict \oball{a}{r}$ almost unique $u \in \Sob{0}{1}{p} (
	\oball{a}{r}, \rel^\codim)$ such that
	\begin{gather*}
		- \tint{\oball{a}{r}}{} \left < D \theta (x) \odot
		\weakD u(x), A (x) \right > \ud \mathscr{L}^\vdim x = T
		( \theta ) \quad \text{for $\theta \in \mathscr{D} (
		\oball{a}{r}, \rel^\codim )$}.
	\end{gather*}
	Moreover, whenever $u$ and $T$ are related as above there holds
	\begin{gather*}
		\norm{\weakD u}{p}{a,r} \leq \Gamma \dnorm{T}{p}{a,r}.
	\end{gather*}
\end{theorem}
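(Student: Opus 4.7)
The plan is to view the equation as a perturbation of the uncoupled system of Laplacians (the case $A = \Upsilon$ corresponds to componentwise $-\Lap$) and to reduce to this unperturbed case by a Banach fixed point argument. Since the pointwise hypothesis $\| A(x) - \Upsilon \| \leq \varepsilon$ controls the perturbation uniformly, a sufficiently small $\varepsilon$ will absorb the error into the final solution estimate.

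First, for the unperturbed case $A = \Upsilon$, I would record the classical fact that for every $T \in \mathscr{D}' ( \oball{a}{r}, \rel^\codim )$ with $\dnorm{T}{p}{a,r} < \infty$ there exists a unique $u \in \Sob{0}{1}{p} ( \oball{a}{r}, \rel^\codim )$ solving
\begin{gather*}
    - \tint{\oball{a}{r}}{} D \theta \bullet \weakD u \ud \mathscr{L}^\vdim = T ( \theta ) \quad \text{for $\theta \in \mathscr{D}( \oball{a}{r}, \rel^\codim )$},
\end{gather*}
together with the scale-invariant estimate $\norm{\weakD u}{p}{a,r} \leq \Gamma_0 \dnorm{T}{p}{a,r}$, where $\Gamma_0$ depends only on $\adim$ and $p$. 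After translation and dilation to the unit ball this is the Calder\'on--Zygmund $\Lp{p}$-theory for the Dirichlet problem for the Laplacian: represent $T$ via the Riesz isomorphism $W^{-1,p} \simeq \Lp{p}/\ker \Div$ as $\Div g$ with $g \in \Lp{p}$ of comparable norm, and solve $-\Lap u = \Div g$ componentwise.

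Next, I would define an affine map $\Phi : \Sob{0}{1}{p} ( \oball{a}{r}, \rel^\codim ) \to \Sob{0}{1}{p} ( \oball{a}{r}, \rel^\codim )$ sending $v$ to the unique unperturbed-case solution associated with the distribution
\begin{gather*}
    \theta \mapsto T ( \theta ) + \tint{\oball{a}{r}}{} \left< D \theta \odot \weakD v, A - \Upsilon \right> \ud \mathscr{L}^\vdim.
\end{gather*}
H\"older's inequality combined with the pointwise bound on $A - \Upsilon$ shows that this distribution has $\dnorm{\cdot}{p}{a,r}$-seminorm at most $\dnorm{T}{p}{a,r} + \varepsilon \norm{\weakD v}{p}{a,r}$, so by the unperturbed estimate
\begin{gather*}
    \norm{\weakD ( \Phi(v_1) - \Phi(v_2) )}{p}{a,r} \leq \Gamma_0 \varepsilon \norm{\weakD (v_1 - v_2)}{p}{a,r}.
\end{gather*}
Choosing $\varepsilon := 1/(2\Gamma_0)$ (this pins down the $\varepsilon$ appearing in the statement) makes $\Phi$ a strict contraction on the Banach space $\Sob{0}{1}{p} ( \oball{a}{r}, \rel^\codim )$ endowed with the norm $\norm{\weakD \cdot}{p}{a,r}$. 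Banach's fixed point theorem then produces a unique fixed point $u$, which is the desired solution, and the inequality $\norm{\weakD u}{p}{a,r} \leq \Gamma_0 ( \dnorm{T}{p}{a,r} + \varepsilon \norm{\weakD u}{p}{a,r} )$ rearranges to the stated bound with $\Gamma := 2 \Gamma_0$.

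The real obstacle is the unperturbed case, which depends on the classical scale-invariant $\Lp{p}$-Dirichlet theory for $-\Lap$ on a ball; once this is cited from standard references, the perturbative step reduces to routine contraction-mapping bookkeeping and requires no new analytic ingredient.
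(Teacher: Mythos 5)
Your proposal is correct and takes essentially the same route as the paper. The paper's first step, ``by the Neumann series it is enough to consider the case $\varepsilon = 0$,'' is exactly your contraction-mapping argument in different clothing (the iterates of your affine map $\Phi$ starting from $0$ are precisely the partial sums of the Neumann series for $(I - L_\Upsilon^{-1}(L_\Upsilon - L_A))^{-1}$); after that, both arguments invoke the Hahn--Banach representation $T = -\Div g$ with $\dnorm{T}{p}{a,r} = \norm{g}{p}{a,r}$ and then cite the classical scale-invariant $\Lp{p}$ estimate for the Dirichlet Laplacian on a ball.
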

\begin{proof}
	By the Neumann series (cf. \cite[3.1.11]{MR41:1976}) it is enough to
	consider the case $\varepsilon = 0$. Note also that there exists $g
	\in \Lp{p} ( \mathscr{L}^\vdim \restrict \oball{a}{r}, \Hom (
	\rel^\vdim , \rel^\codim ) )$ with $T ( \theta ) = -
	\int_{\oball{a}{r}} g \bullet D\theta \ud \mathscr{L}^\vdim$ for
	$\theta \in \mathscr{D} ( \oball{a}{r}, \rel^\codim)$ and
	$\dnorm{T}{p}{a,r} = \norm{g}{p}{a,r}$ by Hahn Banach's theorem.

	The conclusion then follows from \cite[Theorem 10.15]{MR1962933} in
	case $p \geq 2$ to which the case $p < 2$ reduces by use of a duality
	argument.
%
\end{proof}
\begin{theorem} \label{thm:linear_A}
	Suppose $\adim \in \nat$, $1 < q < \infty$, and $1 < p < \infty$.

	Then there exists a positive, finite number $\varepsilon$ with the
	following property.

	If $\adim > \vdim \in \nat$, $\Upsilon$ is as in
	\ref{miniremark:situation_F}, $a \in \rel^\vdim$, $0 < r < \infty$,
	\begin{gather*}
		\text{$A : \oball{a}{r} \to {\textstyle\bigodot^2} \Hom (
		\rel^\vdim, \rel^\codim )$ is $\mathscr{L}^\vdim
		\restrict \oball{a}{r}$ measurable}, \\
		\| A (x) - \Upsilon \| \leq \varepsilon \quad \text{whenever
		$x \in \oball{a}{r}$},
	\end{gather*}
	and $u \in \Sob{}{1}{q} ( \oball{a}{r}, \rel^\codim)$,
	$T \in \mathscr{D}' ( \oball{a}{r}, \rel^\codim )$ satisfy
	\begin{gather*}
		- {\textstyle\int_{\oball{a}{r}}} \left < D \theta (x) \odot
		\weakD u(x), A (x) \right > \ud \mathscr{L}^\vdim x = T
		( \theta ) \quad \text{for $\theta \in \mathscr{D} (
		\oball{a}{r}, \rel^\codim )$},
	\end{gather*}
	then
	\begin{gather*}
		\norm{\weakD u}{p}{a,r/2} \leq \Gamma \big (
		r^{-\vdim-1+\vdim/p} \norm{u}{1}{a,r} + \dnorm{T}{p}{a,r} \big
		)
	\end{gather*}
	where $\Gamma$ is a positive, finite number depending only on $\adim$
	and $p$.
\end{theorem}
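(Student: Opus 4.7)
The plan is to decompose $u = w + v$ on $\oball{a}{r}$, where $w \in \Sob{0}{1}{p} ( \oball{a}{r}, \rel^\codim )$ absorbs the inhomogeneity $T$ via the global existence result, and $v := u - w$ satisfies the corresponding homogeneous equation. To produce $w$, I would choose $\varepsilon$ small enough (in dependence of $\adim$ and $p$) that Theorem \ref{thm:global_w1p_estimate} applies on $\oball{a}{r}$ and select $w$ so that
$-\tint{\oball{a}{r}}{} \left < D\theta(x) \odot \weakD w(x), A(x) \right > \ud \mathscr{L}^\vdim x = T ( \theta )$
for every $\theta \in \mathscr{D} ( \oball{a}{r}, \rel^\codim )$, together with $\norm{\weakD w}{p}{a,r} \leq \Gamma_1 \dnorm{T}{p}{a,r}$. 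By subtraction, $v$ is weakly differentiable and solves the associated homogeneous equation on $\oball{a}{r}$.

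The heart of the argument is then the interior estimate $\norm{\weakD v}{p}{a,r/2} \leq \Gamma_2 r^{-\vdim-1+\vdim/p} \norm{v}{1}{a,r}$. In the base case $A = \Upsilon$ the function $v$ is weakly harmonic and hence real-analytic by Weyl's lemma, and the classical interior gradient estimate $| \weakD v (x) | \leq C \rho^{-\vdim-1} \norm{v}{1}{x,\rho}$ for $\cball{x}{\rho} \subset \oball{a}{r}$ — deduced componentwise from the mean value property — yields the claim upon integrating over $x \in \oball{a}{r/2}$. For general $A$ with $\| A(x) - \Upsilon \| \leq \varepsilon$, I would rewrite the equation for $v$ in the form $\Delta v = \Div ( ( \Upsilon - A ) \weakD v )$ and apply the $A = \Upsilon$ version on nested balls $\oball{a}{s} \subset \oball{a}{t}$ with $r/2 \leq s < t \leq r$ to obtain a one-scale inequality
$\norm{\weakD v}{p}{a,s} \leq C_1 s^{\vdim/p} (t-s)^{-\vdim-1} \norm{v}{1}{a,t} + C_2 \varepsilon \norm{\weakD v}{p}{a,t}$
with $C_1, C_2$ depending only on $\adim$ and $p$. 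A standard iteration lemma of Giaquinta-Hildebrandt type absorbs the last term into the left hand side as long as $C_2 \varepsilon < 1$.

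To assemble the conclusion, I would apply Poincaré on $\Sob{0}{1}{p}(\oball{a}{r})$ together with Hölder to obtain $\norm{w}{1}{a,r} \leq C r^{1+\vdim-\vdim/p} \norm{\weakD w}{p}{a,r} \leq C r^{1+\vdim-\vdim/p} \dnorm{T}{p}{a,r}$, whence $\norm{v}{1}{a,r} \leq \norm{u}{1}{a,r} + C r^{1+\vdim-\vdim/p} \dnorm{T}{p}{a,r}$. Substituting this into the interior estimate on $v$ and using $\norm{\weakD u}{p}{a,r/2} \leq \norm{\weakD v}{p}{a,r/2} + \norm{\weakD w}{p}{a,r/2}$ produces the stated bound. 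The hard part will be the interior estimate for general $A$: the absorption in the iteration lemma demands $\norm{\weakD v}{p}{a,t} < \infty$ a priori, which is immediate when $p \leq q$ by Sobolev embedding but fails in the complementary range $p > q$. There one first has to upgrade the integrability of $\weakD v$ via a preliminary Gehring-Meyers reverse Hölder estimate — based on the Caccioppoli inequality for $v$ and available precisely because $\|A - \Upsilon\|$ is small — which is what pins down the required dependence of $\varepsilon$ on $\adim$ and $p$.
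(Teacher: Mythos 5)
Your plan closely parallels the paper's: split off the inhomogeneity $T$ via the global existence result \ref{thm:global_w1p_estimate}, reduce the homogeneous interior estimate to the Laplacian by freezing the coefficients (a Simon-type absorption), and reassemble via Poincar\'e. The paper sequences the reductions differently (first force $p=q$, then $\varepsilon = 0$, then $T = 0$), but the underlying ingredients are the same.

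The one genuine gap is the a priori $W^{1,p}$ regularity of $v$ when $p > q$. You correctly flag that the absorption/iteration lemma requires $\norm{\weakD v}{p}{a,t} < \infty$ to start, but the proposed Gehring--Meyers fix is weaker than it appears. Gehring's lemma yields a \emph{fixed} improvement of the exponent, governed by the constant in the Caccioppoli/reverse-H\"older inequality, and that constant does not become better as $\| A - \Upsilon \| \to 0$; so Gehring alone cannot reach an arbitrarily large $p$. What does reach arbitrary $p$ for small $\varepsilon$ is Meyers' $W^{1,p}$ theory, but that presupposes $v$ in the energy space $W^{1,2}$: here $v \in \Sob{}{1}{q} ( \oball{a}{r}, \rel^\codim)$ with $q$ possibly less than $2$, and for $q < 2$ the Caccioppoli test function $\eta^2 ( v - c )$ is not even admissible, since it would have to belong to $\Sob{0}{1}{q/(q-1)} ( \oball{a}{r}, \rel^\codim)$. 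The paper sidesteps the whole difficulty by an approximation step performed \emph{before} any freezing or Poincar\'e: it constructs, via \ref{thm:global_w1p_estimate}, a sequence $u_i \in \Sob{}{1}{p} ( \oball{a}{r}, \rel^\codim)$ satisfying the same equation with $u_i \to u$ in $\Sob{}{1}{q}$, proves the estimate for the $u_i$ (where a priori integrability is free), and passes to the limit. Replacing your Gehring--Meyers step with such an approximation is the robust way to close the gap; the rest of your outline then goes through.
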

\begin{proof}
	Let $0 < \delta \leq 1$, suppose $\adim$, $q$, $p$, $\vdim$,
	$\Upsilon$, $a$, $r$, $A$, $u$, and $T$ satisfy the hypotheses in the
	body of the theorem with $\varepsilon$ replaced by $\delta$ and assume
	$q \leq p$. It will be shown that $u$ satisfies the estimate in the
	conclusion of the theorem provided $\delta$ is suitably small.

	The problem will be reduced.

	First, to the \emph{case $p=q$} by constructing as solutions of
	approximating Dirichlet problems by use of 
	\ref{thm:global_w1p_estimate} a sequence of functions $u_i \in
	\Sob{}{1}{p} ( \oball{a}{r}, \rel^\codim )$ such that $u_i \to u$ in
	$\Sob{}{1}{q} ( \oball{a}{r}, \rel^\codim )$ as $i \to \infty$ and for
	$i \in \nat$
	\begin{gather*}
		- \tint{\oball{a}{r}}{} \left < D \theta (x) \odot \weakD u_i
		  (x), A (x) \right > \ud \mathscr{L}^\vdim x = T ( \theta )
	\quad \text{for $\theta \in \mathscr{D} ( \oball{a}{r}, \rel^\codim )$}
	\end{gather*}
	provided $\delta \leq \inf \{
	\varepsilon_{\ref{thm:global_w1p_estimate}} ( \adim, p ),
	\varepsilon_{\ref{thm:global_w1p_estimate}} ( \adim, q) \}$.

	Secondly, to the \emph{case $p=q$ and $\delta = 0$} by considering
	Simon's absorption lemma in \cite[p.~398]{MR1459795}.
	
	Thirdly, to the \emph{case $p = q$, $\delta = 0$ and $T=0$} by use of
	\ref{thm:global_w1p_estimate} and Poincar\'e's inequality.

	Finally, the remaining case follows by convolution from \cite[Theorems
	2.8, 2.10]{MR1814364}.
\end{proof}
\begin{theorem} \label{thm:linear_B}
	Suppose $\adim \in \nat$ and $1 < p < \infty$.

	Then there exists a positive, finite number $\varepsilon$ with the
	following property.

	If $\adim > \vdim \in \nat$, $S$ is as in
	\ref{miniremark:situation_F}, $a \in \rel^\vdim$, $0 < r < \infty$,
	\begin{gather*}
		\text{$B : \oball{a}{r} \to \Hom \big ( {\textstyle\bigodot^2} (
		\rel^\vdim, \rel^\codim ), \rel^\codim \big )$ is
		$\mathscr{L}^\vdim \restrict \oball{a}{r}$ measurable}, \\
		| B (x) - S | \leq \varepsilon \quad \text{whenever $x \in
		\oball{a}{r}$},
	\end{gather*}
	and $u \in \Sob{}{2}{p} ( \oball{a}{r}, \rel^\codim
	)$, $f \in \Lp{p} ( \mathscr{L}^\vdim \restrict \oball{a}{r},
	\rel^\codim )$ satisfy
	\begin{gather*}
		\left < \weakD^2 u (x), B (x) \right > = f(x) \quad
		\text{for $\mathscr{L}^\vdim$ almost all $x \in
		\oball{a}{r}$},
	\end{gather*}
	then
	\begin{gather*}
		\norm{\weakD^2 u}{p}{a,r/2} \leq \Gamma \big (
		r^{-2-\vdim+\vdim/p} \norm{u}{1}{a,r} + \norm{f}{p}{a,r}
		\big )
	\end{gather*}
	where $\Gamma$ is a positive, finite number depending only on $\adim$
	and $p$.
\end{theorem}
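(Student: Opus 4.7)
The plan follows the outline of Theorem \ref{thm:linear_A} raised by one order of differentiability. The entry point is the identity $\langle\phi,S\rangle=\sum_{i=1}^\vdim \phi(e_i,e_i)$ from \ref{miniremark:situation_F}, which says that for $B=S$ the equation $\langle\weakD^2 u,B\rangle=f$ reduces to the componentwise Poisson equation $\Delta u=f$. In general the hypothesis can be rewritten as
\begin{gather*}
    \Delta u(x)=f(x)+\langle\weakD^2 u(x),S-B(x)\rangle\quad\text{for $\mathscr{L}^\vdim$ almost all $x\in\oball{a}{r}$},
\end{gather*}
with remainder pointwise bounded by $\varepsilon|\weakD^2 u(x)|$. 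The strategy is therefore to prove the constant coefficient base case $B=S$ and treat general $B$ by absorption exactly as in the proof of \ref{thm:linear_A}.

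For the base case $\Delta u=f$, I would invoke the classical interior $W^{2,p}$ Calder\'on--Zygmund estimate for the Laplacian. The standard route is to represent $u$ on a slightly smaller ball as the Newtonian potential of a cutoff of $f$ plus a harmonic function, then apply the $\Lp{p}$ boundedness of the second order Riesz transforms to control the potential contribution by $\norm{f}{p}{a,r}$ and classical mean value interior estimates to control the harmonic part's second derivatives by $r^{-2-\vdim+\vdim/p}\norm{u}{1}{a,r}$; equivalently, one may cite the a priori estimates of Agmon, Douglis and Nirenberg \cite{MR0125307,MR0162050} referenced in the Introduction.

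For general $B$, I would apply the base estimate on pairs of concentric balls $\oball{a}{\rho_1}\subset\oball{a}{\rho_2}$ with $r/2\leq\rho_1<\rho_2\leq r$, using a cutoff $\eta\in\mathscr{D}(\oball{a}{\rho_2})$ equal to $1$ on $\oball{a}{\rho_1}$ with $|D^j\eta|\leq C(\rho_2-\rho_1)^{-j}$, to obtain a Caccioppoli-type inequality of the shape
\begin{gather*}
    \norm{\weakD^2 u}{p}{a,\rho_1}\leq C(\rho_2-\rho_1)^{-k}\norm{u}{1}{a,\rho_2}+C\norm{f}{p}{a,\rho_2}+C\varepsilon\norm{\weakD^2 u}{p}{a,\rho_2},
\end{gather*}
after first absorbing the intermediate $\norm{\weakD u}{p}{a,\rho_2}$ contribution by Gagliardo--Nirenberg interpolation between $\norm{u}{1}{a,\rho_2}$ and $\norm{\weakD^2 u}{p}{a,\rho_2}$. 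Then Simon's absorption lemma \cite[p.~398]{MR1459795}, applied exactly as in the proof of \ref{thm:linear_A} to a chain of radii in $[r/2,r]$, absorbs the $C\varepsilon\norm{\weakD^2 u}{p}{a,\rho_2}$ term provided $\varepsilon$ is chosen small depending only on $\adim$ and $p$, producing the asserted bound.

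The main obstacle is setting up the Caccioppoli-type inequality cleanly: one must track the lower order commutator terms produced when the cutoff passes through $\weakD^2$ and keep the interpolation constants explicit in $\rho_2-\rho_1$ so that Simon's lemma actually applies. The base case Calder\'on--Zygmund estimate and the absorption step itself are standard.
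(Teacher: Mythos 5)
Your proposal follows essentially the same route as the paper: the paper invokes the interior $\Sob{}{2}{p}$ estimate of \cite[Theorem 9.11]{MR1814364} as the constant-coefficient (actually continuous-coefficient) base case, combines it with an Ehrling-type interpolation inequality (via \cite[Theorem 7.22]{MR1814364} and Ehrling's lemma) to replace the $\Lp{p}$ norm of $u$ by its $\Lp{1}$ norm, and then absorbs the $\varepsilon$-perturbation using Simon's lemma — exactly the three ingredients you describe, with your Newtonian-potential/Riesz-transform derivation of the base case and your Gagliardo--Nirenberg interpolation substituting for the paper's direct citations.
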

\begin{proof}
	From \cite[Theorem 7.22]{MR1814364} and Ehring's lemma, see e.g.
	\cite[Theorem\,\printRoman{1}.7.3]{MR895589}, it follows that for
	every $0 < \kappa < \infty$ there exists a positive, finite number
	$\Delta$ depending only on $\adim$, $p$, and $\kappa$ such that
	\begin{gather*}
		r^{-2-\vdim/p} \norm{v}{p}{a,r} \leq \kappa r^{-\vdim/p}
		\norm{\weakD^2 v}{p}{a,r} + \Delta r^{-2-\vdim}
		\norm{v}{1}{a,r}
	\end{gather*}
	for $v \in \Sob{}{2}{p} ( \oball{a}{r}, \rel^\codim )$.

	Now, one may readily use \cite[Theorem 9.11]{MR1814364} in conjunction
	with the absorption lemma in Simon \cite[p.~398]{MR1459795} to obtain
	the conclusion.
\end{proof}
\begin{lemma} \label{lemma:higher_differentiability}
	Suppose $\adim \in \nat$, $1 < q < \infty$, and $1 < p < \infty$.

	Then there exists a positive, finite number $\varepsilon$ with the
	following property.

	If $F$ is related to $\varepsilon$ as in \ref{miniremark:situation_F},
	$a \in \rel^\vdim$, $0 < r < \infty$, $u \in \Sob{}{1}{q} (
	\oball{a}{r}, \rel^\codim )$, and $f \in \Lp{p} ( \mathscr{L}^\vdim
	\restrict \oball{a}{r}, \rel^\codim )$ satisfy
	\begin{gather*}
		L_F (u)(\theta) = {\textstyle\int_{\oball{a}{r}}} \theta (x)
		\bullet f (x) \ud \mathscr{L}^\vdim x \quad \text{whenever
		$\theta \in \mathscr{D} ( \oball{a}{r}, \rel^\codim )$},
	\end{gather*}
	then $u$ is twice weakly differentiable and for every affine function
	$P : \rel^\vdim \to \rel^\codim$ there holds
	\begin{gather*}
		\norm{\weakD^2 u}{p}{a,r/2} \leq \Gamma \big (
		r^{-2-\vdim+\vdim/p} \norm{u - P}{1}{a,r} + \norm{f}{p}{a,r}
		\big )
	\end{gather*}
	where $\Gamma$ is a positive, finite number depending only on $\adim$
	and $p$.
\end{lemma}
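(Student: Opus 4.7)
The plan is to combine a gauge reduction to $P = 0$, a linearisation that puts the Euler--Lagrange equation into divergence form with coefficients close to $\Upsilon$, and two successive applications of Theorem \ref{thm:linear_A}: the first to upgrade $\weakD u$ from $\Lp{q}$ to $\Lp{p}$, the second to produce $\weakD^2 u \in \Lp{p}$ via Nirenberg's difference quotient method. The main obstacle is the book-keeping across the two bootstrap steps.

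I would first eliminate $P$ by setting $v = u - P$ and $\tilde F(\sigma) = F(\sigma + \weakD P)$. Since $D^2 \tilde F(\sigma) = D^2 F(\sigma + \weakD P)$, the function $\tilde F$ satisfies the closeness hypothesis of \ref{miniremark:situation_F} with the same $\varepsilon$. The identity $\tint{\oball{a}{r}}{} D\theta \ud \mathscr{L}^\vdim = 0$ for $\theta \in \mathscr{D}(\oball{a}{r}, \rel^\codim)$ shows that the constant term $DF(\weakD P)$ drops out of $L_{\tilde F}(v)(\theta)$, giving $L_{\tilde F}(v) = L_F(u) = f$ as distributions, so I may reduce to $P = 0$. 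The fundamental theorem of calculus then yields $DF(\weakD u(x)) - DF(0) = \left < \weakD u(x), A(x) \right >$ with the measurable $A(x) = \tint{0}{1} D^2 F(t \weakD u(x)) \ud t$ satisfying $\| A(x) - \Upsilon \| \leq \varepsilon$; hence $u$ solves the linear divergence-form equation
\begin{gather*}
	- \tint{\oball{a}{r}}{} \left < D\theta (x) \odot \weakD u (x), A(x) \right > \ud \mathscr{L}^\vdim x = \tint{\oball{a}{r}}{} \theta (x) \bullet f (x) \ud \mathscr{L}^\vdim x
\end{gather*}
for $\theta \in \mathscr{D}(\oball{a}{r}, \rel^\codim)$. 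The right-hand side distribution $T$ satisfies $\dnorm{T}{p}{a,r} \leq \Gamma r \norm{f}{p}{a,r}$ by the zero-trace Poincar\'e inequality on test functions, so choosing $\varepsilon$ below the thresholds supplied by Theorem \ref{thm:linear_A} for the exponent pairs needed below, one application upgrades $\weakD u$ into $\Lp{p}(\mathscr{L}^\vdim \restrict \oball{a}{\rho}, \Hom(\rel^\vdim, \rel^\codim))$ for every $\rho < r$ with a quantitative bound.

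To extract $\weakD^2 u$, I would fix $i \in \{ 1, \ldots, \vdim \}$, take $|h|$ small, and set $v_h^i(x) = h^{-1}(u(x + h e_i) - u(x))$. Translating the divergence equation by $-h e_i$, subtracting, dividing by $h$, and applying the fundamental theorem of calculus along the segment from $\weakD u(x)$ to $\weakD u(x + h e_i)$, one verifies that $v_h^i$ satisfies the same type of equation with coefficient $A_h^i(x) = \tint{0}{1} D^2 F((1-t) \weakD u(x) + t \weakD u(x + h e_i)) \ud t$, still $\varepsilon$-close to $\Upsilon$, and right-hand side $h^{-1}(f(\cdot + h e_i) - f)$. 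Although this difference quotient of $f$ carries no a priori control, shifting $\Delta_{-h}^i$ onto the test function via $\int \theta \bullet \Delta_h^i f \ud \mathscr{L}^\vdim = - \int \Delta_{-h}^i \theta \bullet f \ud \mathscr{L}^\vdim$ together with the standard $\Lp{p/(p-1)}$-bound of $\Delta_{-h}^i \theta$ by $\partial_i \theta$ gives $\dnorm{T_h^i}{p}{a, \rho} \leq \Gamma \norm{f}{p}{a, r}$ uniformly in $h$. A second invocation of Theorem \ref{thm:linear_A} bounds $\norm{\weakD v_h^i}{p}{a, \rho}$ independently of $h$; passing to a weak $\Lp{p}$-limit identifies this limit with $\partial_i \weakD u$, so $u \in \Sob{}{2}{p}(\oball{a}{\rho}, \rel^\codim)$ for every $\rho < r/2$.

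The book-keeping obstacle amounts to cascading the two applications of Theorem \ref{thm:linear_A} on a chain of nested balls and using H\"older's inequality on the $\Lp{p}$ gradient bound from the first bootstrap to dominate $\norm{\weakD u}{1}{a, \rho}$ by a suitable combination of $r^{-\vdim - 1 + \vdim/p} \norm{u}{1}{a, r}$ and $\norm{f}{p}{a, r}$. The exponents then telescope so that the $r^{-1}$ factor arising from the second application yields exactly $r^{-2 - \vdim + \vdim/p}$ in front of $\norm{u}{1}{a, r}$, and adjusting the intermediate radii via a standard covering argument delivers the stated estimate on $\oball{a}{r/2}$ with the constants absorbed into a single $\Gamma$ depending only on $\adim$ and $p$.
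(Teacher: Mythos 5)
Your argument is correct, and the core of it — difference quotients of the linearised divergence-form equation fed into Theorem \ref{thm:linear_A} — is exactly the engine the paper uses. The routes diverge in the book-keeping. The paper skips your gauge reduction: it keeps $F$ and sets $v = u - P$, observing only that $\weakD v_h = \weakD u_h$, and then derives the equation for $v_h$ with coefficients $A_h(x) = \int_0^1 D^2F(t\weakD u(x+he_i)+(1-t)\weakD u(x))\,dt$ directly, without ever shifting the integrand. (Your shift $\tilde F(\sigma) = F(\sigma + \weakD P)$ is fine, but the remark about $\int D\theta = 0$ killing $DF(\weakD P)$ is unnecessary — the identity $D\tilde F(\weakD v) = DF(\weakD u)$ already gives $L_{\tilde F}(v) = L_F(u)$ with no cancellation needed.) More substantively, the paper makes a single application of Theorem \ref{thm:linear_A} to the difference-quotient equation, which produces a bound of $\norm{\weakD^2 v}{p}{a,\rho}$ in terms of $\norm{\weakD v}{1}{a,r}$ and $\norm{f}{p}{a,r}$, and then invokes Simon's absorption lemma (via an interpolation inequality of Ehring type, as in the proof of Theorem \ref{thm:linear_B}) to replace $\norm{\weakD v}{1}$ by $\norm{v}{1}$. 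Your two-bootstrap variant instead trades that absorption step for an extra application of Theorem \ref{thm:linear_A} to the original equation, using H\"older and the resulting $\Lp{p}$ gradient bound to dominate $\norm{\weakD u}{1}{a,r/2}$ directly. Both close the loop; yours is a little longer but sidesteps the iteration lemma, and the exponent telescoping $r^{-\vdim-1+\vdim/p} \cdot r^{-1} = r^{-\vdim-2+\vdim/p}$ works out as you claim. The final covering step to pass from $\oball{a}{r/4}$ to $\oball{a}{r/2}$ is standard and mirrors what the paper does in Lemma \ref{lemma:solutions}.
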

\begin{proof}
	Let $\varepsilon = \varepsilon_{\ref{thm:linear_A}} ( \adim, q, p )$
	and suppose $F$, $a$, $r$, $u$, $f$, and $P$ satisfy the hypotheses in
	body of the lemma.

	Let $v = u - P$, $i \in \{ 1, \ldots, m \}$ and define for $0 < h <
	r$, $x \in \oball{a}{r-h}$
	\begin{gather*}
		u_h (x) = h^{-1} ( u (x+he_i) - u (x) ), \quad v_h
		(x) = h^{-1} ( v ( x + h e_i ) - v (x) ), \\
		A_h (x) = {\textstyle\int_0^1} D^2F ( t \weakD u (x+he_i)
		+ (1-t) \weakD u (x)  ) \ud \mathscr{L}^1 t,
	\end{gather*}
	and let $S_h \in \mathscr{D} ' ( \oball{a}{r-h}, \rel^\codim
	)$ be characterised by
	\begin{gather*}
		S_h ( \theta | \oball{a}{r-h} ) = h^{-1} \tint{\oball{a}{r}}{}
		( \theta (x-he_i) - \theta ( x ) ) \bullet f (x) \ud
		\mathscr{L}^\vdim x
	\end{gather*}
	whenever $\theta \in \mathscr{D} ( \rel^\vdim, \rel^\codim )$ with
	$\spt \theta \subset \oball{a}{r-h}$.  One readily verifies, noting
	$\weakD u_h = \weakD v_h$,
	\begin{gather*}
		- \tint{\oball{a}{r-h}}{} \left < D \theta (x) \odot \weakD
		  v_h (x), A_h (x) \right > \ud \mathscr{L}^\vdim x = S_h (
		  \theta )
	\end{gather*}
	for $\theta \in \mathscr{D} ( \oball{a}{r-h}, \rel^\codim )$. Hence,
	by \ref{thm:linear_A},
	\begin{gather*}
		\norm{\weakD v_h}{p}{a,(r-h)/2} \leq \Delta \big ( ( r -
		h)^{-1-\vdim+\vdim/p} \norm{v_h}{1}{a,r-h} +
		\dnorm{S_h}{p}{a,r-h} \big )
	\end{gather*}
	where $\Delta = \Gamma_{\ref{thm:linear_A}} (\adim,p)$. Since
	$\norm{v_h}{1}{a,r-h} \leq \norm{\weakD v}{1}{a,r}$ and
	$\dnorm{S_h}{p}{a,r-h} \leq \norm{f}{p}{a,r}$, taking the limit $h \to
	0+$ one infers that $v$, hence $u$, is twice weakly differentiable and
	satisfies the desired estimate, using Simon's absorption lemma
	\cite[p.~398]{MR1459795} as before.
\end{proof}
\begin{remark}
	In general, even if $\Lip u \leq L < \infty$ and $P=0$ the condition
	involving $\varepsilon$ cannot be replaced by some uniform strong
	ellipticity condition on $D^2 F ( \sigma )$ for $\sigma \in \Hom
	(\rel^\vdim, \rel^\codim)$ with $\| \sigma \| \leq L$ as may be seen
	from the example of Lawson and Osserman in \cite[Theorem
	7.1]{MR452745}.
\end{remark}
\begin{lemma} \label{lemma:solutions}
	Suppose $\adim \in \nat$, and $1 < q \leq p < \infty$.

	Then there exists a positive, finite number $\varepsilon$ with the
	following property.

	If $\adim > \vdim \in \nat$, $F$ is related to $\varepsilon$ as in
	\ref{miniremark:situation_F}, $\Lip D^2 F < \infty$, $a \in
	\rel^\vdim$, $0 < r < \infty$, and $u_i \in \Sob{}{1}{q} (
	\oball{a}{r}, \rel^\codim )$ with $i \in \{1,2\}$ satisfy $L_F (u_i) =
	0$, then $u_i$ are twice weakly differentiable and for every affine
	function $P : \rel^\vdim \to \rel^\codim$ there holds
	\begin{multline*}
		r^{-\vdim/p+1} \norm{\weakD^2 ( u_2 - u_1 )}{p}{a,r/2} \leq
		\Gamma \big ( r^{-\vdim-1} \norm{u_2 - u_1}{1}{a,r} \\
		+ ( r^{-\vdim-1} \norm{u_1 - P}{1}{a,r} ) \Lip ( D^2 F ) (
		r^{-\vdim-1} \norm{u_2 - u_1}{1}{a,r} ) \big )
	\end{multline*}
	where $\Gamma$ is a positive, finite number depending only on
	$\adim$ and $p$.
\end{lemma}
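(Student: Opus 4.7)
Set $w := u_2 - u_1$. The strategy is to extract a linear second order equation for $w$ in strong form, estimate $\weakD^2 w$ via \ref{thm:linear_B}, and control the resulting inhomogeneity with the help of \ref{lemma:higher_differentiability} and \ref{thm:linear_A}. Take $\varepsilon$ no larger than each of $\varepsilon_{\ref{thm:linear_A}}(\adim,q,2p)$, $\varepsilon_{\ref{lemma:higher_differentiability}}(\adim,q,2p)$, and $\varepsilon_{\ref{thm:linear_B}}(\adim,p)/\kappa$, with $\kappa$ the constant from \ref{miniremark:situation_F}, so that each of the three theorems applies to the coefficients appearing below.

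Since $L_F(u_i) = 0$ and $0 \in \Lp{2p}$, \ref{lemma:higher_differentiability} implies each $u_i$ is twice weakly differentiable on concentric subballs and, via the partial integration formula in \ref{miniremark:situation_F}, satisfies $\langle \weakD^2 u_i, C_F(\weakD u_i)\rangle = 0$ almost everywhere. Subtracting these two identities and adding and subtracting $\langle \weakD^2 u_1, C_F(\weakD u_2)\rangle$ produces
\[
	\langle \weakD^2 w, C_F(\weakD u_2)\rangle = -\langle \weakD^2 u_1, C_F(\weakD u_2) - C_F(\weakD u_1)\rangle =: f \quad \text{a.e.,}
\]
where, by the inequalities in \ref{miniremark:situation_F}, $B := C_F(\weakD u_2)$ satisfies $|B-S| \leq \kappa\varepsilon$ and $|f| \leq \kappa\,\Lip D^2 F\,|\weakD^2 u_1|\,|\weakD w|$. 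Independently, subtracting the weak forms of $L_F(u_2) = L_F(u_1) = 0$ and writing $DF(\weakD u_2)-DF(\weakD u_1)$ as an integral of $D^2F$ along the segment between $\weakD u_1$ and $\weakD u_2$ shows that $w$ solves the homogeneous divergence equation of \ref{thm:linear_A} with coefficient $A(x) := \int_0^1 D^2F(t\weakD u_2(x) + (1-t)\weakD u_1(x))\ud t$, which satisfies $\|A - \Upsilon\| \leq \varepsilon$.

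The two auxiliary $\Lp{2p}$ bounds are then at hand: \ref{lemma:higher_differentiability} applied to $u_1$ (with $f=0$) and the affine function $P$ delivers
\[
	\|\weakD^2 u_1\|_{2p;a,\rho/2} \leq \Gamma\,\rho^{-2-\vdim+\vdim/(2p)}\,\|u_1-P\|_{1;a,\rho},
\]
and \ref{thm:linear_A} with $T=0$ applied to the above equation for $w$ delivers
\[
	\|\weakD w\|_{2p;a,\rho/2} \leq \Gamma\,\rho^{-\vdim-1+\vdim/(2p)}\,\|w\|_{1;a,\rho}
\]
on any $\cball{a}{\rho} \subset \cball{a}{r}$. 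H{\"o}lder's inequality with conjugate exponents $(2p,2p)$ then gives $\|f\|_{p;a,\rho/2} \leq \Gamma\,\Lip D^2 F\,\rho^{-3-2\vdim+\vdim/p}\,\|u_1-P\|_{1;a,\rho}\,\|w\|_{1;a,\rho}$. Feeding this into \ref{thm:linear_B} applied to $\langle \weakD^2 w, B\rangle = f$ on a suitable concentric ball, followed by a finite dimensional covering of $\cball{a}{r/2}$ by balls whose doubles still sit inside $\cball{a}{r}$, produces the asserted inequality after multiplication by $r^{1-\vdim/p}$.

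The main obstacle is purely organisational. Each of \ref{thm:linear_A}, \ref{lemma:higher_differentiability}, and \ref{thm:linear_B} returns an estimate only on the concentric half of its input ball, so the chain of reductions $\weakD^2 w \leftarrow (\weakD^2 u_1, \weakD w) \leftarrow (u_1, w)$ must be arranged on a decreasing sequence of balls whose smallest still covers $\cball{a}{r/2}$ up to a finite dimensional overlap; this routine chaining absorbs the factors $1/2$. The exponent choice $(2p,2p)$ in H{\"o}lder is dictated by the exponent arithmetic: the two $\rho$-powers combine into $\rho^{-2\vdim-2}$, matching the bilinear term on the right-hand side of the claim after multiplication by $r^{1-\vdim/p}$.
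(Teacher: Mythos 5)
Your proposal matches the paper's proof in all essentials: same choice of $\varepsilon$, same strong-form equation $\left\langle \weakD^2 w, C_F(\weakD u_2)\right\rangle = \left\langle \weakD^2 u_1, C_F(\weakD u_1)-C_F(\weakD u_2)\right\rangle$ derived from \ref{miniremark:situation_F}, same $\Lp{2p}$ bounds on $\weakD^2 u_1$ via \ref{lemma:higher_differentiability} and on $\weakD w$ via \ref{thm:linear_A} with $A(x)=\int_0^1 D^2F(t\weakD u_2+(1-t)\weakD u_1)\ud t$, same H\"older $(2p,2p)$ step, same appeal to \ref{thm:linear_B}, and same closing covering argument to manage the shrinking radii. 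The argument is correct and follows the paper's route.
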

\begin{proof}
	Using an elementary covering argument, it is enough to prove the
	assertion with $\norm{\weakD^2 ( u_2 - u_1 )}{p}{a,r/2}$ replaced by
	$\norm{\weakD^2 ( u_2 - u_1 )}{p}{a,r/4}$. For this purpose let
	$\kappa = 2^{1/2} \adim^2$
	\begin{gather*}
		\varepsilon = \inf \{
		\varepsilon_{\ref{lemma:higher_differentiability}} ( \adim,
		 q, 2p ),  \varepsilon_{\ref{thm:linear_B}} ( \adim,
		p ) / \kappa, \varepsilon_{\ref{thm:linear_A}} (
		\adim, q, 2p ) \}, \quad
		\Delta_1 = \Gamma_{\ref{lemma:higher_differentiability}}
		( \adim, 2p ), \\
		\Delta_2 = \Gamma_{\ref{thm:linear_B}} ( \adim, p ), \quad
		\Delta_3 = \Gamma_{\ref{thm:linear_A}} ( \adim, 2p), \quad
		\Gamma = \Delta_2 \, \sup \{ 2^{1 + \adim}, \kappa \Delta_1
		\Delta_3 \}.
	\end{gather*}
	Suppose $F$, $a$, $r$, and $u_i$ satisfy the hypotheses with
	$\varepsilon$ and that $P : \rel^\vdim \to \rel^\codim$ is an affine
	function. In order to show that they satisfy the modified conclusions
	with $\Gamma$, it will be assumed $a=0$ and $r=1$. Abbreviate $\Lambda
	= \Lip D^2 F$.

	By \ref{lemma:higher_differentiability} the functions $u_i$ are
	twice weakly differentiable with
	\begin{gather*}
		\norm{\weakD^2 u_i}{2p}{0,1/2} \leq \Delta_1 \norm{u_i -
		P}{1}{0,1} \quad \text{for $i \in \{1,2\}$}
	\end{gather*}
	and one obtains from \ref{miniremark:situation_F} for
	$\mathscr{L}^\vdim$ almost all $x \in \oball{0}{1}$
	\begin{gather*}
		\left < \weakD^2 u_i (x), C_F ( \weakD u_i (x) ) \right
		> = 0 \quad \text{for $i \in \{1,2\}$}, \\
		\left < \weakD^2 ( u_2 - u_1 ) (x), C_F ( \weakD u_2 (x)
		) \right > = \left < \weakD^2 u_1 (x), C_F ( \weakD u_1
		(x) ) - C_F ( \weakD u_2 (x) ) \right >.
	\end{gather*}
	Therefore by \ref{thm:linear_B}, \ref{miniremark:situation_F} and
	H\"older's inequality
	\begin{multline*}
		\boldsymbol{|} \weakD^2 ( u_2 - u_1 ) \boldsymbol{|}_{a,1/4;p}
		\leq \Delta_2 \big ( 2^{2+m-m/p} \boldsymbol{|} u_2 - u_1
		\boldsymbol{|}_{0,1/2;1} \\
		+ \kappa \Lambda \norm{\weakD^2 u_1}{2p}{0,1/2} \norm{\weakD
		(u_2-u_1)}{2p}{0,1/2} \big).
	\end{multline*}
	To estimate $\norm{\weakD (u_2-u_1)}{2p}{0,1/2}$, one computes for
	$\theta \in \mathscr{D} ( \oball{0}{1}, \rel^\codim )$
	\begin{gather*}
		- {\textstyle\int_{\oball{0}{1}}} \left < D \theta (x) \odot
		\weakD ( u_2 - u_1 ) (x), A (x) \right > \ud
		\mathscr{L}^\vdim x = 0, \\
		\text{where $A(x) = {\textstyle\int_0^1} D^2 F ( t \weakD u_2
		(x) + ( 1-t ) \weakD u_1 (x) ) \ud \mathscr{L}^1 t$},
	\end{gather*}
	and obtains from \ref{thm:linear_A}
	\begin{gather*}
		\norm{\weakD ( u_2 - u_1 )}{2p}{0,1/2} \leq \Delta_3
		\norm{u_2 - u_1}{1}{0,1}
	\end{gather*}
	and the conclusion follows.
\end{proof}
\begin{lemma} \label{lemma:main_lemma}
	Suppose $\vdim, \adim \in \nat$, $\vdim < \adim$, $1 \leq p \leq r <
	\infty$, and $1 < q < \infty$.

	Then there exist a positive, finite number $\varepsilon$, a positive,
	finite number $\Gamma_1$ depending only on $\vdim$ and $p$, and a
	positive, finite number $\Gamma_2$ depending only on $\vdim$, $\adim$,
	$p$, and $r$ with the following property.

	If $F$ is related to $\varepsilon$ as in \ref{miniremark:situation_F},
	$\Lip D^2 F < \infty$, $j \in \{0,1\}$, $A$ is a closed subset of
	$\rel^\vdim$, $u : \rel^\vdim \cap \{ x \with \dist (x,A) < 1 \} \to
	\rel^\codim$ is $j$ times weakly differentiable, $0 \leq \gamma <
	\infty$, and if for each $a \in A$, $0 < \varrho \leq 1$ there are
	$v_{a,\varrho} \in \Sob{}{1}{q} ( \oball{a}{\varrho}, \rel^\codim )$
	and an affine function $P_{a,\varrho} : \rel^\vdim \to \rel^\codim$
	such that
	\begin{gather*}
		L_F ( v_{a,\varrho} ) = 0, \\
		{\textstyle\sum_{i=0}^j} \varrho^{-\vdim/p+i} \norm{\weakD^i (
		u - v_{a,\varrho} )}{p}{a,\varrho} \leq \gamma \varrho^2,
		\quad \varrho^{-\vdim/p} \norm{u -
		P_{a,\varrho}}{p}{a,\varrho} \leq \gamma \varrho
	\end{gather*}
	then there exists a twice weakly differentiable function $v :
	\rel^\vdim \cap \{ x \with \dist (x,A) < \frac{1}{36} \} \to
	\rel^\codim$ with
	\begin{gather*}
		{\textstyle\sum_{i=0}^j} \varrho^{-\vdim/p+i} \norm{\weakD^i (
		u-v)}{p}{a,\varrho} \leq \Gamma_1 \gamma \varrho^2, \\
		\varrho^{-\vdim/r} \norm{\weakD^2 v}{r}{a,\varrho} \leq
		\Gamma_2 \big ( \gamma ( 1 + \Lip ( D^2 F ) \gamma )^2 +
		\varrho^{-\vdim-2} \norm{u - P_{a,2\varrho}}{1}{a,2\varrho}
		\big )
	\end{gather*}
	whenever $a \in A$, $0 < \varrho \leq \frac{1}{36}$.
\end{lemma}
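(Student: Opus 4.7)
The plan is to construct $v$ by gluing the given comparison solutions $v_{a,\varrho}$ via a Whitney-type partition of unity on the open set $\Omega = \rel^\vdim \cap \{ x \with 0 < \dist(x,A) < 1 \}$, and then to derive both estimates by combining the partition-of-unity identities with the a priori bounds from \ref{lemma:higher_differentiability} and \ref{lemma:solutions}. Concretely, I would apply \cite[3.1.13]{MR41:1976} to obtain a countable family of cubes $Q_i$ with centres $c_i$ and side-lengths $r_i$ comparable to $\dist(c_i,A)$, together with $\varphi_i \in \mathscr{D}(\rel^\vdim)$ supported in $Q_i$ satisfying $\sum_i \varphi_i \equiv 1$ on $\Omega$, $|D^k \varphi_i| \leq \Gamma r_i^{-k}$ for $k \in \{0,1,2\}$, and uniformly bounded multiplicity. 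For each $i$ pick $a_i \in A$ nearest $c_i$ and set $\varrho_i = K r_i$ with $K$ a dimensional constant chosen so that $Q_i \subset \oball{a_i}{\varrho_i/8}$. Then define $v(x) = \sum_i \varphi_i(x)\, v_{a_i,\varrho_i}(x)$ for $x \in \Omega$ and $v(x) = u(x)$ for $x \in A$.

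For the first estimate, exploit $\sum_i \varphi_i \equiv 1$ and $\sum_i D\varphi_i \equiv 0$ on $\Omega$ to write $u - v = \sum_i \varphi_i (u - v_{a_i,\varrho_i})$ and $\weakD(u - v) = \sum_i \varphi_i \weakD(u - v_{a_i,\varrho_i}) - \sum_i D\varphi_i\,(v_{a_i,\varrho_i} - u)$. For $a \in A$ and $\varrho \leq 1/36$ every $Q_i$ meeting $\oball{a}{\varrho}$ has $\varrho_i$ comparable to and bounded by $\varrho$; bounded multiplicity, together with the hypothesis on $\norm{\weakD^k (u - v_{a_i,\varrho_i})}{p}{a_i,\varrho_i}$ and $|D\varphi_i| \leq \Gamma r_i^{-1}$, yields the first inequality with $\Gamma_1$ depending only on $\vdim$ and $p$.

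For the second estimate, fix $a \in A$, $\varrho \leq 1/36$. By choice of $K$, every $\oball{a_i}{\varrho_i}$ arising from $Q_i$ meeting $\oball{a}{\varrho}$ lies inside $\oball{a}{2\varrho}$. On $\Omega$, for any auxiliary function $h$ the partition-of-unity identities give the pointwise formula
\begin{gather*}
\weakD^2 v = \sum_i \varphi_i \weakD^2 v_{a_i,\varrho_i} + 2\sum_i D\varphi_i \otimes \weakD(v_{a_i,\varrho_i} - h) + \sum_i D^2 \varphi_i\,(v_{a_i,\varrho_i} - h).
\end{gather*}
For the first sum, apply \ref{lemma:higher_differentiability} on each $\oball{a_i}{\varrho_i/2}$ with $f = 0$ and affine comparison $P = P_{a,2\varrho}$, noting $\norm{v_{a_i,\varrho_i} - P}{1}{a_i,\varrho_i} \leq \norm{v_{a_i,\varrho_i} - u}{1}{a_i,\varrho_i} + \norm{u - P}{1}{a,2\varrho}$; the first term is controlled by $\gamma \varrho_i^{\vdim+2}$ via the hypothesis and H\"older, the second is a single global remainder, and bounded-multiplicity summation in $L^r$ produces exactly the $\Gamma_2 \gamma + \Gamma_2 \varrho^{-\vdim-2}\norm{u - P_{a,2\varrho}}{1}{a,2\varrho}$ contribution. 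For the two remaining sums, take $h = v_{a_{j_0},\varrho_{j_0}}$ chart-locally (with $j_0$ any index of a cube containing the point), so $v_{a_i,\varrho_i} - h$ is a difference of two solutions of $L_F = 0$ on a ball of scale $r_{j_0}$ containing the support of $D^k \varphi_i$; \ref{lemma:solutions} with affine reference $P$ then controls $\norm{\weakD^2 (v_{a_i,\varrho_i} - h)}{r}{}$, and interior Poincar\'e estimates pass to the lower-order quantities $\weakD(v_{a_i,\varrho_i} - h)$ and $v_{a_i,\varrho_i} - h$. Pairing these with $|D^k \varphi_i| \leq \Gamma r_i^{-k}$ and summing yields the $\Gamma_2 \gamma (1 + \Lip(D^2 F)\gamma)^2$ contribution, the square arising because the affine-deviation factor $\norm{v_{a_{j_0},\varrho_{j_0}} - P}{1}{a_{j_0},\varrho_{j_0}}$ appearing in \ref{lemma:solutions} itself already carries a $(1 + \Lip(D^2F)\gamma)$ through the control of $v_{a_{j_0},\varrho_{j_0}} - u$.

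The main obstacle is the careful bookkeeping of scales and references: the cubes $Q_i$ meeting $\oball{a}{\varrho}$ range over all $r_i \lesssim \varrho$, the hypothesis is stated at each cube's intrinsic scale $\varrho_i$, and the conclusion lives at the fixed scale $\varrho$. The delicate point is that the nonlocal remainder $\norm{u - P_{a,2\varrho}}{1}{a,2\varrho}$ must enter the final bound only once, not once per cube; this is what forces $P = P_{a,2\varrho}$ as a common affine reference on all of $\oball{a}{\varrho}$ while still permitting the chart-local choice $h = v_{a_{j_0},\varrho_{j_0}}$, and the choice of $K$ is precisely what makes these global and local references simultaneously compatible with the cube-level application of \ref{lemma:higher_differentiability} and \ref{lemma:solutions}.
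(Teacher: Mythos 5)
Your construction of $v$ by a Whitney partition of unity, and the derivation of the first estimate by the Leibnitz formula, match the paper's route and are essentially correct. The second estimate, however, cannot be obtained the way you propose. When you split $\weakD^2 v$ via the partition-of-unity identities and try to bound the term $\sum_i \varphi_i\, \weakD^2 v_{a_i,\varrho_i}$ by summing local $L^r$ estimates, the best local bound available from \ref{lemma:higher_differentiability} (taking the \emph{local} affine reference $P_{a_i,\varrho_i}$, which is what the hypothesis actually controls) is $\norm{\weakD^2 v_{a_i,\varrho_i}}{r}{a_i,\varrho_i/2} \lesssim \gamma\,\varrho_i^{\vdim/r-1}$. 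Raising to the $r$-th power and summing with bounded overlap gives $\gamma^r\sum_i\varrho_i^{\vdim-r}$, and because a Whitney cover of $\{x : 0 < \dist(x,A) < 1\}$ contains cubes of arbitrarily small side arbitrarily close to $A$, this sum diverges for every $r>1$. Substituting the global reference $P_{a,2\varrho}$, as you suggest, is even worse: the cube-level contribution $\varrho_i^{-2-\vdim+\vdim/r}\norm{u-P_{a,2\varrho}}{1}{a,2\varrho}$ blows up at least as fast. The ``bounded-multiplicity summation'' you invoke therefore does not produce the claimed bound; the macroscopic remainder does not ``enter only once'' merely because the same $L^1$ norm appears in each summand — each summand still carries its own divergent weight $\varrho_i^{-2-\vdim+\vdim/r}$.

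The paper sidesteps the issue entirely by never estimating $\weakD^2 v$ by a direct local sum. It computes $f = L_F(v)$ in strong form and uses $L_F(v_s)=0$ to write, on each patch, $f(y) = \left<\weakD^2 v_s(y), C_F(\weakD v(y)) - C_F(\weakD v_s(y))\right> + \left<\weakD^2(v-v_s)(y), C_F(\weakD v(y))\right>$. In this identity the dangerous $\weakD^2 v_s$ appears only multiplied by $C_F(\weakD v) - C_F(\weakD v_s) \lesssim \Lambda\,|\weakD(v-v_s)|$, a factor of local size $\gamma(1+\Lambda\gamma)h$ which exactly compensates the $h^{-1}$ growth of $\weakD^2 v_s$; the remaining term involves $\weakD^2(v-v_s)$, a difference of near-solutions controlled by \ref{lemma:solutions}. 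This gives $(10h(x))^{-\vdim/r}\norm{f}{r}{x,10h(x)} \lesssim \gamma(1+\Lambda\gamma)^2$ uniformly, and summing produces $\norm{f}{r}{a,\varrho} \lesssim \gamma(1+\Lambda\gamma)^2\varrho^{\vdim/r}$. Only now is \ref{lemma:higher_differentiability} applied once, at the macroscopic scale $\varrho$ with reference $P_{a,\varrho}$, to convert the $f$-bound into the $\weakD^2 v$-bound — that single application is where $\norm{u-P_{a,2\varrho}}{1}{a,2\varrho}$ legitimately enters exactly once. A further technical point you omit: the paper does not use the raw Whitney cover of the complement of $A$ but a truncated one built from $\{\rel^\vdim\without A\}\cup\{\oball{a}{\delta}\with a\in A\}$, so cube sides are bounded below by a multiple of $\delta$; the estimates are then proved for $\delta\leq\varrho\leq 1/18$ with constants independent of $\delta$, and the final $v$ is obtained by a weak-compactness passage $\delta\to 0$. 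That truncation by itself would not salvage a direct summation, since the resulting bound would still degenerate as $\delta\to 0$; the $\delta$-uniformity comes precisely from estimating $L_F(v)$ rather than $\weakD^2 v$.
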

\begin{proof}
	Assume $r \geq q$ and define
	\begin{gather*}
		\varepsilon = \min \{ 1,
		\varepsilon_{\ref{lemma:higher_differentiability}} ( \adim,
		q, 2r ), \varepsilon_{\ref{lemma:solutions}} ( \adim, q, 2r
		), \varepsilon_{\ref{lemma:higher_differentiability}} (
		\adim, q, r ) \}.
	\end{gather*}
	Suppose $F$, $j$, $A$, $u$, $\gamma$, $v_{a,\varrho}$, and
	$P_{a,\varrho}$ are as in the hypotheses in the body of the lemma with
	$\varepsilon$ and abbreviate $\Lambda = \Lip D^2 F$.

	By \ref{lemma:higher_differentiability} and H\"older's inequality
	\begin{gather*}
		{\textstyle\sum_{i=0}^j} \norm{\weakD^i
		v_{a,\varrho}}{2r}{a,1/2} < \infty, \quad
		{\textstyle\sum_{i=0}^j} \norm{\weakD^i u}{p}{a,1/2} <
		\infty
	\end{gather*}
	whenever $a \in A$. Therefore taking limits (for example by use of an
	interpolation inequality similar to \cite[Lemma 6.2.2]{MR0202511} and
	weak compactness properties of Sobolev spaces \cite[Theorem
	3.2.4(e)]{MR0202511}) the conclusion can be deduced from the following
	assertion: \emph{There exist a positive, finite number $\Gamma_1$
	depending only on $\vdim$ and $p$, and a positive, finite number
	$\Gamma_2$ depending only on $\vdim$, $\adim$, $p$ and $r$ such that
	for every $0 < \delta \leq \frac{1}{18}$ there exists a function $v :
	\rel^\vdim \to \rel^\codim$ whose restriction to
	$\classification{\rel^\vdim}{ x}{ \dist (x,A) < \frac{1}{18} }$ is
	twice weakly differentiable satisfying
	\begin{gather*}
		{\textstyle\sum_{i=0}^j} \varrho^{-\vdim/p+i} \norm{\weakD^i (
		u - v )}{p}{a,\varrho} \leq \Gamma_1 \gamma \varrho^2, \\
		(\varrho/2)^{-\vdim/r} \norm{\weakD^2 v}{r}{a,\varrho/2} \leq
		\Gamma_2 \big ( \gamma ( 1 + \Lambda \gamma )^2 +
		(\varrho/2)^{-\vdim-2} \norm{u - P_{a,\varrho}}{1}{a,\varrho}
		\big )
	\end{gather*}
	whenever $a \in A$, $\delta \leq \varrho \leq \frac{1}{18}$.}

	Assume $A \neq \emptyset$, let $\Phi = \{ \rel^\vdim \without A \}
	\cup \{ \oball{a}{\delta} \with a \in A \}$, note $\bigcup \Phi =
	\rel^\vdim$, define $h : \rel^\vdim \to \rel$ by
	\begin{gather*}
		h(x) = {\textstyle\frac{1}{20}} \sup \{ \min \{ 1, \dist (x,
		\rel^\vdim \without U ) \} \with U \in \Phi \} \quad
		\text{for $x \in \rel^\vdim$},
	\end{gather*}
	and apply \cite[3.1.13]{MR41:1976} to obtain a countable subset $S$ of
	$\rel^\vdim$ and functions $\varphi_s : \rel^\vdim \to \{ t \with
	0 \leq t \leq 1 \}$ of class $\class{\infty}$ corresponding to $s \in
	S$ such that with $S_x = \classification{S}{ s}{ \cball{x}{10h(x)}
	\cap \cball{s}{10h(s)} \neq \emptyset }$ for $x \in \rel^\vdim$ and
	a sequence $V_i$ of positive, finite numbers depending only on $\vdim$
	there holds
	\begin{gather*}
		\card S_x \leq ( 129 )^\vdim, \quad \spt \varphi_s \subset
		\cball{s}{10h(s)} \quad \text{for $s \in S$}, \\
		1/3 \leq h(x) / h(s) \leq 3 \quad \text{for $s \in S_x$},
		\quad | D^i \varphi_s (x) | \leq V_i (h(x))^{-i} \quad
		\text{for $s \in S$, $i \in \nat$}, \\
		\sum_{s \in S} \varphi_s (y) = \sum_{s \in S_x} \varphi_s (y)
		= 1, \quad \sum_{s \in S} D^i \varphi_s (y) = \sum_{s \in S_x}
		D^i \varphi_s (y) = 0 \quad \text{for $i \in \nat$}
	\end{gather*}
	whenever $x \in \rel^\vdim$, $y \in \cball{x}{10h(x)}$. Note for
	$x \in \rel^\vdim$, $y \in \cball{x}{10h(x)}$, $s \in S$, $i \in
	\nat$
	\begin{gather*}
		| D^i \varphi_s (y) | \leq V_i ( h(y) )^{-i} \leq ( 20 )^i V_i
		( 10 h(x) )^{-i},
	\end{gather*}
	because $h(x)-h(y) \leq \frac{1}{20} |x-y| \leq \frac{1}{2} h(x)$.
	Choose $\xi : S \to A$ such that
	\begin{gather*}
		| \xi (s) - s | = \dist (s,A) \quad \text{whenever $s \in S$}.
	\end{gather*}
	Note $20 h(x) \leq \max \{ \dist (x,A), \delta \}$ for $x \in
	\rel^\vdim$ and observe
	\begin{gather*}
		\cball{x}{20h(x)} \subset \cball{\xi(s)}{120h(s)}, \quad
		120 h(s) \leq 1
	\end{gather*}
	whenever $x \in \rel^\vdim$, $\dist (x,A) \leq \frac{1}{18}$, $s \in
	S_x$, because
	\begin{gather*}
		| x-s | \leq 10 h(x) + 10 h(s) \leq 40 h(x)
		\leq 2 \max \{ \dist (x,A), \delta \} \leq 1/9, \\
		| s - \xi (s) | = \dist (s,A) \leq |x-s| + \dist (x,A) \leq
		1/6, \\
		| x - \xi (s) | \leq | x-s | + | s - \xi(s) | \leq 40 h(s) +
		20 h(s) = 60 h(s), \\
		| x - \xi (s) | + 20 h(x) \leq 120 h(s) \leq 360 h(x) \leq 1.
	\end{gather*}
	Define $R = \bigcup \{ S_x \with \text{$x \in \rel^\vdim$ and
	$\dist (x,A) \leq \frac{1}{18}$} \}$,
	\begin{gather*}
		v_s = v_{\xi(s),120h(s)} \quad \text{and} \quad P_s =
		P_{\xi(s),120h(s)} \quad \text{for $s \in R$}
	\end{gather*}
	and, denoting by $v_s'$ the extension of $v_s$ to $\rel^\vdim$ by
	$0$, $v : \rel^\vdim \to \rel^\codim$ by
	\begin{gather*}
		v(x) = \sum_{s \in R} \varphi_s (x) v_s' (x) \quad
		\text{whenever $x \in \rel^\vdim$}.
	\end{gather*}

	Suppose for the rest of the proof $x \in \rel^\vdim$ with $\dist
	(x,A) \leq \frac{1}{18}$ and observe
	\begin{gather*}
		v(y) = \sum_{s \in S_x} \varphi_s (y) v_s (y) \quad
		\text{whenever $y \in \cball{x}{10h(x)}$}.
	\end{gather*}
	The asserted weak differentiability is a consequence of
	\ref{lemma:higher_differentiability}.

	One estimates
	\begin{gather*}
		\begin{aligned}
			& \norm{\weakD^i ( u - v_s )}{p}{x,20h(x)} \leq
			\norm{\weakD^i ( u - v_s )}{p}{s,120h(s)} \\
			& \qquad \leq \gamma (120h(s))^{\vdim/p+2-i} \leq
			(18)^{\vdim/p+2} \gamma (20h(x))^{\vdim/p+2-i}
		\end{aligned}
	\end{gather*}
	for $i \in \{0,j\}$, $s \in S_x$, hence by H\"older's inequality
	\begin{gather}
		\begin{aligned}
			& \phantom{\leq} \ (20h(x))^{-\vdim} \norm{u -
			v_s}{1}{x,20h(x)} \\
			& \leq \unitmeasure{\vdim}^{1-1/p}
			{\textstyle\sum_{i=0}^j} ( 20 h(x))^{-\vdim/p+i}
			\norm{\weakD^i (u-v_s)}{p}{x,20h(x)} \leq 2\Delta_1 \gamma
			(20h(x))^2
		\end{aligned} \label{eqn:uvs}
	\end{gather}
	for $s \in S_x$ where $\Delta_1 = \unitmeasure{\vdim}^{1-1/p}
	(18)^{\vdim/p+2}$. Also
	\begin{gather}
		\begin{aligned}
			(20h(x))^{-\vdim} \norm{u - P_s}{1}{x,20h(x)} & \leq
			\unitmeasure{\vdim}^{1-1/p} ( 20 h(x) )^{-\vdim/p}
			\norm{u - P_s}{p}{\xi(s),120h(s)} \\
			& \leq \Delta_1 \gamma ( 20 h(x) ),
		\end{aligned} \notag \\
		(20 h(x) )^{-\vdim} \norm{v_s - P_s}{1}{x,20h(x)} \leq 3
		\Delta_1 \gamma ( 20 h(x)) \label{eqn:vsws}
	\end{gather}
	for $s \in S_x$. Using
	\begin{gather*}
		v(y) -  u(y) = \sum_{s \in S_x} \varphi_s (y) ( v_s (y) - u
		(y) ) \quad \text{whenever $y \in \cball{x}{10h(x)}$}
	\end{gather*}
	and the Leibnitz formula, one obtains from \eqref{eqn:uvs}
	\begin{gather*}
		{\textstyle\sum_{i=0}^j} ( 10h(x) )^{-\vdim/p+i}
		\norm{\weakD^i ( u-v )}{p}{x,10h(x)} \leq \Delta_2 \gamma (
		10h(x))^2
	\end{gather*}
	where $\Delta_2 = \unitmeasure{\vdim}^{1/p-1} 8 \Delta_1 2^{\vdim/p} ( 1 + 20V_1
	) (129)^\vdim$.

	In case $x \in \cball{a}{\varrho}$ for some $a \in A$, $\delta
	\leq \varrho \leq \frac{1}{18}$,
	\begin{gather*}
		20 h(x) \leq \max \{ \dist (x,A), \delta \} \leq \varrho,
		\quad \cball{x}{20h(x)} \subset \cball{a}{2\varrho}
	\end{gather*}
	and Vitali's covering theorem yields a countable subset $T$ of
	$\cball{a}{\varrho}$ such that
	\begin{gather*}
		\text{$\{ \cball{t}{2h(t)} \with t \in T \}$ is disjointed},
		\quad \cball{a}{\varrho} \subset \bigcup \{
		\cball{t}{10h(t)} \with t \in T \}
	\end{gather*}
	and one estimates for $i \in \{0,j\}$
	\begin{gather*}
		\begin{aligned}
			& \norm{\weakD^i ( u - v)}{p}{a,\varrho}^p \\
			& \qquad \leq {\textstyle\sum_{t \in T}}
			\norm{\weakD^i ( u - v )}{p}{t,10h(t)}^p \\
			& \qquad \leq ( \Delta_2 \gamma )^p {\textstyle\sum_{t
			\in T}} ( 10 h(t) )^{\vdim+(2-i)p} \\
			& \qquad = ( 5^{\vdim/p+2-i} \Delta_2 \gamma )^p
			\unitmeasure{\vdim}^{-1-(2-i)p/\vdim}
			{\textstyle\sum_{t \in T}} \mathscr{L}^\vdim (
			\cball{t}{2h(t)} )^{1+(2-i)p/\vdim} \\
			& \qquad \leq ( 5^{\vdim/p+2-i} \Delta_2 \gamma )^p
			\unitmeasure{\vdim}^{-1-(2-i)p/\vdim}
			\mathscr{L}^\vdim (
			\cball{a}{2\varrho})^{1+(2-i)p/\vdim} \\
			& \qquad = \big ( (10)^{\vdim/p+2-i} \Delta_2 \gamma
			\big )^p \varrho^{\vdim+(2-i)p}.
		\end{aligned}
	\end{gather*}
	Therefore one obtains for $a \in A$, $\delta \leq \varrho \leq
	\frac{1}{18}$, $i \in \{0,j\}$
	\begin{gather} \label{eqn:u_v}
		\varrho^{-\vdim/p+i} \norm{\weakD^i ( u-v )}{p}{a,\varrho}
		\leq (10)^{\vdim/p+2} \Delta_2 \gamma \varrho^2
	\end{gather}
	and one may take $\Gamma_1 =2 (10)^{\vdim/p+2} \Delta_2$ in the first
	estimate of the assertion.

	According to \ref{lemma:higher_differentiability} the functions $v_s$
	are twice weakly differentiable and satisfy for $s \in S_x$
	\begin{gather*}
		( 20 h(x) )^{-\vdim/(2r)+2} \norm{\weakD^2 v_s}{2r}{x,10h(x)}
		\leq \Delta_3 (20h(x))^{-\vdim} \norm{v_s - P_s}{1}{x,20h(x)}
	\end{gather*}
	where $\Delta_3 = \Gamma_{\ref{lemma:higher_differentiability}} (
	\adim, 2r )$. Combining this with \eqref{eqn:vsws} yields
	\begin{gather} \label{eqn:d2vs}
		( 10 h(x) )^{-\vdim/(2r)+2} \norm{\weakD^2 v_s}{2r}{x,10h(x)}
		\leq 2^{\vdim/(2r)} 3 \Delta_1 \Delta_3 \gamma ( 10h(x))
	\end{gather}
	for $s \in S_x$.

	Using \ref{lemma:solutions}, one obtains for $s,t \in S_x$
	\begin{multline*}
		(20h(x))^{-\vdim/(2r)+1} \norm{\weakD^2 ( v_s -
		v_t)}{2r}{x,10h(x)} \leq \Delta_4 \big ( ( 20h(x))^{-\vdim-1}
		\norm{v_s - v_t}{1}{x,20h(x)} \\
		+ \Lambda ( ( 20h(x) )^{-\vdim-1} \norm{v_s -
		P_s}{1}{x,20h(x)} ) ( ( 20h(x) )^{-\vdim-1} \norm{v_s -
		v_t}{1}{x,20h(x)} ) \big )
	\end{multline*}
	where $\Delta_4 = \Gamma_{\ref{lemma:solutions}} ( \adim,
	2r)$. Since
	\begin{gather*}
		(20h(x))^{-\vdim} \norm{v_s - v_t}{1}{x,20h(x)} \leq 4\Delta_1
		\gamma ( 20h(x) )^2
	\end{gather*}
	by \eqref{eqn:uvs}, one estimates using \eqref{eqn:vsws}
	\begin{gather*}
		(10h(x))^{-\vdim/(2r)} \norm{\weakD^2 ( v_s - v_t
		)}{2r}{x,10h(x)} \leq \Delta_5 \gamma ( 1 + \Lambda \gamma )
	\end{gather*}
	where $\Delta_5 = 2^{\vdim+2} \Delta_1 \Delta_4 \sup \{ 3 \Delta_1, 1
	\}$. Using an interpolation inequality (which may be proven similarly
	to \cite[Lemma 6.2.2]{MR0202511}), one infers with a positive, finite
	number $\Delta_6$ depending only $\adim$ and $r$
	\begin{align*}
		& \tsum{i=0}{2} (10h(x))^{-\vdim/(2r)+i} \norm{\weakD^i ( v_s
		- v_t)}{2r}{x,10h(x)} \\
		& \qquad \leq
		\begin{aligned}[t]
			\Delta_6 \big ( & ( 10 h(x))^{-\vdim/(2r)+2}
			\norm{\weakD^2 ( v_s - v_t)}{2r}{x,10h(x)} \\
			& + ( 10h(x))^{-\vdim} \norm{v_s - v_t}{1}{x,10h(x)}
			\big)
		\end{aligned} \\
		& \qquad \leq \Delta_6 \big ( \Delta_5 ( 1 + \Lambda \gamma ) +
		2^{\vdim+4} \Delta_1 \big ) \gamma ( 10h(x) )^2.
	\end{align*}
	This implies for $s,t \in S_x$
	\begin{gather*}
		{\textstyle\sum_{i=0}^2} ( 10h(x) )^{-\vdim/(2r)+i}
		\norm{\weakD^i ( v_s - v_t )}{2r}{x,10h(x)} \leq \Delta_7 \gamma (
		1 + \Lambda \gamma ) ( 10 h(x) )^2
	\end{gather*}
	where $\Delta_7 = \Delta_6 ( \Delta_5 + 2^{\vdim+4} \Delta_1 )$. Noting $( v - v_s )
	(y) = \sum_{t \in S_x} \varphi_t (y) ( v_t - v_s ) (y)$ for $s \in
	S_x$, $y \in \oball{x}{10h(x)}$, one infers using the Leibnitz
	formula
	\begin{gather} \label{eqn:divvs}
		(10h(x))^{-\vdim/(2r)+i} \norm{\weakD^i ( v - v_s
		)}{2r}{x,10h(x)} \leq \Delta_8 \gamma ( 1 + \Lambda
		\gamma ) ( 10 h(x) )^2
	\end{gather}
	for $s \in S_x$, $i \in \{0,1,2\}$ where $\Delta_8 = 2 ( 1 + 20 V_1 + 400
	V_2 ) \Delta_7 ( 129)^\vdim$.

	Using \ref{miniremark:situation_F}, one defines
	\begin{gather*}
		f(y) = \left < \weakD^2 v (y) , C_F ( \weakD v (y) ) \right >
	\end{gather*}
	whenever $y \in \oball{z}{10h(z)}$ for some $z \in \rel^\vdim$
	with $\dist (z,A) \leq \frac{1}{18}$ and computes for $s \in S_x$
	\begin{gather*}
		f (y) = \left < \weakD^2 v_s (y), C_F ( \weakD v (y) ) -
		C_F ( \weakD v_s (y) ) \right > + \left < \weakD^2 ( v -
		v_s ) (y), C_F ( \weakD v(y) ) \right >
	\end{gather*}
	for $\mathscr{L}^\vdim$ almost all $y \in \oball{x}{10h(x)}$.
	H\"older's inequality implies
	\begin{align*}
		\norm{f}{r}{x,10h(x)} & \leq \kappa \Lambda
		\norm{\weakD (v-v_s)}{2r}{x,10h(x)} \norm{\weakD^2
		v_s}{2r}{x,10h(x)} \\
		& \phantom{\leq} + 2 \kappa
		\unitmeasure{\vdim}^{1/(2r)} ( 10 h(x))^{\vdim/(2r)}
		\norm{\weakD^2 ( v-v_s)}{2r}{x,10h(x)} ,
	\end{align*}
	hence by \eqref{eqn:d2vs} and \eqref{eqn:divvs}
	\begin{gather*}
		(10h(x))^{-\vdim/r} \norm{f}{r}{x,10h(x)} \leq \Delta_9 \gamma
		( 1 + \Lambda \gamma )^2
	\end{gather*}
	where $\Delta_9 = \kappa \Delta_8 \sup \big \{ 2^{\vdim/(2r)} 3
	\Delta_1 \Delta_3, 2 \unitmeasure{\vdim}^{1/(2r)} \big \}$. Similarly
	but simpler as in the deduction of \eqref{eqn:u_v}, one obtains for
	$\delta \leq \varrho \leq \frac{1}{18}$, $a \in A$
	\begin{gather*}
		\norm{f}{r}{a,\varrho} \leq \Delta_9 (10)^{\vdim/r} \gamma ( 1 +
		\Lambda \gamma )^2 \varrho^{\vdim/r}
	\end{gather*}
	and thus, using \ref{lemma:higher_differentiability} with $\Delta_{10}
	= \Gamma_{\ref{lemma:higher_differentiability}} (\adim,r)$ and
	\eqref{eqn:u_v},
	\begin{gather*}
		\begin{aligned}
			\varrho^{-\vdim/r} \norm{\weakD^2 v}{r}{a,\varrho/2}
			& \leq \Delta_{10} \big ( \varrho^{-\vdim-2} (
			\norm{u-v}{1}{a,\varrho} + \norm{u -
			P_{a,\varrho}}{1}{a,\varrho} ) + \varrho^{-\vdim/r}
			\norm{f}{r}{a,\varrho} \big ) \\
			& \leq \Delta_{11} \big ( \gamma ( 1 + \Lambda
			\gamma)^2 + \varrho^{-\vdim-2} \norm{u -
			P_{a,\varrho}}{1}{a,\varrho} \big )
		\end{aligned}
	\end{gather*}
	where $\Delta_{11} = \Delta_{10} ( \unitmeasure{\vdim}^{1-1/p}
	(10)^{\vdim/p+2} \Delta_2 + \Delta_9 (10)^{\vdim/r} + 1 )$. Therefore
	one may take $\Gamma_2 = 2^{\vdim/r} \Delta_{11}$ in the second
	estimate of the assertion and the proof is completed.
\end{proof}
\begin{remark} \label{remark:diff_lemma}
	In fact, by Calder\'on and Zygmund \cite[Theorem 10\,(ii)]{MR0136849}
	(see also \cite[Lemma 3.7.2]{MR1014685}) or by
	\cite[3.1]{snulmenn.isoperimetric}
	\begin{gather*}
		\lim_{\varrho \pluslim{0}} \varrho^{-2}
		{\textstyle\sum_{i=0}^j} \varrho^{-\vdim/p+i} \norm{\weakD^i
		(u-v)}{p}{a,\varrho} = 0
	\end{gather*}
	for $\mathscr{L}^\vdim$ almost all $a \in A$. Now, Re{\v s}etnyak's
	result in \cite{Reshetnyak_diff} applied to $v$ yields that for
	$\mathscr{L}^\vdim$ almost all $a \in A$ there exists a polynomial
	function $Q_a : \rel^\vdim \to \rel^\codim$ of degree at most $2$ such
	that
	\begin{gather*}
		\limsup_{\varrho \pluslim{0}} \varrho^{-2}
		{\textstyle\sum_{i=0}^j} \varrho^{-\vdim/p+i} \norm{\weakD^i (
		u - Q_a )}{p}{a,\varrho} = 0.
	\end{gather*}
	Alternately, this latter fact could have also been deduced by use of
	Calder\'on and Zygmund \cite[Theorem 12]{MR0136849} (see also
	\cite[Theorem 3.4.2]{MR1014685}).
\end{remark}
\begin{theorem} \label{thm:criterion}
	Suppose $\vdim, \adim \in \nat$, $\vdim < \adim$, $1 \leq p < \infty$,
	and $1 < q < \infty$.

	Then there exists a positive, finite number $\varepsilon$ with the
	following property.

	If $F$ is related to $\varepsilon$ as in \ref{miniremark:situation_F},
	$\Lip D^2 F < \infty$, $U$ is an open subset of $\rel^\vdim$, $j \in
	\{0,1\}$, $u : U \to \rel^\codim$ is weakly differentiable,
	\begin{gather*}
		h(a,r) = \\
		\inf \left \{ 
		{\textstyle\sum_{i=0}^j} r^{-\vdim/p+i} \norm{\weakD^i ( u - v
		)}{p}{a,r} \with \text{$v \in \Sob{}{1}{q} ( \oball{a}{r} ,
		\rel^\codim )$ and $L_F ( v ) = 0$} \right \}
	\end{gather*}
	whenever $\oball{a}{r} \subset U$ for some $a \in \rel^\vdim$, $0 <
	r < \infty$, and if $A$ denotes the set of all $a \in U$ such that
	\begin{gather*}
		\limsup_{r \pluslim{0}} r^{-2} h (a,r) < \infty,
	\end{gather*}
	then $A$ is a Borel set and for $\mathscr{L}^\vdim$ almost all $a \in
	A$ there exists a polynomial function $Q_a : \rel^\vdim \to
	\rel^\codim$ with degree at most $2$ such that
	\begin{gather*}
		\lim_{r \pluslim{0}} r^{-2} {\textstyle\sum_{i=0}^j}
		r^{-\vdim/p+i} \norm{\weakD^i ( u - Q_a )}{p}{a,r} = 0.
	\end{gather*}
\end{theorem}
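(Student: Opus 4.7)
The plan is to reduce the theorem, via a countable decomposition of $A$, to an application of Lemma \ref{lemma:main_lemma} followed by Remark \ref{remark:diff_lemma}.

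\emph{Borel measurability and decomposition.} One first checks that $(a,r) \mapsto h(a,r)$ is Borel measurable on $\classification{\rel^\vdim \times \{s \with 0 < s < \infty\}}{(a,r)}{\oball{a}{r} \subset U}$: the infimum defining $h(a,r)$ may be approximated by a countable family of test functions using separability of $\Sob{}{1}{q}(\oball{0}{1},\rel^\codim)$ together with translation invariance. Hence
\begin{gather*}
	A = {\textstyle\bigcap_k \bigcup_N} \classification{U}{a}{\cball{a}{1/N} \subset U \text{ and } h(a,\varrho) \leq k \varrho^2 \text{ for } 0 < \varrho \leq 1/N}
\end{gather*}
is Borel. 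Since $u$ is weakly differentiable, it is $\mathscr{L}^\vdim$ almost everywhere approximately differentiable by \cite[Theorem 12]{MR0136849}, so at almost every $a \in U$ there is an affine function $P_a : \rel^\vdim \to \rel^\codim$ with $\varrho^{-1-\vdim/p} \norm{u - P_a}{p}{a,\varrho} \to 0$ as $\varrho \pluslim{0}$. For positive integers $k$ and $N$ set
\begin{gather*}
	A_{k,N} = \classification{A}{a}{\cball{a}{1/N} \subset U, \ h(a,\varrho) \leq k \varrho^2 \text{ and } \varrho^{-\vdim/p} \norm{u - P_a}{p}{a,\varrho} \leq k \varrho \text{ for } 0 < \varrho \leq 1/N}.
\end{gather*}
The sets $A_{k,N}$ are Borel and their union covers $A$ up to a null set; by inner regularity of Radon measures, it suffices to establish the conclusion at $\mathscr{L}^\vdim$ almost every point of any closed subset $C \subset A_{k,N}$.

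\emph{Application of the Lemma.} Fix such a closed $C \subset A_{k,N}$. For $a \in C$ and $0 < \varrho \leq 1/N$ pick a near minimiser $v_{a,\varrho}$ of the infimum defining $h(a,\varrho)$ (so that the second hypothesis of Lemma \ref{lemma:main_lemma} holds with $\gamma = 2k$), and set $P_{a,\varrho} = P_a$; fix any $r$ with $p \leq r < \infty$. After translating and rescaling so as to bring the requirement that $u$ be defined on $\classification{\rel^\vdim}{x}{\dist(x,C) < 1}$ into force, Lemma \ref{lemma:main_lemma} produces a twice weakly differentiable function $v$ on a neighbourhood of $C$ with
\begin{gather*}
	\tsum{i=0}{j} \varrho^{-\vdim/p+i} \norm{\weakD^i(u-v)}{p}{a,\varrho} \leq \Gamma_1 \cdot 2k \cdot \varrho^2 \quad \text{for } a \in C,\ 0 < \varrho \leq \tfrac{1}{36},
\end{gather*}
and with a uniform local $\Lp{r}$ bound on $\weakD^2 v$.

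\emph{Conclusion.} Remark \ref{remark:diff_lemma}, applied with this $v$ and $A$ replaced by $C$, provides for $\mathscr{L}^\vdim$ almost every $a \in C$ a polynomial $Q_a : \rel^\vdim \to \rel^\codim$ of degree at most $2$ such that
\begin{gather*}
	\lim_{\varrho \pluslim{0}} \varrho^{-2} \tsum{i=0}{j} \varrho^{-\vdim/p+i} \norm{\weakD^i(u - Q_a)}{p}{a,\varrho} = 0,
\end{gather*}
which is the required limit. Taking the union over countably many closed subsets exhausting each $A_{k,N}$ and then over $k$ and $N$ yields the theorem. The main obstacles are the Borel measurability of $h$ in the first step and the simultaneous uniform verification on $A_{k,N}$ of the two estimates needed by Lemma \ref{lemma:main_lemma}; once these are in place, the argument is merely a bookkeeping of the Lemma together with Remark \ref{remark:diff_lemma}.
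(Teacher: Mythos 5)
Your proposal follows the same overall strategy as the paper: decompose $A$ into sets where the two hypotheses of Lemma~\ref{lemma:main_lemma} hold with a uniform constant, apply the Lemma to produce a twice weakly differentiable comparison function $v$, and then invoke Remark~\ref{remark:diff_lemma} to obtain the degree-two polynomial $Q_a$.

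There are, however, two places where you and the paper diverge, and where your shortcut needs shoring up. First, the affine-comparison hypothesis of Lemma~\ref{lemma:main_lemma} reads $\varrho^{-\vdim/p}\norm{u - P_{a,\varrho}}{p}{a,\varrho} \leq \gamma \varrho$, in $\Lp{p}$. You supply this by claiming that a.e.\ approximate differentiability of a weakly differentiable $u$ gives $\varrho^{-1-\vdim/p}\norm{u - P_a}{p}{a,\varrho} \to 0$. When $j=1$ one has $u \in \Sob{}{1}{p}_{\mathrm{loc}}$ near $A$ (since $h(a,r) < \infty$ forces $\weakD u \in \Lp{p}$ locally), and there $\Lp{p}$-first-order differentiability a.e.\ is indeed classical. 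But when $j=0$ the finiteness of $h$ only yields $u \in \Lp{p}_{\mathrm{loc}}$ with $\weakD u \in \Lp{1}_{\mathrm{loc}}$, and approximate differentiability does not by itself upgrade to $\Lp{p}$-differentiability with the exponent you assert. The paper avoids this by working with the $\Lp{1}$ quantity $h'(a,r) = r\inf_w\norm{T(a,r)-w}{1}{0,1}$, for which Re\v{s}etnyak/Federer give $\limsup_{r\to 0+} r^{-1}h'(a,r) < \infty$ a.e.\ unconditionally; the needed $\Lp{p}$ bound for $P_{a,\varrho}$ is then produced not from $u$ directly but from the smooth comparison solution $v_{a,\varrho}$ (whose second derivatives are controlled by Lemma~\ref{lemma:higher_differentiability} in terms of an $\Lp{1}$ distance to an affine function) together with $h(a,r)\leq kr^2$. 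You should either restrict your assertion to $j=1$ or adopt this $\Lp{1}$-plus-elliptic-regularity route. Second, and less seriously, the paper establishes that $h$ and $h'$ are \emph{continuous} by writing them as $r$ times the distance from the continuously varying rescaled function $T(a,r)$ to a fixed set, which makes the sets $A_k$ closed outright. You instead rely on a fixed a.e.-defined $P_a$ and then appeal to inner regularity; this works, but you must at least note that $a\mapsto P_a$ is $\mathscr{L}^\vdim$ measurable (via $P_a(x) = u(a) + \weakD u(a)(x-a)$) so that $A_{k,N}$ is measurable, rather than asserting it Borel. Finally, the paper makes a preliminary reduction to $q\geq p$ using Lemma~\ref{lemma:higher_differentiability} to ensure the competitors $v$ in the definition of $h$ have enough integrability; it is worth flagging this reduction, even if in the end it is largely a bookkeeping convenience.
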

\begin{proof}
	In view of \ref{lemma:higher_differentiability} one may assume $q
	\geq p$. Let $\varepsilon = \varepsilon_{\ref{lemma:main_lemma}} (
	\vdim, \adim, p,p,q ).$ Suppose $F$, $U$, $j$, and $u$ satisfy the
	hypotheses with $\varepsilon$. Define the open set $V$ by
	\begin{gather*}
		V = \bigclassification{U}{x}{\text{${\textstyle\sum_{i=0}^j}
		\norm{\weakD^i u}{p}{x,r} < \infty$ for some $0 < r < \dist (
		x, \rel^\vdim \without U )$}}
	\end{gather*}
	and note $A \subset V$. Denote by $D$ the set of all $v \in
	\Sob{}{1}{q} (\oball{0}{1}, \rel^\codim )$ such that $L_F
	(v)=0$ and define
	\begin{gather*}
		W = \eqclassification{V \times \rel}{(a,r)}{0 < r < \dist (
		a, \rel^\vdim \without V )}
	\end{gather*}
	and the continuous map $T : W \to \Sob{}{1}{p} ( \oball{0}{1},
	\rel^\codim )$ by
	\begin{gather*}
		T (a,r) (x) = r^{-1} u ( a + r x ) \quad
		\text{whenever $(a,r) \in W$, $x \in \oball{0}{1}$}.
	\end{gather*}
	Since $D \neq \emptyset$ and
	\begin{gather*}
		h(a,r) = r \inf \big \{ {\textstyle\sum_{i=0}^j}
		\norm{\weakD^i ( T(a,r) - v )}{p}{0,1} \with v \in D \big \}
		\quad \text{for $(a,r) \in W$},
	\end{gather*}
	$h$ is continuous. Therefore $A$ is a Borel set. Similarly, denoting
	by $D'$ the set of all affine functions mapping $\rel^\vdim$ into
	$\rel^\codim$ one defines a continuous map $h' : W \to \rel$ by
	\begin{gather*}
		h' (a,r) = r \inf \{ \norm{T (a,r) - w}{1}{0,1} \with w \in
		D' \} \quad \text{for $(a,r) \in W$}.
	\end{gather*}
	By Re{\v s}etnyak \cite{Reshetnyak_diff} or
	\cite[4.5.9\,(26)\,(\printRoman{2})\,(\printRoman{3})]{MR41:1976} one
	notes
	\begin{gather*}
		\limsup_{\varrho \pluslim{0}} \varrho^{-1} h' ( a, \varrho ) <
		\infty \quad \text{for $\mathscr{L}^\vdim$ almost all $a \in
		U$}.
	\end{gather*}

	Define
	\begin{gather*}
		C_k = \classification{V}{x}{\dist (x, \rel^\vdim \without V )
		\geq 1/k}, \\
		A_k = \classification{C_k}{a}{\text{$h (a,r) \leq k r^2$ and
		$h' (a,r) \leq k r$ for $0 < r < 1/k$}}
	\end{gather*}
	for $k \in \nat$ and observe that the sets $A_k$ are closed and
	\begin{gather*}
		\mathscr{L}^\vdim ( A \without {\textstyle\bigcup} \{ A_k
		\with k \in \nat \} ) =0.
	\end{gather*}
	Finally, the conclusion is obtained by applying (for each $k \in
	\nat$) \ref{lemma:main_lemma} in conjunction with
	\ref{remark:diff_lemma} to rescaled versions of $u$, $A_k$ and a
	suitable number $\gamma$.
\end{proof}
\begin{remark}
	Instead of using Re{\v s}etnyak \cite{Reshetnyak_diff} or
	\cite[4.5.9\,(26)\,(\printRoman{2})\,(\printRoman{3})]{MR41:1976}, one
	can also use the functions $v$ occurring in the definition of $h(a,r)$
	in a way reminiscent of the familiar harmonic approximation procedure
	to deduce
	\begin{gather*}
		\limsup_{\varrho \pluslim{0}} \varrho^{-1} h'(a,\varrho) <
		\infty \quad \text{whenever $a \in A$}.
	\end{gather*}
	Therefore $u$ could have been required to be merely $j$ times weakly
	differentiable.
\end{remark}
\begin{corollary} \label{corollary:lebesgue_points_p}
	Suppose $\vdim, \adim \in \nat$, $\vdim < \adim$, $1 < p < \infty$,
	$U$ is an open subset of $\rel^\vdim$, $T \in \mathscr{D} ( U,
	\rel^\codim )$ and $A$ denotes the set of all $a \in U$ such that
	\begin{gather*}
		\limsup_{r \to 0+} r^{-1-\vdim/p} \dnorm{T}{p}{a,r} < \infty.
	\end{gather*}

	Then $A$ is a Borel set and for $\mathscr{L}^\vdim$ almost all $a \in
	A$ there exists a unique constant distribution $T_a \in \mathscr{D}' (
	U , \rel^\codim )$ such that
	\begin{gather*}
		\lim_{r \to 0+} r^{-1-\vdim/p} \dnorm{T-T_a}{p}{a,r} = 0.
	\end{gather*}
\end{corollary}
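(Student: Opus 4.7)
The plan is to reduce this Lebesgue point theorem for distributions to Theorem \ref{thm:criterion} applied with $j=1$, $q=p$, and the Dirichlet integrand $F(\sigma) = |\sigma|^2/2$, for which $L_F = \Lap$ and $D^2 F - \Upsilon = 0$, so the hypothesis on $F$ is met with $\varepsilon = 0$. The first step is to localize: for each $a_0 \in A$ the hypothesis supplies $r_0 > 0$ with $\dnorm{T}{p}{a_0,r_0} < \infty$; by Hahn--Banach, $T$ is then represented on $\oball{a_0}{r_0}$ by an $\Lp{p}$ vector field, and \ref{thm:global_w1p_estimate} yields $u \in \Sob{0}{1}{p}(\oball{a_0}{r_0}, \rel^\codim)$ with $\Lap u = T$ on $\oball{a_0}{r_0}$. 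Covering $A$ by countably many such balls, it suffices to work inside one.

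The heart of the argument is the estimate, valid for $a \in \oball{a_0}{r_0/2}$ and small $\varrho$,
\begin{gather*}
	h(a,\varrho) \leq \Gamma\, \varrho^{1-\vdim/p} \dnorm{T}{p}{a,\varrho},
\end{gather*}
with $\Gamma$ a positive, finite number depending only on $\vdim$ and $p$, and $h$ the quantity from Theorem \ref{thm:criterion}. To prove it, one solves the Dirichlet problem $\Lap w = T|_{\oball{a}{\varrho}}$, $w \in \Sob{0}{1}{p}(\oball{a}{\varrho}, \rel^\codim)$, via \ref{thm:global_w1p_estimate}, applies Poincar\'e's inequality to estimate $\norm{w}{p}{a,\varrho}$ by a constant multiple of $\varrho \norm{\weakD w}{p}{a,\varrho}$, and takes $v = u - w$ as the comparison in the definition of $h$. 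Consequently the hypothesis $\limsup_{\varrho \to 0+} \varrho^{-1-\vdim/p} \dnorm{T}{p}{a,\varrho} < \infty$ implies $\limsup_{\varrho \to 0+} \varrho^{-2} h(a,\varrho) < \infty$, so the set $A$ of this corollary is contained in the ``$A$'' of Theorem \ref{thm:criterion}.

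Theorem \ref{thm:criterion} then furnishes, for $\mathscr{L}^\vdim$ almost every $a \in A$, a polynomial $Q_a : \rel^\vdim \to \rel^\codim$ of degree at most $2$ with the stated decay of $\weakD^i(u - Q_a)$ in $\Lp{p}$. Defining $T_a$ to be the constant distribution on $U$ corresponding to $\Lap Q_a \in \rel^\codim$, one has $T - T_a = \Lap(u - Q_a)$ on $\oball{a_0}{r_0}$, whence the H\"older bound $|(T-T_a)(\theta)| \leq \norm{\weakD(u-Q_a)}{p}{a,\varrho} \norm{\weakD \theta}{p/(p-1)}{a,\varrho}$ for $\theta \in \mathscr{D}(\oball{a}{\varrho}, \rel^\codim)$ gives $\dnorm{T-T_a}{p}{a,\varrho} \leq \norm{\weakD(u-Q_a)}{p}{a,\varrho}$, hence the desired limit. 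Uniqueness of $T_a$ follows because a nonzero constant distribution $S$ associated to $\alpha \in \rel^\codim$ satisfies $\dnorm{S}{p}{a,\varrho} \geq c|\alpha|\varrho^{1+\vdim/p}$ for some $c = c(\vdim,p) > 0$, as seen by testing against $\theta(x) = \alpha \eta((x-a)/\varrho)$ for a fixed cutoff $\eta \in \mathscr{D}(\oball{0}{1})$ with $\int \eta > 0$. Finally, $A$ is Borel because $a \mapsto \dnorm{T}{p}{a,\varrho}$ is lower semicontinuous for each $\varrho > 0$, being the supremum over a countable dense family of translated test functions whose $\norm{\weakD \cdot}{p/(p-1)}{a,\varrho}$ is independent of $a$, and the limsup condition then expresses $A$ as a countable union of Borel sets via rational radii.

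The main obstacle is the comparison $h(a,\varrho) \leq \Gamma \varrho^{1-\vdim/p}\dnorm{T}{p}{a,\varrho}$, the only step requiring genuine PDE input; everything else is routine unwinding of definitions.
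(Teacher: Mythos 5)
Your proposal is correct and follows essentially the same route as the paper: localize so that $T$ has compact support, represent $T$ as $\Lap u$ for $u \in \Sob{0}{1}{p}$ via \ref{thm:global_w1p_estimate}, compare $u$ on each small ball against the solution of the homogeneous Dirichlet problem to get $h(a,\varrho) \leq \Gamma\varrho^{1-\vdim/p}\dnorm{T}{p}{a,\varrho}$ (the paper also records the reverse inequality, but only this direction is used), apply Theorem \ref{thm:criterion} with $j=1$, $q=p$ and the Dirichlet integrand, and set $T_a$ to the constant distribution associated to $\Lap Q_a(a)$. The only divergence is cosmetic: the paper establishes uniqueness by showing any admissible $T_a$ is the limit of the rescalings $r^{-\vdim}T(\theta\circ\mutau{r}{a})$, whereas you obtain the same conclusion from a direct lower bound $\dnorm{S}{p}{a,\varrho}\geq c|\alpha|\varrho^{1+\vdim/p}$ for a nonzero constant distribution $S$; both are valid, and your argument is the same one the paper uses in \ref{miniremark:constant_distributions} via the proof's last paragraph. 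Your lower-semicontinuity argument for Borel measurability matches the technique the paper uses in the appendix for the $q=1$ case (\ref{thm:distrib_diff}); in the paper's proof of this corollary the Borel property is inherited directly from \ref{thm:criterion}, which is a slightly tidier path to the same fact.
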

\begin{proof}
	The conclusion is local and for each $a \in A$ there exists $0 < r <
	\infty$ with $\dnorm{T}{p}{a,r} < \infty$, hence one may assume $\spt
	T$ to be compact, $U = \rel^\vdim$ and $\dnorm{T}{p}{0,R} < \infty$,
	$\spt T \subset \oball{0}{R}$ for some $0 < R < \infty$.

	For example using \ref{thm:global_w1p_estimate}, one obtains functions
	$u \in \Sob{0}{1}{p} ( \oball{0}{R}, \rel^\codim )$ and $v_{a,r} \in
	\mathscr{E} ( \oball{a}{r}, \rel^\codim )$ whenever $a \in
	\rel^\vdim$, $0 < r < \infty$ and $\oball{a}{r} \subset \oball{0}{R}$
	such that
	\begin{gather*}
		- \tint{\oball{0}{R}}{} \weakD u \bullet D \theta
		\ud\mathscr{L}^\vdim = T ( \theta ) \quad \text{for $\theta
		\in \mathscr{D} ( \oball{0}{R}, \rel^\codim )$}, \\
		u-v_{a,r} \in \Sob{0}{1}{p} ( \oball{a}{r} , \rel^\codim ),
		\quad \Lap v_{a,r} = 0.
	\end{gather*}
	By \ref{thm:global_w1p_estimate} and Poincar\'e's inequality
	\begin{gather*}
		\tsum{i=0}{1} r^{i-1} \norm{\weakD^i ( u-v_{a,r} )}{p}{a,r}
		\leq \Delta \dnorm{T}{p}{a,r}
	\end{gather*}
	for some positive, finite number $\Delta$ depending only on $\adim$
	and $p$, hence the set $A$ agrees with the set ``$A$'' defined in
	\ref{thm:criterion} with $q=p$, $F$ the Dirichlet integrand and $j=1$.
	Therefore, applying \ref{thm:criterion}, one may take $T_a \in
	\mathscr{D}' ( \oball{0}{R}, \rel^\codim )$ defined by $T_a ( \theta )
	= \int \theta (x) \bullet \Lap Q_a(a) \ud \mathscr{L}^\vdim x$ for
	$\theta \in \mathscr{D} ( \rel^\vdim, \rel^\codim )$.

	The uniqueness follows, since every $T_a$ admissible in the conclusion
	satisfies
	\begin{gather*}
		r^{-\vdim} T_a ( \theta \circ \mutau{r}{a})= T_a ( \theta ),
		\quad r^{-\vdim} T ( \theta \circ \mutau{r}{a} ) \to T_a (
		\theta )\quad \text{as $r \pluslim{0}$}.
	\end{gather*}
	whenever $\theta  \in \mathscr{D} ( \rel^\vdim, \rel^\codim )$.
\end{proof}
\begin{lemma} \label{lemma:l1_estimate}
	Suppose $\vdim, \adim \in \nat$, $\vdim < \adim$, $\Phi \in \bigodot^2
	\Hom ( \rel^\vdim, \rel^\codim )$, $0 < c \leq M < \infty$, $\| \Phi
	\| \leq M$, $\Phi$ is strongly elliptic with ellipticity bound $c$, $a
	\in \rel^\vdim$, $0 <  r < \infty$, $u \in \Sob{0}{1}{1} (
	\oball{a}{r}, \rel^\codim )$, $T \in \mathscr{D}' ( \oball{a}{r},
	\rel^\codim )$, and
	\begin{gather*}
		- {\textstyle\int_{\oball{a}{r}}} \left < D \theta (x) \odot
		\weakD u (x), \Phi \right > \ud \mathscr{L}^\vdim x = T (
		\theta ) \quad \text{for $\theta \in \mathscr{D} (
		\oball{a}{r}, \rel^\codim )$}.
	\end{gather*}

	Then
	\begin{gather*}
		\norm{u}{1}{a,r} \leq \Gamma r \dnorm{T}{1}{a,r}
	\end{gather*}
	where $\Gamma$ is a positive, finite number depending only on
	$\adim$, $c$, and $M$.
\end{lemma}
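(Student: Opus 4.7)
My plan is a duality argument: to bound $\norm{u}{1}{a,r}$ it suffices to control $\int_{\oball{a}{r}} u \bullet g \ud \mathscr{L}^\vdim$ for arbitrary $g \in \Lp{\infty}(\mathscr{L}^\vdim \restrict \oball{a}{r}, \rel^\codim)$ with $\norm{g}{\infty}{a,r} \leq 1$, and then to take the supremum over such $g$. The key input is the adjoint Dirichlet problem. Since $\Phi$ is a constant, symmetric, strongly elliptic bilinear form on $\Hom(\rel^\vdim, \rel^\codim)$ and $\oball{a}{r}$ has smooth boundary, classical $\Lp{p}$ theory for linear elliptic systems furnishes, for every $1 < p < \infty$, a unique $\theta \in \Sob{}{2}{p}(\oball{a}{r}, \rel^\codim) \cap \Sob{0}{1}{p}(\oball{a}{r}, \rel^\codim)$ with
\begin{gather*}
	-\int_{\oball{a}{r}} \left< D\varphi (x) \odot D\theta (x), \Phi\right> \ud \mathscr{L}^\vdim x = \int_{\oball{a}{r}} \varphi (x) \bullet g(x) \ud \mathscr{L}^\vdim x
\end{gather*}
for $\varphi \in \mathscr{D}(\oball{a}{r}, \rel^\codim)$. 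Choosing $p > \vdim$, rescaling to the unit ball and combining the $\Sob{}{2}{p}$-estimate with Sobolev embedding yields $\norm{D\theta}{\infty}{a,r} \leq \Gamma_1 r$ with $\Gamma_1$ depending only on $\adim$, $c$, and $M$.

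Next I would approximate $\theta$ by $\theta_n \in \mathscr{D}(\oball{a}{r}, \rel^\codim)$ via radial contraction followed by mollification: for $s_n \uparrow 1$, set $\theta^{s_n}(x) = \theta(a + (x-a)/s_n)$ inside $\oball{a}{s_n r}$ and $0$ otherwise (well defined and continuous across $\partial \oball{a}{s_n r}$, since $\theta$ vanishes continuously on $\partial \oball{a}{r}$ for $p > \vdim$), then mollify with $\epsilon_n < (1-s_n)r$. This produces $\norm{D\theta_n}{\infty}{a,r} \leq s_n^{-1} \norm{D\theta}{\infty}{a,r}$ together with $D\theta_n \to D\theta$ pointwise $\mathscr{L}^\vdim$ almost everywhere. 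Inserting $\theta_n$ into the equation for $u$ and invoking dominated convergence (the integrands are bounded by a constant multiple of $\norm{D\theta}{\infty}{a,r} |Du|$, with $Du \in \Lp{1}$) yields
\begin{gather*}
	T(\theta_n) = -\int_{\oball{a}{r}} \left< D\theta_n \odot Du, \Phi\right> \ud \mathscr{L}^\vdim \to -\int_{\oball{a}{r}} \left< D\theta \odot Du, \Phi\right> \ud \mathscr{L}^\vdim.
\end{gather*}
Using symmetry of $\Phi$ and a density argument extending the adjoint weak formulation from $\varphi \in \mathscr{D}$ to $\varphi = u \in \Sob{0}{1}{1}$ (legitimate because $D\theta \in \Lp{\infty}$ and $g \in \Lp{\infty}$), this last integral equals $\int_{\oball{a}{r}} u \bullet g \ud \mathscr{L}^\vdim$. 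On the other hand $|T(\theta_n)| \leq \dnorm{T}{1}{a,r} \norm{D\theta_n}{\infty}{a,r}$, whose $\limsup$ is at most $\Gamma_1 r \dnorm{T}{1}{a,r}$; taking the supremum over admissible $g$ then gives the stated inequality with $\Gamma = \Gamma_1$.

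The principal obstacle is the $\Lp{\infty}$ bound on $D\theta$ with constants depending only on $\adim$, $c$, and $M$. It rests on global $\Sob{}{2}{p}$-regularity for strongly elliptic constant-coefficient systems on a smooth bounded domain, combined with a correct scaling argument on the ball; Legendre--Hadamard ellipticity suffices for the underlying $\Lp{p}$ theory. Once this estimate is granted, the shrink-and-mollify approximation (exploiting $\theta|_{\partial \oball{a}{r}} = 0$) and the final passage to the limit are routine, with all constants tracked transparently.
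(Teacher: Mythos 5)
Your proof is correct. Since the paper itself only cites \cite[6.8]{snulmenn:decay.v2} rather than giving an argument, a line-by-line comparison is not possible; but your duality argument — solving the adjoint Dirichlet problem $L_\Phi\theta = g$ on the ball for $g \in \Lp{\infty}$ with $\norm{g}{\infty}{a,r} \le 1$, deriving $\norm{D\theta}{\infty}{a,r} \le \Gamma_1 r$ from $\Sob{}{2}{p}$ regularity (some fixed $p > \vdim$) and Sobolev embedding with the correct parabolic rescaling, and then pairing with $u$ after a shrink-and-mollify approximation of $\theta$ — is a standard and self-contained route to this estimate. The delicate points are all handled: the adjoint operator coincides with $L_\Phi$ because $\Phi \in \bigodot^2\Hom(\rel^\vdim,\rel^\codim)$ is symmetric; the radial contraction $\theta^{s_n}$ is Lipschitz across $\partial\oball{a}{s_n r}$ because $\theta \in \mathscr{C}^{1,\alpha}(\Clos{\oball{a}{r}})$ with zero boundary values, and mollification preserves the bound $\norm{D\theta_n}{\infty}{a,r} \le s_n^{-1}\norm{D\theta}{\infty}{a,r}$; the passage $T(\theta_n) \to -\int\left<D\theta\odot\weakD u,\Phi\right>\ud\mathscr{L}^\vdim$ is by dominated convergence using $\weakD u \in \Lp{1}$; and the substitution $\varphi = u$ into the adjoint weak formulation is legitimate by density of $\mathscr{D}$ in $\Sob{0}{1}{1}$ together with $D\theta, g \in \Lp{\infty}$. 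The Legendre--Hadamard condition suffices for the required Agmon--Douglis--Nirenberg $\Sob{}{2}{p}$ estimates for constant-coefficient systems on a ball, with constants depending only on $\adim$, $c$, $M$ (and the fixed choice of $p$). The only thing I would flag as worth stating explicitly is that one should fix $p$ once and for all (e.g. $p = 2\adim$) so that it does not appear as an additional parameter in $\Gamma$.
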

\begin{proof}
	See \cite[6.8]{snulmenn:decay.v2}.
\end{proof}
\begin{lemma} \label{lemma:elem}
	Suppose $\vdim, \adim \in \nat$, $\vdim < \adim$, $0 < c \leq M <
	\infty$,
	\begin{gather*}
		\text{$F : \Hom ( \rel^\vdim, \rel^\codim ) \to \rel$ is of
		class $\class{2}$}, \\
		\| D^2 F ( \sigma ) \| \leq M, \quad \left < ( \tau, \tau ),
		D^2 F ( \sigma ) \right > \geq c | \tau |^2 \qquad \text{for
		$\sigma, \tau \in \Hom ( \rel^\vdim, \rel^\codim)$},
	\end{gather*}
	$a \in \rel^\vdim$, $0 < r < \infty$, and $u,v \in \Sob{}{1}{2} (
	\oball{a}{r}, \rel^\codim )$ with
	\begin{gather*}
		u-v \in \Sob{0}{1}{2} ( \oball{a}{r}, \rel^\codim ).
	\end{gather*}

	Then for every affine function $P : \rel^\vdim \to \rel^\codim$
	\begin{gather*}
		\norm{\weakD (v-u)}{2}{a,r} \leq c^{-1} \big ( M
		\norm{\weakD (u-P)}{2}{a,r} + \dnorm{L_F(v)}{2}{a,r} \big )
	\end{gather*}
	where $L_F$ is defined as in \ref{miniremark:situation_F}.
\end{lemma}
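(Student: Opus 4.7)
The plan is to use the familiar convexity-via-linearisation argument with the test function $\theta = v - u \in \Sob{0}{1}{2}(\oball{a}{r}, \rel^\codim)$. By density of $\mathscr{D}(\oball{a}{r}, \rel^\codim)$ in $\Sob{0}{1}{2}(\oball{a}{r}, \rel^\codim)$, both the defining integral for $L_F(v)(\theta)$ and the estimate $|L_F(v)(\theta)| \leq \dnorm{L_F(v)}{2}{a,r} \norm{\weakD \theta}{2}{a,r}$ extend from test functions to $\theta$.

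First, I would apply the fundamental theorem of calculus to $DF$ along the segment from $\weakD u(x)$ to $\weakD v(x)$ to write
\begin{gather*}
L_F(v)(\theta) - L_F(u)(\theta) = -\tint{\oball{a}{r}}{} \left< \weakD \theta(x) \odot \weakD \theta(x), A(x) \right> \ud \mathscr{L}^\vdim x,
\end{gather*}
where $A(x) = \tint{0}{1} D^2 F(\weakD u(x) + t \weakD \theta(x)) \ud \mathscr{L}^1 t$. By the strong ellipticity hypothesis applied pointwise and integrated, the right-hand side is bounded above by $-c \norm{\weakD \theta}{2}{a,r}^2$.

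Second, I would handle $L_F(u)(\theta)$ by subtracting the trivially vanishing quantity $L_F(P)(\theta) = -\int \left< \weakD \theta, DF(DP)\right> \ud \mathscr{L}^\vdim = 0$, which holds because $DF(DP)$ is constant and $\theta$ has zero trace. Applying the fundamental theorem of calculus a second time along the segment from $DP$ to $\weakD u(x)$ gives
\begin{gather*}
L_F(u)(\theta) = -\tint{\oball{a}{r}}{} \left< \weakD \theta(x) \odot \weakD(u-P)(x), B(x) \right> \ud \mathscr{L}^\vdim x,
\end{gather*}
with $\|B(x)\| \leq M$, so by Cauchy--Schwarz this term is bounded in absolute value by $M \norm{\weakD (u-P)}{2}{a,r} \norm{\weakD \theta}{2}{a,r}$.

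Combining the two identities yields $c \norm{\weakD \theta}{2}{a,r}^2 \leq L_F(u)(\theta) - L_F(v)(\theta)$, and bounding each term on the right as above then dividing by $\norm{\weakD \theta}{2}{a,r}$ (the case $\weakD \theta = 0$ being trivial) produces the claimed inequality. The only genuine subtlety is verifying that $\theta = v - u$ is admissible as a test function for the distribution $L_F(v)$ in the $\dnorm{\cdot}{2}{a,r}$ sense; this is the density argument above, and is the only step requiring care beyond manipulation.
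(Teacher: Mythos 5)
Your proof is correct and takes essentially the same approach as the paper: test with $\theta = v-u$ (justified by density), subtract the vanishing $L_F(P)(\theta)$, linearize $DF$ via the fundamental theorem of calculus, and combine strong ellipticity with the boundedness $\| D^2 F \| \leq M$. The only cosmetic difference is that you apply the fundamental theorem of calculus twice (along $[\weakD u, \weakD v]$ and $[DP, \weakD u]$) whereas the paper uses a single segment $[DP, \weakD v]$ and then splits $\weakD(v-P) = \weakD(v-u) + \weakD(u-P)$, yielding one bilinear coefficient $A(x)$ instead of two; both routes give the same estimate.
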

\begin{proof}
	Compute for $\theta \in \mathscr{D} ( \oball{a}{r} , \rel^\codim )$
	\begin{gather*}
		\begin{aligned}
			L_F (v) ( \theta ) & = -
			{\textstyle\int_{\oball{a}{r}}} \left < D \theta
			(x), DF ( \weakD v (x) ) - DF ( DP (x) ) \right > \ud
			\mathscr{L}^\vdim x \\
			& = - {\textstyle\int_{\oball{a}{r}}} \left < D
			\theta (x) \odot \weakD ( v-P ) (x), A(x) \right > \ud
			\mathscr{L}^\vdim x
		\end{aligned} \\
		\text{where $A(x) = {\textstyle\int_0^1} D^2 F ( t \weakD v
		(x) + (1-t) DP(x) ) \ud \mathscr{L}^1 t$}.
	\end{gather*}
	This implies for $\theta \in \mathscr{D} ( \oball{a}{r}, \rel^\codim
	)$
	\begin{gather*}
		\begin{aligned}
			& {\textstyle\int_{\oball{a}{r}}} \left < D \theta
			(x) \odot \weakD (v-u) (x), A(x) \right > \ud
			\mathscr{L}^\vdim x \\
			& \qquad = - {\textstyle\int_{\oball{a}{r}}} \left <
			D \theta (x) \odot \weakD (u-P) (x), A (x) \right > \ud
			\mathscr{L}^\vdim x - L_F (v) ( \theta ).
		\end{aligned}
	\end{gather*}
	Letting $\theta$ approximate $v-u$ in $\Sob{}{1}{2} ( \oball{a}{r},
	\rel^\codim )$, one obtains
	\begin{gather*}
		c ( \norm{\weakD (v-u)}{2}{a,r} )^2
		\leq \big ( M \norm{\weakD (u-P)}{2}{a,r} + \dnorm{L_F
		(v)}{2}{a,r} \big ) \norm{\weakD (v-u)}{2}{a,r}.
		\qedhere
	\end{gather*}
\end{proof}
\begin{lemma} \label{lemma:ms}
	Suppose $\vdim, \adim \in \nat$, $\vdim < \adim$, $\varepsilon=1/2$ is
	related to $F$ as in \ref{miniremark:situation_F}, $\Lip D^2 F <
	\infty$, $a \in \rel^\vdim$, $0 < r < \infty$, and $u,v \in
	\Sob{}{1}{2} ( \oball{a}{r}, \rel^\codim )$ with $u-v \in
	\Sob{0}{1}{2} ( \oball{a}{r}, \rel^\codim )$.

	Then for every affine function $P : \rel^\vdim \to \rel^\codim$
	\begin{multline*}
		r^{-1-\vdim} \norm{v-u}{1}{a,r} \leq \Gamma r^{-\vdim} \big
		( \dnorm{L_F (v)-L_F (u)}{1}{a,r} \\
		+ \Lip ( D^2 F ) ( \norm{\weakD (u-P)}{2}{a,r} + \norm{\weakD
		( v-P )}{2}{a,r} )^2 \big )
	\end{multline*}
	where $\Gamma = \Gamma_{\ref{lemma:l1_estimate}} ( \adim, 1/2/, 3/2 )$.
\end{lemma}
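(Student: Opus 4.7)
The plan is to apply Lemma \ref{lemma:l1_estimate} to $v - u$ with a carefully chosen constant coefficient tensor $\Phi := D^2 F(DP) \in \bigodot^2 \Hom(\rel^\vdim, \rel^\codim)$, where $DP$ denotes the constant derivative of the affine function $P$. Since $\| D^2 F(\sigma) - \Upsilon \| \leq 1/2$ for all $\sigma$ and $\Upsilon(\tau,\tau) = |\tau|^2$, one obtains $\| \Phi \| \leq 3/2$ and that $\Phi$ is strongly elliptic with ellipticity bound $1/2$; this precisely matches the arguments $c = 1/2$, $M = 3/2$ appearing in the constant $\Gamma_{\ref{lemma:l1_estimate}}(\adim, 1/2, 3/2)$.

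To put $L_F(v) - L_F(u)$ into divergence form with respect to $\Phi$, I would apply the fundamental theorem of calculus to $t \mapsto DF(t \weakD v(x) + (1-t) \weakD u(x))$, obtaining
$$DF(\weakD v(x)) - DF(\weakD u(x)) = \left< \weakD(v-u)(x), A(x) \right>, \quad A(x) = {\textstyle\int_0^1} D^2 F(t \weakD v(x) + (1-t) \weakD u(x)) \ud\mathscr{L}^1 t.$$
Consequently, for every $\theta \in \mathscr{D}(\oball{a}{r}, \rel^\codim)$,
$$- \int_{\oball{a}{r}} \left< D\theta \odot \weakD(v-u), \Phi \right> \ud\mathscr{L}^\vdim = (L_F(v) - L_F(u))(\theta) + R(\theta),$$
where $R(\theta) := \int_{\oball{a}{r}} \left< D\theta \odot \weakD(v-u), A - \Phi \right> \ud\mathscr{L}^\vdim$. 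Since $v - u \in \Sob{0}{1}{2}(\oball{a}{r}, \rel^\codim) \subset \Sob{0}{1}{1}(\oball{a}{r}, \rel^\codim)$, Lemma \ref{lemma:l1_estimate} then yields
$$\norm{v-u}{1}{a,r} \leq \Gamma r \big( \dnorm{L_F(v) - L_F(u)}{1}{a,r} + \dnorm{R}{1}{a,r} \big).$$

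The remainder $R$ is controlled using the Lipschitz bound on $D^2F$: one estimates
$$\| A(x) - \Phi \| \leq \Lip(D^2 F) {\textstyle\int_0^1} | t \weakD(v-P)(x) + (1-t) \weakD(u-P)(x) | \ud\mathscr{L}^1 t \leq \Lip(D^2 F) \big( | \weakD(u-P)(x) | + | \weakD(v-P)(x) | \big).$$
For admissible $\theta$, i.e.\ with $\norm{D\theta}{\infty}{a,r} \leq 1$, the bound $|\weakD(v-u)| \leq |\weakD(u-P)| + |\weakD(v-P)|$ combined with Cauchy--Schwarz and the triangle inequality in $\Lp{2}$ gives
$$|R(\theta)| \leq \int_{\oball{a}{r}} |\weakD(v-u)| \, \| A - \Phi \| \ud\mathscr{L}^\vdim \leq \Lip(D^2F) \big( \norm{\weakD(u-P)}{2}{a,r} + \norm{\weakD(v-P)}{2}{a,r} \big)^2,$$
hence $\dnorm{R}{1}{a,r}$ is bounded by the same quantity, and combining with the previous display yields the conclusion.

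There are no substantial obstacles. The conceptual crux is the \emph{pre-linearisation} device, which reduces a nonlinear statement to a linear one with constant coefficients by representing the increment $DF(\weakD v) - DF(\weakD u)$ as an integrated linearisation $A$; the specific choice $\Phi = D^2 F(DP)$ is what ensures that the discrepancy $A - \Phi$ is controlled by the gradient defects $\weakD(u-P)$ and $\weakD(v-P)$ rather than by $\weakD u$ and $\weakD v$ themselves, which is essential for the intended applications in Section \ref{sec:main_theorem}.
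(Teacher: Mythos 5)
Your proof is correct and follows essentially the same route as the paper's. You use the fundamental theorem of calculus to write $DF(\weakD v)-DF(\weakD u)=\langle \weakD(v-u),A\rangle$, recast the resulting divergence-form equation with the constant coefficient $\Phi=D^2F(DP)$ plus a remainder, apply Lemma~\ref{lemma:l1_estimate} (with $c=1/2$, $M=3/2$ verified exactly as in the paper), and bound the remainder via $\Lip D^2F$ and Cauchy--Schwarz/Minkowski. The only cosmetic differences are that the paper absorbs $\Phi$ into its definition of $A$ and introduces the auxiliary distribution $S$, and that it retains the factor $1/2$ from $\int_0^1 t\,\ud t$ (which you harmlessly drop, still matching the stated constant).
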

\begin{proof}
	Let $\Lambda = \Lip D^2 F$, choose $\sigma \in \Hom ( \rel^\vdim,
	\rel^\codim )$ such that $DP(x) = \sigma$ for $x \in \rel^\vdim$, and
	define $T = L_F (v) - L_F (u)$, the $\mathscr{L}^\vdim \restrict
	\oball{a}{r}$ measurable function $A : \oball{a}{r} \to \bigodot^2
	\Hom ( \rel^\vdim, \rel^\codim )$ by
	\begin{gather*}
		A(x) = {\textstyle\int_0^1} D^2F ( t \weakD v (x) + ( 1-t )
		\weakD u (x) ) - D^2 F ( \sigma ) \ud \mathscr{L}^1 t 
	\end{gather*}
	whenever $x \in \oball{a}{r}$, and $S \in \mathscr{D}' (
	\oball{a}{r}, \rel^\codim )$ by
	\begin{gather*}
		S ( \theta ) = {\textstyle\int_{\oball{a}{r}}} \left < D
		\theta (x) \odot \weakD (v-u) (x), A (x) \right > \ud
		\mathscr{L}^\vdim x + T ( \theta )
	\end{gather*}
	whenever $\theta \in \mathscr{D} ( \oball{a}{r}, \rel^\codim
	)$. One computes
	\begin{gather*}
		\begin{aligned}
			& DF ( \weakD v (x) ) - DF ( \weakD u (x) ) \\
			& \qquad = \big < \weakD (v-u) (x),
			{\textstyle\int_0^1} DD F ( t \weakD v(x) + (1-t)
			\weakD u(x) ) \ud \mathscr{L}^1 t \big >
		\end{aligned}
	\end{gather*}
	for $\mathscr{L}^n$ almost all $x \in \oball{a}{r}$ and infers
	\begin{gather*}
		S ( \theta ) = - {\textstyle\int_{\oball{a}{r}}} \left < D
		\theta (x) \odot \weakD (v-u) (x), D^2 F ( \sigma ) \right >
		\ud \mathscr{L}^\vdim x
	\end{gather*}
	whenever $\theta \in \mathscr{D} ( \oball{a}{r}, \rel^\codim
	)$, hence by \ref{lemma:l1_estimate} with $\Phi$ replaced by $D^2 F (
	\sigma )$
	\begin{gather*}
		r^{-1-\vdim} \norm{v-u}{1}{a,r} \leq \Gamma r^{-\vdim}
		\dnorm{S}{1}{a,r}
	\end{gather*}
	It remains to estimate $\dnorm{S}{1}{a,r}$. By use of the definition of
	$S$ one estimates
	\begin{gather*}
		\begin{aligned}
			\| A(x) \| & \leq {\textstyle\int_0^1} \| D^2F ( t
			\weakD v (x) + (1-t) \weakD u(x) ) - D^2F ( t \sigma +
			(1-t) \sigma ) \| \ud \mathscr{L}^1 t \\
			& \leq \Lambda {\textstyle\int_0^1} t | \weakD
			(v-P) (x) | + (1-t) | \weakD (u-P) (x) | \ud
			\mathscr{L}^1 t \\
			& = \Lambda ( | \weakD (v-P) (x) | + | \weakD
			(u-P) (x) | ) / 2
		\end{aligned}
	\end{gather*}
	for $\mathscr{L}^\vdim$ almost all $x \in \oball{a}{r}$. Finally,
	\begin{gather*}
		\dnorm{S}{1}{a,r} \leq \dnorm{T}{1}{a,r} + \Lambda/2
		{\textstyle\int_{\oball{a}{r}}} ( | \weakD (u-P)
		(x) | + | \weakD (v-P) (x) | )^2 \ud \mathscr{L}^\vdim x.
		\qedhere
	\end{gather*}
\end{proof}
\begin{miniremark} \label{miniremark:constant_distributions}
	Whenever $\vdim, \adim \in \nat$, $\vdim < \adim$, $U$ is an open
	subset of $\rel^\vdim$, $a \in U$, and $T \in \mathscr{D}' ( U,
	\rel^\codim )$ there exists at most one constant distribution $T_a \in
	\mathscr{D}' ( U, \rel^\codim)$ such that
	\begin{gather*}
		\lim_{r \to 0+} r^{-\vdim-1} \dnorm{T-T_a}{1}{a,r} = 0,
	\end{gather*}
	see the last paragraph of the proof of
	\ref{corollary:lebesgue_points_p}.
\end{miniremark}
\begin{lemma} \label{lemma:arbi}
	Suppose $\vdim, \adim \in \nat$, $\vdim < \adim$.

	Then there exists a positive, finite number $\varepsilon$ with the
	following property.

	If $F$ is related to $\varepsilon$ as in \ref{miniremark:situation_F},
	$\Lip D^2 F < \infty$, $U$ is an open subset of $\rel^\vdim$, $u : U
	\to \rel^\codim$ is weakly differentiable, $A_1$ denotes the set of
	all $a \in U$ such that
	\begin{gather*}
		\limsup_{r \pluslim{0}} r^{-\vdim-1} \dnorm{L_F (u)}{1}{a,r} <
		\infty,
	\end{gather*}
	$A_2$ denotes the set of all $a \in U$ such that there exists a
	(unique, see \ref{miniremark:constant_distributions}) constant
	distribution $T_a \in \mathscr{D}' ( U, \rel^\codim )$ such that
	\begin{gather*}
		\lim_{r \pluslim{0}} r^{-\vdim-1} \dnorm{L_F(u)-T_a}{1}{a,r} =
		0,
	\end{gather*}
	$B_1$ denotes the set of all $b \in \dmn \weakD u$ such that
	\begin{gather*}
		\limsup_{r \pluslim{0}} r^{-\vdim-1}
		{\textstyle\int_{\oball{b}{r}}} | \weakD u (x) -
		\weakD u (b) |^2 \ud \mathscr{L}^\vdim x < \infty,
	\end{gather*}
	and $B_2$ denotes the set of all $b \in \dmn \weakD u$ such that
	\begin{gather*}
		\lim_{r \pluslim{0}} r^{-\vdim-1}
		{\textstyle\int_{\oball{b}{r}}} | \weakD u (x) - \weakD u (b)
		|^2 \ud \mathscr{L}^\vdim x = 0,
	\end{gather*}
	then the following two statements hold:
	\begin{enumerate}
		\item \label{item:arbi:1} For $\mathscr{L}^\vdim$ almost all
		$a \in A_1 \cap B_1$ there exists a polynomial function $Q_a :
		\rel^\vdim \to \rel^\codim$ of degree at most $2$ such that
		\begin{gather*}
			\lim_{r \pluslim{0}} r^{-2-\vdim} \norm{u -
			Q_a}{1}{a,r} = 0.
		\end{gather*}
		\item \label{item:arbi:2} If $a \in A_2 \cap B_2$ satisfies
		the conclusion of \eqref{item:arbi:1} with $Q_a$ then
		\begin{gather*}
			T_a ( \theta ) = {\textstyle\int_U} \theta (x) \bullet
			\left < D^2 Q_a (a), C_F ( DQ_a (a) ) \right > \ud
			\mathscr{L}^\vdim x
		\end{gather*}
		for $\theta \in \mathscr{D} (U,\rel^\codim)$ where $C_F$ is
		defined as in \ref{miniremark:situation_F}.
	\end{enumerate}
\end{lemma}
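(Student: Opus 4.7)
My plan for statement~(1) is to apply Theorem~\ref{thm:criterion} with $p = 1$, $j = 0$, and $q = 2$. For each $a \in A_1 \cap B_1$ and $r$ small enough so that $\oball{a}{r} \subset U$, I take $v_r \in \Sob{}{1}{2}(\oball{a}{r}, \rel^\codim)$ to be the minimizer of $w \mapsto \int F(\weakD w) \ud \mathscr{L}^\vdim$ subject to $w - u \in \Sob{0}{1}{2}(\oball{a}{r}, \rel^\codim)$, whose existence follows by the direct method from the strict convexity and coercivity provided by $\varepsilon < 1$; this $v_r$ satisfies $L_F(v_r) = 0$. Setting $P(x) = u(a) + Du(a)(x-a)$, Lemma~\ref{lemma:ms} gives
\begin{gather*}
r^{-1-\vdim} \norm{v_r - u}{1}{a,r} \leq \Gamma r^{-\vdim} \bigl( \dnorm{L_F(u)}{1}{a,r} + \Lip(D^2 F) \bigl( \norm{\weakD(u-P)}{2}{a,r} + \norm{\weakD(v_r - P)}{2}{a,r} \bigr)^2 \bigr).
\end{gather*}
By $a \in A_1$ the first summand is $O(r^{\vdim+1})$; by $a \in B_1$ together with Lemma~\ref{lemma:elem} (applied with $L_F(v_r) = 0$) both $\norm{\weakD(u-P)}{2}{a,r}$ and $\norm{\weakD(v_r-P)}{2}{a,r}$ are $O(r^{(\vdim+1)/2})$. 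Hence $\norm{v_r - u}{1}{a,r} \leq C r^{\vdim+2}$, so that $h(a,r) \leq r^{-\vdim} \norm{v_r - u}{1}{a,r} \leq C r^2$, placing $a$ into the set ``$A$'' of Theorem~\ref{thm:criterion}. Applying that theorem to $\mathscr{L}^\vdim$ almost every $a \in A_1 \cap B_1$ furnishes the required polynomial $Q_a$.

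For statement~(2), fix $a \in A_2 \cap B_2$ satisfying the conclusion of (1) with $Q_a$, and let $t_a \in \rel^\codim$ satisfy $T_a(\theta) = \int \theta \bullet t_a \ud \mathscr{L}^\vdim$. The conclusion of~(1) combined with the approximate differentiability of $u$ at $a$ coming from $B_2$ forces $Du(a) = DQ_a(a)$ (and $u(a) = Q_a(a)$) by uniqueness of the first order Taylor expansion. For each small $r$, let $v_r$ denote the unique minimizer of $w \mapsto \int ( F(\weakD w) - w \bullet t_a ) \ud \mathscr{L}^\vdim$ with $w - u \in \Sob{0}{1}{2}(\oball{a}{r}, \rel^\codim)$; it satisfies $L_F(v_r) = T_a$. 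Reapplying Lemma~\ref{lemma:ms} to this $v_r$ with the same $P$, using $\dnorm{L_F(u) - T_a}{1}{a,r} = o(r^{\vdim+1})$ from $A_2$, $\norm{\weakD(u-P)}{2}{a,r} = o(r^{(\vdim+1)/2})$ from $B_2$, Lemma~\ref{lemma:elem} with $L_F(v_r) = T_a$, and the elementary estimate $\dnorm{T_a}{2}{a,r} = O(r^{\vdim/2+1}) = o(r^{(\vdim+1)/2})$, yields the strengthened bound $\norm{v_r - u}{1}{a,r} = o(r^{\vdim+2})$. Combined with statement~(1) this gives $\norm{v_r - Q_a}{1}{a,r} = o(r^{\vdim+2})$.

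To finish, I invoke Lemma~\ref{lemma:higher_differentiability} to conclude that $v_r \in \Sob{}{2}{2}(\oball{a}{r/2}, \rel^\codim)$ with $\norm{\weakD^2 v_r}{2}{a,r/2} = O(r^{\vdim/2})$. The rescaling $\tilde v_r(y) = r^{-2}(v_r(a+ry) - P(a+ry))$ on $\oball{0}{1}$ then satisfies $\tilde v_r \to \tilde Q$ in $L^1(\oball{0}{1})$, where $\tilde Q(y) = \tfrac{1}{2} D^2 Q_a(a)(y,y)$, and the family $\{\tilde v_r\}$ is bounded uniformly in $\Sob{}{2}{2}(\oball{0}{1/2})$. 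The pointwise form of $L_F(v_r) = T_a$ from \ref{miniremark:situation_F} rescales to
\begin{gather*}
\langle D^2 \tilde v_r(y), C_F(r D\tilde v_r(y) + Du(a)) \rangle = t_a \quad \text{for $\mathscr{L}^\vdim$ almost every $y \in \oball{0}{1/2}$.}
\end{gather*}
Weak compactness extracts a subsequence with $\tilde v_r \rightharpoonup \tilde Q$ in $\Sob{}{2}{2}(\oball{0}{1/2})$; Rellich's theorem then provides strong convergence $D\tilde v_r \to D\tilde Q$ in $L^2$, and Lipschitz continuity of $C_F$ gives $C_F(r D\tilde v_r + Du(a)) \to C_F(Du(a))$ strongly in $L^2$. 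Pairing this strong convergence with the weak convergence $D^2 \tilde v_r \rightharpoonup D^2 Q_a(a)$ passes the rescaled equation to the limit and produces the constant identity $\langle D^2 Q_a(a), C_F(DQ_a(a)) \rangle = t_a$, which is precisely the formula for $T_a$ demanded by~(2). The principal difficulty is the calibration of Lemma~\ref{lemma:ms} in statement~(2): both the distributional residual and the $L^2$ gradient deviation must beat $r^{\vdim+1}$ after multiplication by $r^{-\vdim}$, and afterwards one must convert the nonlinear rescaled equation into an identity of constants through a weak--strong compactness argument.
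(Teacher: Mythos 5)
Your proposal is correct and follows essentially the same strategy as the paper's proof. For part~(1) you compare $u$ with minimizers $v_r$ of the $F$-energy on $\oball{a}{r}$ (so $L_F(v_r)=0$), estimate $\norm{v_r-u}{1}{a,r}$ via Lemma~\ref{lemma:ms} and Lemma~\ref{lemma:elem} using the affine $P$ with gradient $\weakD u(a)$, and conclude $h(a,r)=O(r^2)$, which feeds into Theorem~\ref{thm:criterion} with $p=1$, $j=0$, $q=2$ -- exactly the paper's route. For part~(2) you construct the adjusted comparison solution $v_r$ with $L_F(v_r)=T_a$, run the same estimate chain to get $\norm{v_r-Q_a}{1}{a,r}=o(r^{\vdim+2})$, control $\weakD^2 v_r$ via Lemma~\ref{lemma:higher_differentiability}, and rescale -- again as in the paper. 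The only genuine difference is the final limiting step: you pass to the limit in the pointwise Euler--Lagrange identity $\langle D^2\tilde v_r, C_F(rD\tilde v_r+Du(a))\rangle=t_a$ by pairing the weak $L^2$ limit of $D^2\tilde v_r$ against the strong $L^2$ limit of $C_F(rD\tilde v_r+Du(a))$, whereas the paper passes to the limit in the weak (test-function) formulation by linearizing $DF$ around $DQ_a$ and showing the remainder vanishes. These are minor variants of the same idea; both rely on the same $W^{2,2}$ a priori bound and Rellich compactness. The one small item left implicit in your sketch is the choice of $\varepsilon$ (it must be simultaneously admissible for Theorem~\ref{thm:criterion}, Lemma~\ref{lemma:higher_differentiability}, and the convexity/coercivity needed for the direct method and Lemma~\ref{lemma:elem}), but that is a routine bookkeeping step.
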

\begin{proof}
	Let
	\begin{gather*}
		\varepsilon = \min \{ 1/2, \varepsilon_{\ref{thm:criterion}} (
		\vdim, \adim, 1, 2),
		\varepsilon_{\ref{lemma:higher_differentiability}} ( \adim, 2,
		2 ) \}.
	\end{gather*}
	Suppose $F$ and $u$ satisfy the hypotheses with $\varepsilon$.
	Abbreviate $\Lambda = \Lip D^2 F$ and $T = L_F (u)$. Fix $a \in A_1
	\cap B_1$ and $0 < R < \infty$ such that $\cball{a}{R} \subset U$ and
	$u | \oball{a}{R} \in \Sob{}{1}{2} ( \oball{a}{R}, \rel^\codim )$.

	To prove part \eqref{item:arbi:1}, the criterion \ref{thm:criterion}
	will be verified with $q=2$, $j=0$. Using the direct method of the
	calculus of variation, see e.g. \cite[Theorems 4.5,\,6, Remark
	4.1]{MR1962933}, one constructs for $0 < r < R$ functions $v_r \in
	\Sob{}{1}{2} ( \oball{a}{r}, \rel^\codim)$ such that
	\begin{gather*}
		v_r - u \in \Sob{0}{1}{2} ( \oball{a}{r},
		\rel^\codim ), \quad L_F (v_r) = 0.
	\end{gather*}
	By \ref{lemma:ms} one estimates
	\begin{gather*}
		\begin{aligned}
			& r^{-1-\vdim} \norm{v_r - u}{1}{a,r} \\
			& \qquad \leq \Delta_1 r^{-\vdim} \big (
			\dnorm{T}{1}{a,r} + \Lambda ( \norm{\weakD (
			u-\weakD u (a))}{2}{a,r} + \norm{\weakD ( v_r - \weakD u (a))}{2}{a,r} )^2
			\big ).
		\end{aligned}
	\end{gather*}
	with $\Delta_1 = \Gamma_{\ref{lemma:ms}} ( \adim )$. By
	\ref{lemma:elem} with $c=1/2$, $M=2$ one infers
	\begin{gather*}
		\norm{\weakD ( v_r - u )}{2}{a,r} \leq 4
		\norm{\weakD ( u - \weakD u (a) )}{2}{a,r},
	\end{gather*}
	hence
	\begin{gather*}
		r^{-1-\vdim} \norm{v_r - u}{1}{a,r} \leq \Delta_1
		r^{-\vdim} \big ( \dnorm{T}{1}{a,r} + \Lambda ( 6 \norm{\weakD
		( u - \weakD u (a) )}{2}{a,r} )^2 \big ).
	\end{gather*}
	Since $a \in A_1 \cap B_1$, this implies
	\begin{gather*}
		\limsup_{r \pluslim{0}} r^{-2-\vdim} \norm{v_r
		- u}{1}{a,r} < \infty.
	\end{gather*}
	Therefore part \eqref{item:arbi:1} follows from
	\ref{thm:criterion}.

	To prove part \eqref{item:arbi:2}, assume now additionally that the
	assumptions of \eqref{item:arbi:2} are valid for $a$, i.e. $a \in A_2
	\cap B_2$ and $Q_a$ satisfies the conclusion of \eqref{item:arbi:1}.
	Choose $y \in \rel^\codim$ such that
	\begin{gather*}
		T_a ( \theta ) = {\textstyle\int_U} \theta (x) \bullet y \ud
		\mathscr{L}^\vdim x \quad \text{for $\theta \in \mathscr{D} (
		U, \rel^\codim )$}.
	\end{gather*}
	Using the direct method of the calculus of variation as before, one
	constructs for $0 < r < R$ functions $w_r \in \Sob{}{1}{2} (
	\oball{a}{r}, \rel^\codim )$ such that
	\begin{gather*}
		w_r - u \in \Sob{0}{1}{2} ( \oball{a}{r},
		\rel^\codim ), \\
		L_F ( w_r ) ( \theta ) = {\textstyle\int_{\oball{a}{r}}}
		\theta (x) \bullet y \ud \mathscr{L}^\vdim x \quad
		\text{whenever $\theta \in \mathscr{D} ( \oball{a}{r},
		\rel^\codim )$}.
	\end{gather*}
	By \ref{lemma:ms} one estimates
	\begin{multline*}
		r^{-1-\vdim} \norm{w_r - u}{1}{a,r} \leq \Delta_1 r^{- \vdim}
		\big ( \dnorm{T-T_a}{1}{a,r} \\
		+ \Lambda ( \norm{\weakD (u-\weakD u (a))}{2}{a,r} +
		\norm{\weakD ( w_r - \weakD u (a) )}{2}{a,r} )^2 \big ).
	\end{multline*}
	Since, by Poincar\'e's inequality,
	\begin{gather*}
		\big | {\textstyle\int_{\oball{a}{r}}} \theta (x)
		\bullet y \ud \mathscr{L}^\vdim x \big | \leq |y| \Delta_2
		r^{1+\vdim/2} \norm{D\theta}{2}{a,r}
	\end{gather*}
	where $\Delta_2$ is a positive, finite number depending only on
	$\adim$, one infers from \ref{lemma:elem}
	\begin{gather*}
		\norm{\weakD ( w_r - u )}{2}{a,r} \leq 4
		\norm{\weakD ( u - \weakD u (a) )}{2}{a,r} + 2 \Delta_2 |y|
		r^{1+\vdim/2},
	\end{gather*}
	hence
	\begin{gather*}
		\begin{aligned}
			& r^{-1-\vdim} \norm{w_r - u}{1}{a,r} \\
			& \qquad \leq \Delta_1 r^{-\vdim} \big (
			\dnorm{T-T_a}{1}{a,r} + \Lambda ( 6 \norm{\weakD ( u -
			\weakD u (a) )}{2}{a,r} + 2 \Delta_2 |y| r^{1+\vdim/2}
			)^2 \big).
		\end{aligned}
	\end{gather*}
	Since $a \in A_2 \cap B_2$, this implies
	\begin{gather*}
		\lim_{r \pluslim{0}} r^{-2-\vdim} \norm{w_r -
		u}{1}{a,r} = 0.
	\end{gather*}
	Therefore by the assumption on $Q_a$
	\begin{gather*}
		\lim_{r \pluslim{0}} r^{-2-\vdim} \norm{w_r -
		Q_a}{1}{a,r} = 0.
	\end{gather*}

	In order to estimate derivatives of $w_r - Q_a$, define $P :
	\rel^\vdim \to \rel^\codim$ by $P(x) = Q_a (a) + \left < x-a, DQ_a (a)
	\right >$ for $x \in \rel^\vdim$, $R=Q_a-P$, $S : \rel^\vdim \to
	\rel^\codim$ by $S(x) = \frac{1}{2} \left <(x,x), D^2Q_a(a) \right >$
	for $x \in \rel^\vdim$ and note $r^{-2} R \circ \taumu{a}{r} = S$ and
	\begin{gather*}
		r^{-2} ( w_r - P ) \circ \taumu{a}{r} |
		\oball{0}{1} \to S | \oball{0}{1} \quad \text{in
		$\Lp{1} ( \oball{0}{1}, \rel^\codim )$}
	\end{gather*}
	as $r \pluslim{0}$. By \ref{lemma:higher_differentiability}
	\begin{gather*}
		r^{-\vdim/2} \norm{\weakD^2 ( w_r - P )}{2}{a,r/2} \leq
		\Delta_3 ( r^{-2-\vdim} \norm{w_r - P}{1}{a,r} + |y| )
	\end{gather*}
	where $\Delta_3 = \max \{ 1, \unitmeasure{\vdim}^{1/2} \}
	\Gamma_{\ref{lemma:higher_differentiability}} ( \adim, 2)$, hence
	\begin{gather*}
		\limsup_{r \pluslim{0}} r^{-\vdim/2} \norm{\weakD^2 ( w_r
		- P )}{2}{a,r/2} < \infty.
	\end{gather*}
	By Rellich's embedding theorem
	\begin{gather*}
		r^{-2} ( w_r - P ) \circ \taumu{a}{r} | \oball{0}{1/2} \to S |
		\oball{0}{1/2} \quad \text{in $\Sob{}{1}{2} ( \oball{0}{1/2},
		\rel^\codim )$}, \\
		r^{-2} ( w_r - Q_a ) \circ \taumu{a}{r} | \oball{0}{1/2} \to 0
		\quad \text{in $\Sob{}{1}{2} ( \oball{0}{1/2}, \rel^\codim)$}
	\end{gather*}
	as $r \pluslim{0}$. This convergence implies
	\begin{gather*}
		\big | r^{-\vdim-1} {\textstyle\int_{\oball{a}{r/2}}} \leftB (
		D \theta ) \circ \mutau{r}{a} (x), DF ( \weakD w_r (x) ) - DF
		( DQ_a (x) ) \rightB \ud \mathscr{L}^\vdim x \big | \\
		\leq r^{-\vdim/2-1} ( \Lip DF ) \norm{D\theta}{2}{0,1}
		\norm{\weakD (w_r - Q_a)}{2}{a,r} \to 0 \quad \text{as $r
		\pluslim{0}$}
	\end{gather*}
	for $\theta \in \mathscr{D} ( \oball{0}{1/2}, \rel^\codim )$.
	Therefore, noting
	\begin{gather*}
		{\textstyle\int_{\oball{0}{1/2}}} \theta (x) \bullet y \ud
		\mathscr{L}^\vdim x = r^{-\vdim}
		{\textstyle\int_{\oball{a}{r/2}}} ( \theta \circ \mutau{r}{a})
		(x) \bullet y \ud \mathscr{L}^\vdim x \\
		= - r^{-\vdim-1} {\textstyle\int_{\oball{a}{r/2}}} \leftB ( D
		\theta ) \circ \mutau{r}{a} (x) , DF ( \weakD w_r (x) )
		\rightB \ud \mathscr{L}^\vdim x
	\end{gather*}
	for $\theta \in \mathscr{D} ( \oball{0}{1/2}, \rel^\codim )$ and
	\begin{gather*}
		\begin{aligned}
			& - r^{-\vdim-1} {\textstyle\int_{\oball{a}{r/2}}}
			\leftB ( D\theta ) \circ \mutau{r}{a} (x) , DF ( DQ_a
			(x) ) \rightB \ud \mathscr{L}^\vdim x \\
			& \qquad = r^{-\vdim}
			{\textstyle\int_{\oball{a}{r/2}}} ( \theta \circ
			\mutau{r}{a} ) ( x ) \bullet \left < D^2 Q_a (x) , C_F
			( DQ_a (x) ) \right > \ud \mathscr{L}^\vdim x \\
			& \qquad \quad \to {\textstyle\int_{\oball{0}{1/2}}}
			\theta (x) \bullet \left < D^2 Q_a (a), C_F ( DQ_a (a)
			) \right > \ud \mathscr{L}^\vdim x \quad \text{as $r
			\pluslim{0}$},
		\end{aligned}
	\end{gather*}
	for $\theta \in \mathscr{D} ( \oball{0}{1/2}, \rel^\codim )$, one
	infers
	\begin{gather*}
		y = \left < D^2 Q_a (a), C_F ( DQ_a (a) ) \right >,
	\end{gather*}
	as asserted.
\end{proof}
\begin{remark} \label{remark:arbi}
	Clearly, by Re{\v s}etnyak \cite{Reshetnyak_diff} or
	\cite[4.5.9\,(26)\,(\printRoman{2})\,(\printRoman{3})]{MR41:1976} for
	$\mathscr{L}^\vdim$ almost all $a \in A_1 \cap B_1$
	\begin{gather*}
		Q_a (a) = u (a), \quad DQ_a(a) = \weakD u (a).
	\end{gather*}
	Also by Calder\'on and Zygmund \cite[Theorem 9]{MR0136849} (see also
	\cite[3.6--8]{MR1014685}), there exists a sequence of functions $u_i :
	\rel^\vdim \to \rel^\codim$ of class $\class{2}$ such that
	\begin{gather*}
		\mathscr{L}^\vdim \left ( A_1 \cap B_1 \without
		\bigcup_{i=1}^\infty \left \{ a \with \text{$D^k u_i (a) = D^k
		Q_a (a)$ for $k \in \{ 0,1,2 \}$} \right \} \right ) = 0.
	\end{gather*}
\end{remark}
\begin{remark}
	In \ref{thm:distrib_diff} it will be shown $\mathscr{L}^\vdim ( A_1
	\without A_2 ) = 0$.
\end{remark}
\begin{lemma} \label{lemma:cutoff}
	Suppose $H$ is a Hilbert space with $\dim H = N < \infty$, $k,l \in
	\nat \cup \{ 0 \}$, $l \geq k$, $\Phi : H \to \rel$ is of class $l$, $a
	\in H$, $0 < \delta < \infty$, and
	\begin{gather*}
		s = \sup \{ \| D^k \Phi ( x ) - D^k \Phi ( a ) \| \with x \in
		\cball{a}{\delta} \}.
	\end{gather*}

	Then there exists $F : H \to \rel$ of class $l$ such that
	\begin{gather*}
		D^i F (x) = D^i \Phi (x) \quad \text{for $x \in
		\cball{a}{\delta/2}$, $i = 0, \ldots, k$}, \\
		\| D^k F ( x ) - D^k \Phi ( a ) \| \leq \Gamma s \quad \text{for
		$x \in H$}, \\
		\text{$F | H \without \cball{a}{\delta}$ is the restriction
		of a polynomial function of degree at most $k$}
	\end{gather*}
	where $\Gamma$ is a positive, finite number depending only on $N$ and
	$k$.
\end{lemma}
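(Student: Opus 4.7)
The strategy is a standard cutoff argument built around the Taylor polynomial. Let $P : H \to \rel$ denote the Taylor polynomial of $\Phi$ at $a$ of degree $k$, so that $D^k P \equiv D^k \Phi(a)$ and $D^i P(a) = D^i \Phi(a)$ for $i = 0, \ldots, k$. Then $\Phi - P$ is of class $\class{l}$, vanishes to order $k$ at $a$, and
\begin{gather*}
    \| D^k (\Phi - P)(x) \| = \| D^k \Phi(x) - D^k \Phi(a) \| \leq s \quad \text{for $x \in \cball{a}{\delta}$}.
\end{gather*}
Iteratively applying Taylor's theorem with integral remainder to $D^i \Phi$ at $a$, one obtains constants $c_i$ depending only on $k$ with
\begin{gather*}
    \| D^i (\Phi - P)(x) \| \leq c_i \, s \, | x - a |^{k-i} \quad \text{for $x \in \cball{a}{\delta}$ and $i = 0, \ldots, k$}.
\end{gather*}

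Next, choose a function $\chi : H \to \rel$ of class $\class{\infty}$ with $\chi = 1$ on $\cball{a}{\delta/2}$, $\chi = 0$ on $H \without \cball{a}{\delta}$, and $\| D^j \chi(x) \| \leq C_j \delta^{-j}$ for $j = 0, 1, \ldots, k$ with constants $C_j$ depending only on $N$ and $j$; such a $\chi$ is produced by mollifying the indicator function of $\cball{a}{3\delta/4}$ at scale proportional to $\delta$. Define
\begin{gather*}
    F(x) = P(x) + \chi(x)(\Phi(x) - P(x)) \quad \text{for $x \in H$}.
\end{gather*}
Then $F$ is of class $\class{l}$, equals $\Phi$ on $\cball{a}{\delta/2}$ (where $\chi \equiv 1$), and equals the polynomial $P$ of degree at most $k$ on $H \without \cball{a}{\delta}$ (where $\chi \equiv 0$). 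This immediately yields the first and third conclusions.

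It remains to verify the middle estimate. For $x \in H \without \cball{a}{\delta}$ one has $D^k F(x) = D^k P(x) = D^k \Phi(a)$, so the bound is trivial. For $x \in \cball{a}{\delta}$, Leibniz's rule applied to the product $\chi \cdot (\Phi - P)$ expresses $D^k F(x) - D^k \Phi(a)$ as a finite sum of symmetric contractions of $D^{k-j} \chi(x)$ with $D^j(\Phi - P)(x)$ for $j \in \{0, \ldots, k\}$, with combinatorial coefficients depending only on $k$. Combining the two families of bounds gives
\begin{gather*}
    \| D^{k-j} \chi(x) \| \cdot \| D^j (\Phi - P)(x) \| \leq C_{k-j} \delta^{-(k-j)} \cdot c_j s | x - a |^{k-j} \leq C_{k-j} c_j s
\end{gather*}
since $| x - a | \leq \delta$. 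Summing over $j$ yields $\| D^k F(x) - D^k \Phi(a) \| \leq \Gamma s$ with $\Gamma$ depending only on $N$ and $k$. There is no real obstacle in the proof; the only care required is the bookkeeping of the Leibniz expansion and a standard construction of the cutoff $\chi$, the hypothesis $l \geq k$ serving merely to make every derivative that appears in the construction well defined.
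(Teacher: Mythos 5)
Your proof is correct and follows essentially the same route as the paper: the paper likewise sets $P(x) = \sum_{i=0}^k \langle (x-a)^i/i!, D^i\Phi(a)\rangle$, multiplies $\Phi - P$ by a cutoff $\varphi(|x-a|/\delta)$ (equal to $1$ for $|x-a| \le \delta/2$ and $0$ for $|x-a| \ge \delta$), and estimates $\| D^k F(x) - D^k\Phi(a) \|$ via Taylor's formula. The only cosmetic difference is that the paper builds the cutoff as a radial function $\varphi(|x-a|/\delta)$ rather than by mollifying an indicator, which does not affect the argument.
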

\begin{proof}
	Choosing $\varphi \in \mathscr{E}^0 ( \rel )$ with $0 \leq \varphi (t)
	\leq 1$ for $t \in \rel$ and
	\begin{gather*}
		\{ t \with -\infty < t \leq 1/2 \} \subset \Int \{ t \with
		\varphi (t) = 1 \}, \quad \{ t \with 1 \leq t < \infty \}
		\subset \Int \{ t \with \varphi (t) = 0 \}
	\end{gather*}
	one defines $P : H \to \rel$, $F : H \to \rel$ by
	\begin{gather*}
		P (x) = {\textstyle\sum_{i=0}^k} \left < ( x-a )^i / i! , D^i
		\Phi ( a ) \right >, \\
		F (x) = P ( x ) + \varphi ( |x-a|/ \delta ) ( \Phi ( x ) - P (x)
		)
	\end{gather*}
	for $x \in H$ and readily estimates $\| D^k F ( x ) - D^k \Phi ( a ) \|$
	be means of Taylor's formula (cf. \cite[3.1.11]{MR41:1976}).
\end{proof}
\section{An approximate second order structure for certain integral varifolds}
\label{sec:main_theorem}
In this Section \ref{thm:main_theorem} which is Theorem \ref{ithm:c2} of the
Introduction is proven. In order to do this a general lemma is established
which states that the part of the varifold exhibiting a certain decay of its
tilt-excess can be covered with some accuracy by suitable rotated graphs of
Lipschitzian function having similar decay properties of their
``tilt-excess''. This is done by carefully combining the approximation by
$\qspace_Q ( \rel^\codim )$ valued functions of \cite[4.8]{snulmenn:decay.v2}
with more basic differentiability results in \cite{snulmenn.isoperimetric}.
The ``tilt-excess'' decay of the Lipschitzian functions is the nonintegral
differentiability condition used in Section \ref{sec:crit} to compensate for
the use of the weak norm $\dnorm{\cdot}{1}{a,s}$ in the estimates which seems
to be inavoidable, see \ref{remark:dnorm}.
\begin{lemma} \label{lemma:lipschitz_approximation}
	Suppose $\adim, Q \in \nat$, $0 < L < \infty$, $1 \leq M < \infty$,
	$0 < \delta_i \leq 1$ for $i \in \{1,2,3\}$, and $0 < \delta_4 \leq
	1/4$.
	
	Then there exists a positive, finite number $\varepsilon$ with the
	following property.
	
	If $\vdim \in \nat$, $\vdim < \adim$, $0 < s < \infty$, $S = \im
	\pp^\ast$,
	\begin{gather*}
		U = \eqclassification{\rel^\vdim \times
		\rel^\codim}{(x,y)}{\dist ((x,y), \cylinder{S}{0}{s}{s}) <
		2s},
	\end{gather*}
	$V \in \IVar_\vdim ( U )$, $\| \delta V \|$ is a Radon measure,
	\begin{gather*}
		( Q - 1 + \delta_1 ) \unitmeasure{\vdim} s^\vdim \leq \| V \| (
		\cylinder{S}{0}{s}{s} ) \leq ( Q + 1 - \delta_2 )
		\unitmeasure{\vdim} s^\vdim, \\
		\| V \| ( \cylinder{S}{0}{s}{s+\delta_4 s} \without
		\cylinder{S}{0}{s}{s-2\delta_4 s}) \leq ( 1 - \delta_3 )
		\unitmeasure{\vdim} s^\vdim, \\
		\| V \| ( U ) \leq M \unitmeasure{\vdim} s^\vdim,
	\end{gather*}
	$0 < \delta \leq \varepsilon$, $B$ denotes the set of all $z
	\in \cylinder{S}{0}{s}{s}$ with $\density^{\ast \vdim} ( \| V \|, z) >
	0$ such that
	\begin{gather*}
		\text{either} \quad
		\measureball{\| \delta V \|}{\cball{z}{t}} >
		\delta \, \| V \| ( \cball{z}{t} )^{1-1/\vdim}
		\quad \text{for some $0 < t < 2 s$}, \\
		\text{or} \quad {\textstyle\int_{\cball{z}{t} \times
		\grass{\adim}{\vdim}}} | \project{R} - \project{S} | \ud V
		(\xi,R) > \delta \, \measureball{\| V
		\|}{\cball{z}{t}} \quad \text{for some $0 < t <
		2 s$},
	\end{gather*}
	$A = \cylinder{S}{0}{s}{s} \without B$, $A (x) =
	\classification{A}{z}{\pp (z) = x}$ for $x \in \rel^\vdim$, $X_1$ is
	the set of all $x \in \rel^\vdim \cap \cball{0}{s}$ such that
	\begin{gather*}
		{\textstyle\sum_{z \in A(x)}} \density^\vdim ( \| V \|, z )
		= Q \quad \text{and} \quad \text{$\density^\vdim ( \| V \|,
		z ) \in \nat \cup \{0\}$ for $z \in A(x)$},
	\end{gather*}
	$X_2$ is the set of all $x \in \rel^\vdim \cap \cball{0}{s}$ such
	that
	\begin{gather*}
		{\textstyle\sum_{z \in A(x)}} \density^\vdim ( \| V \|, z )
		\leq Q - 1 \quad \text{and} \quad \text{$\density^\vdim
		( \| V \|, z ) \in \nat \cup \{ 0 \}$ for $z \in A(x)$},
	\end{gather*}
	$N = \rel^\vdim \cap \cball{0}{s} \without ( X_1 \cup X_2 )$, and $f :
	X_1 \to \qspace_Q ( \rel^{\codim} )$ is characterised by the
	requirement
	\begin{gather*}
		\density^\vdim ( \| V \|, z) = \density^0 ( \| f (x) \|, \qq
		(z) ) \quad \text{whenever $x \in X_1$ and $z \in A(x)$},
	\end{gather*}
	then the following seven statements hold:
	\begin{enumerate}
		\item \label{item:lipschitz_approximation:yz} $X_1$ and $X_2$
		are universally measurable, and $\mathscr{L}^\vdim (N) =0$.
		\item \label{item:lipschitz_approximation:ab} $A$ and $B$
		are Borel sets and
		\begin{gather*}
			\qq \lIm A \cap \spt \| V \| \rIm \subset
			\cball{0}{s-\delta_4s}.
		\end{gather*}
		\item \label{item:lipschitz_approximation:para} $\pp \lIm
		\classification{A}{z}{\density^\vdim ( \| V \|, z ) = Q} \rIm
		\subset X_1$.
		\item \label{item:lipschitz_approximation:lip} The function
		$f$ is Lipschitzian with $\Lip f \leq L$.
		\item \label{item:lipschitz_approximation:misc} For
		$\mathscr{L}^\vdim$ almost all $x \in X_1$ the following is
		true:
		\begin{enumerate}
			\item
			\label{item:item:lipschitz_approximation:misc:a} The
			function $f$ is approximately strongly affinely
			approximable at $x$.
			\item
			\label{item:item:lipschitz_approximation:misc:b}
			If $(x,y) \in \graph_Q f$ then
			\begin{gather*}
				\Tan^\vdim ( \| V \|, (x,y) ) = \Tan \big (
				\graph_Q \ap A f ( x ) , (x,y) \big) \in
				\grass{\adim}{\vdim}.
			\end{gather*}
		\end{enumerate}
		\item \label{item:lipschitz_approximation:estimate_b} If $a
		\in A$, $\density^\vdim ( \| V \|, a ) = Q$, $0 < t \leq
		s - | \pp(a) |$, $| \qq(a) | + \delta_4 t \leq s$, and
		\begin{align*}
			B_{a,t} & = \cylinder{S}{a}{t}{\delta_4
			t} \cap B, \\
			C_{a,t} & = \cball{\pp (a)}{t}
			\without ( X_1 \without \pp \lIm B_{a,t} \rIm ),
			\\
			D_{a,t} & = \cylinder{S}{a}{t}{\delta_4
			t} \cap \pp^{-1} \lIm C_{a,t} \rIm,
		\end{align*}
		then $B_{a,t}$ is a Borel set, $C_{a,t}$ and
		$D_{a,t}$ are universally measurable and
		\begin{gather*}
			\mathscr{L}^\vdim ( C_{a,t} ) + \| V \| (
			D_{a,t}) \leq
			\Gamma_{\eqref{item:lipschitz_approximation:estimate_b}}
			\, \| V \| ( B_{a,t} )
		\end{gather*}
		with
		$\Gamma_{\eqref{item:lipschitz_approximation:estimate_b}} =
		3 + 2Q + ( 12Q + 6 ) 5^\vdim$.
		\item \label{item:lipschitz_approximation:pde} If $a$,
		$t$, $C_{a,t}$, $D_{a,t}$ are as in
		\eqref{item:lipschitz_approximation:estimate_b}, $g :
		\rel^\vdim \to \rel^\codim$, $\Lip g < \infty$, $g | X_1 =
		\boldsymbol{\eta}_Q \circ f$, $\tau \in \Hom ( \rel^\vdim,
		\rel^\codim )$, $\theta \in \mathscr{D} ( \rel^\vdim,
		\rel^\codim)$, $\eta \in \mathscr{D}^0 ( \rel^\codim )$,
		\begin{gather*}
			\spt \theta \subset \oball{\pp(a)}{t}, \qquad
			0 \leq \eta (y) \leq 1 \quad \text{for $y \in
			\rel^\codim$}, \\
			\spt \eta \subset \oball{\qq (a)}{\delta_4 t},
			\quad \cball{\qq (a)}{\delta_4 t/2 } \subset
			\Int ( \classification{\rel^\codim}{y}{\eta (y) = 1 }
			),
		\end{gather*}
		and $\Psi^\S$ denotes the nonparametric integrand associated
		to the area integrand $\Psi$, then
		\begin{gather*}
			\big | Q \tint{}{} \big < D \theta (x), D\Psi^\S_0 ( D
			g (x)) \big > \ud \mathscr{L}^\vdim x - ( \delta V )
			( ( \eta \circ \qq) \cdot ( \qq^\ast \circ \theta
			\circ \pp ) ) \big | \\
			\begin{aligned}
				& \leq \gamma_1 Q \vdim^{1/2} \Lip g
				{\textstyle\int_{C_{a,t}}} | D \theta |
				\ud \mathscr{L}^\vdim \\
				& \phantom{\leq} \ + \gamma_2
				{\textstyle\int_{E_{a,t} \without
				C_{a,t}}} | D \theta (x) | | \ap A f (x)
				\aplus ( - \tau ) |^2 \ud \mathscr{L}^\vdim x
				\\
				& \phantom{\leq} \ + \vdim^{1/2}
				{\textstyle\int_{D_{a,t}}} | D ( ( \eta
				\circ \qq ) \cdot ( \qq^\ast \circ \theta
				\circ \pp ) ) | \ud \| V \|
			\end{aligned}
		\end{gather*}
		where
		\begin{gather*}
			\gamma_1 = \sup \| D^2 \Psi_0^\S \| \lIm
			\cball{0}{\vdim^{1/2}\Lip g} \rIm, \\
			\gamma_2 = \Lip \big ( D^2 \Psi_0^\S|
			\cball{0}{\vdim^{1/2}(L+2\| \tau \|) } \big ), \\
			E_{a,t} = \cball{\pp (a)}{t } \cap
			\classification{X_1}{x}{\density^0 ( \| f (x) \|, g
			(x)) \neq Q }.
		\end{gather*}
	\end{enumerate}
\end{lemma}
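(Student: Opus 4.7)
The approach is to use the author's previous Lipschitz approximation by $\qspace_Q(\rel^\codim)$-valued functions \cite[4.8]{snulmenn:decay.v2}, which delivers items \eqref{item:lipschitz_approximation:yz}--\eqref{item:lipschitz_approximation:lip} essentially by construction together with standard measurability considerations. Concretely, the universal measurability of $X_1, X_2$ and the identity $\mathscr{L}^\vdim(N)=0$ rest on rectifiability of $V$ and slicing theory; the Lipschitz bound $\Lip f\leq L$ is dictated by taking $\varepsilon$ sufficiently small. The Borel character of $A$ and $B$ comes from the upper semicontinuity of $t\mapsto\|\delta V\|(\cball{z}{t})$ and $t\mapsto V(\cball{z}{t}\times\grass{\adim}{\vdim})$. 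For the inclusion $\qq\lIm A\cap\spt\|V\|\rIm\subset\cball{0}{s-\delta_4 s}$, the argument is that a point $z\in A\cap\spt\|V\|$ with $|\qq(z)|>s-\delta_4 s$ would, by the tilt and first-variation control encoded in $z\in A$ combined with monotonicity at scale comparable to $\delta_4 s$, force more mass in the annular region $\cylinder{S}{0}{s}{s+\delta_4 s}\setminus\cylinder{S}{0}{s}{s-2\delta_4 s}$ than the hypothesis $(1-\delta_3)\unitmeasure{\vdim}s^\vdim$ permits.

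For item \eqref{item:lipschitz_approximation:misc}, the approximate strong affine approximability of $f$ at $\mathscr{L}^\vdim$ almost every point of $X_1$ follows by combining Almgren's differentiation theory for $\qspace_Q$-valued Lipschitzian functions with the differentiability results from \cite{snulmenn.isoperimetric}. For the tangent plane identity, $\graph_Q f$ is $(\mathscr{H}^\vdim,\vdim)$ rectifiable, and at any $(x,y)\in\graph_Q f$ at which $f$ is approximately strongly affinely approximable the tangent $\vdim$-plane of $\graph_Q f$ is precisely $\graph(\ap Af(x))$. Since $\|V\|$ almost all of $(x,y)$ lie on $\graph_Q f$ (the complement projects into a set that, together with $B$, has vanishing linear density thanks to the choice of $\delta$), the approximate tangent plane of $\|V\|$ at $(x,y)$ must coincide with this plane.

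For item \eqref{item:lipschitz_approximation:estimate_b}, the plan is to split $C_{a,t}\subset(X_2\cap\cball{\pp(a)}{t})\cup N\cup\pp\lIm B_{a,t}\rIm$ using \eqref{item:lipschitz_approximation:yz}. On $X_2\cap C_{a,t}$ a missing sheet of density forces a corresponding mass of $B$ nearby, which via the $A$-selection and a Besicovitch-type covering applied to the fibers produces the linear bound $\mathscr{L}^\vdim(X_2\cap C_{a,t})\lesssim\|V\|(B_{a,t})$. The $D_{a,t}$ estimate then follows by integrating the fiberwise bound $\sum_{z\in A(x)}\density^\vdim(\|V\|,z)\leq Q$ over $C_{a,t}$ and absorbing the factor $Q$ and the $5^\vdim$ covering constant into $\Gamma_{\eqref{item:lipschitz_approximation:estimate_b}}$.

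The principal obstacle will be item \eqref{item:lipschitz_approximation:pde}. The plan is to write
\begin{gather*}
(\delta V)\bigl((\eta\circ\qq)\cdot(\qq^\ast\circ\theta\circ\pp)\bigr)=\tint{}{}\bigl<\project{R},\,D\bigl((\eta\circ\qq)\cdot(\qq^\ast\circ\theta\circ\pp)\bigr)(z)\bigr>\ud V(z,R),
\end{gather*}
and split the integration domain. The part of $V$ lying over $D_{a,t}$ contributes the third error term directly by $|\project{R}|\leq\vdim^{1/2}$. On the complement, $V$ is concentrated on $\graph_Q f$ over $X_1\cap\cball{\pp(a)}{t}\setminus\pp\lIm B_{a,t}\rIm$ with tangent planes $\graph(\ap Af(x))$ by \eqref{item:item:lipschitz_approximation:misc:b}, so the parametric--nonparametric translation via the area integrand $\Psi^\S$ produces
\begin{gather*}
Q\tint{}{}\bigl<D\theta(x),\,D\Psi^\S_0(\ap Af(x))\bigr>\ud\mathscr{L}^\vdim x
\end{gather*}
summed over the $Q$ sheets. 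Comparing this with the asserted $Q\int\langle D\theta,D\Psi^\S_0(Dg)\rangle\,d\mathscr{L}^\vdim$ splits the domain into three pieces: on $X_1\setminus E_{a,t}$ the $Q$ sheets all coincide with $g$, so $\ap Af(x)=Dg(x)$ and the integrands agree; on $E_{a,t}\setminus C_{a,t}$, a second-order Taylor expansion of $D\Psi^\S_0$ at $\tau$ with remainder controlled by $\gamma_2=\Lip(D^2\Psi^\S_0|\cball{0}{\vdim^{1/2}(L+2\|\tau\|)})$ produces the quadratic error $|\ap Af(x)\aplus(-\tau)|^2$; and on $C_{a,t}$ the crude linear estimate using $\gamma_1=\sup\|D^2\Psi_0^\S\|\lIm\cball{0}{\vdim^{1/2}\Lip g}\rIm$ together with $|Dg|\leq\vdim^{1/2}\Lip g$ gives the first error term. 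The main technical difficulty lies in carefully reconciling the parametric/nonparametric formulations on the $Q$-sheeted portion and keeping track of the precise cutoff regions so that the three error terms separate cleanly, which is why the careful definition of $C_{a,t}$ in \eqref{item:lipschitz_approximation:estimate_b} is essential.
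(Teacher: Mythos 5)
Your proposal correctly identifies the key input — the $\qspace_Q(\rel^\codim)$-valued Lipschitz approximation from \cite[4.8]{snulmenn:decay.v2} — and your treatment of items (1)--(5) matches the paper, which simply reads them off from \cite[4.8]{snulmenn:decay.v2} after the change of variables $r,h,T\to s,s,S$. Where you diverge from the paper is in items (6) and (7): the paper does not reprove these from scratch but observes that they are also already contained in \cite[4.8\,(6)\,(7)\,(9)]{snulmenn:decay.v2}, once one verifies that the extra hypotheses ($a\in A$ and $\density^\vdim(\|V\|,a)=Q$) together with \cite[4.10]{snulmenn:decay.v2} yield $\graph_Q f\restriction\cball{\pp(a)}{t}\subset\cylinder{S}{a}{t}{\delta_4 t/2}$ and $\|V\|(\cylinder{S}{a}{t}{\delta_4 t})\geq(Q-1/4)\unitmeasure{\vdim}t^\vdim$, which are precisely the geometric conditions under which the estimates in \cite[4.8]{snulmenn:decay.v2} apply. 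You instead try to reconstruct those estimates, and while your sketches are in the right spirit (Besicovitch covering and fibrewise counting for the measure estimate; first-variation formula, parametric--nonparametric correspondence, and second-order Taylor expansion of $D\Psi_0^\S$ around $\tau$ for the PDE estimate), this introduces substantial technical debt — you yourself note that reconciling the $Q$-sheeted parametric formulation with the graph integrand and obtaining the three error terms to separate cleanly is delicate, and your outline does not account for how the explicit constant $\Gamma_{(6)}=3+2Q+(12Q+6)5^\vdim$ arises nor for the role of the localizing conditions on $a$ that make the cylinder cut cleanly through $\graph_Q f$. In short, the approach is compatible with the paper's but trades a one-line citation for an unfinished reconstruction; the missing piece is recognizing that (6)--(7) are themselves cataloged in \cite[4.8]{snulmenn:decay.v2} and that \cite[4.10]{snulmenn:decay.v2} is what converts the stated hypotheses into the form that reference requires.
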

\begin{proof}
	This follows from \cite[4.8,\,10]{snulmenn:decay.v2}; in fact the
	statements
	\eqref{item:lipschitz_approximation:yz}--\eqref{item:lipschitz_approximation:misc}
	are those in \cite[4.8]{snulmenn:decay.v2} with $r$, $h$, $T$ replaced
	by $s$, $s$, $S$ and \cite[4.10]{snulmenn:decay.v2} shows that the
	additional conditions $a \in A$ and $\density^\vdim ( \| V, \|, a ) =
	Q$ in
	\eqref{item:lipschitz_approximation:estimate_b}\,\eqref{item:lipschitz_approximation:pde}
	can be arranged to imply
	\begin{gather*}
		\graph_Q f | \cball{\pp (a)}{t} \subset
		\cylinder{S}{a}{t}{\delta_4 t/2}, \\
		\| V \| ( \cylinder{S}{a}{t}{\delta_4t} ) \geq ( Q
		- 1/4 ) \unitmeasure{\vdim} t^\vdim,
	\end{gather*}
	hence 
	\eqref{item:lipschitz_approximation:estimate_b}\,\eqref{item:lipschitz_approximation:pde}
	are consequences of \cite[4.8\,(6)\,(7)\,(9)]{snulmenn:decay.v2}.
\end{proof}
\begin{miniremark} \label{miniremark:situation}
	The following situation will be studied: $\vdim, \adim \in \nat$,
	$\vdim < \adim$, $1 \leq p \leq \infty$, $U$ is an open subset of
	$\rel^\adim$, $V \in \Var_\vdim ( U)$, $\| \delta V \|$ is a Radon
	measure and, if $p > 1$,
	\begin{gather*}
		( \delta V ) ( g ) = - {\textstyle\int} g (z) \bullet
		\mathbf{h} (V;z) \ud \| V \| (z) \quad \text{whenever $g \in
		\mathscr{D} ( U, \rel^\adim )$}, \\
		\mathbf{h} (V;\cdot) \in \Lp{p} ( \| V \| \restrict K,
		\rel^\adim ) \quad \text{whenever $K$ is a compact subset of
		$U$}.
	\end{gather*}

	If $p < \infty$ then the measure $\psi$ is defined by
	\begin{gather*}
		\psi = \| \delta V \| \quad \text{if $p = 1$}, \qquad \psi =
		| \mathbf{h} ( V ; \cdot ) |^p \| V \| \quad \text{if $p >
		1$}.
	\end{gather*}
\end{miniremark}
\begin{lemma} \label{lemma:great_approx}
	Suppose $\vdim, \adim \in \nat$, $\vdim < \adim$, $1 \leq p \leq
	\vdim$, $1 \leq q < \infty$, $0 < \alpha \leq 1$, $\alpha q ( \vdim -
	p ) \leq \vdim p$, $0 < L < \infty$, $V \in \IVar_\vdim ( U )$, $\psi$
	is related to $p$ and $V$ as in \ref{miniremark:situation}, and $P$ is
	the set of all $a \in U$ such that $\Tan^\vdim ( \| V \|, a ) \in
	\grass{\adim}{\vdim}$ and
	\begin{gather*}
		\limsup_{s \to 0+} s^{-\alpha-\vdim/q} \big (
		\tint{\cball{a}{s} \times \grass{\adim}{\vdim}}{} |
		\project{S} - \project{\Tan^\vdim ( \| V \|, a )} |^q \ud V
		(z,S) \big )^{1/q} < \infty.
	\end{gather*}

	Then there exists a countable, disjointed family $H$ of $\| V \|$
	measurable subsets of $P$ such that $\| V \| ( P \without \bigcup H )
	= 0$ and for each $Z \in H$ there exists a nonempty open subset $O$ of
	$\orthproj{\adim}{\vdim}$ such that for each $\pi_1 \in O$ there exist
	\begin{gather*}
		g : \rel^\vdim \to \rel^\codim, \quad G : \rel^\vdim
		\to \rel^\adim, \quad K \subset \rel^\vdim, \quad Q
		\in \nat, \\
		\pi_2 \in \orthproj{\adim}{\codim} , \quad T \in \mathscr{D}'
		( \rel^\vdim, \rel^\codim )
	\end{gather*}
	with the following six properties:
	\begin{enumerate}
		\item \label{item:great_approx:def} $\pi_2 \circ \pi_1^\ast =
		0$, $G = \pi_1^\ast + \pi_2^\ast \circ g$, and $G \lIm K \rIm =
		Z$.
		\item \label{item:great_approx:lip} $\Lip g \leq L$.
		\item \label{item:great_approx:K} $K$ is an
		$\mathscr{L}^\vdim$ measurable subset of $\dmn Dg$.
		\item \label{item:great_approx:defT} $\int \big <
		D\theta(x), D\Psi_0^\S(Dg(x)) \big > \ud \mathscr{L}^\vdim x =
		T(\theta)$ for $\theta \in \mathscr{D} ( \rel^\vdim,
		\rel^\codim )$ where $\Psi$ denotes the area integrand.
		\item \label{item:great_approx:pde} Whenever $x \in K$ there
		holds with $z = G (x)$ and $R = \Tan^\vdim ( \| V \|, z )$
		\begin{gather*}
			\density^\vdim ( \| V \|, z ) = Q, \quad \im DG
			(x) = R, \\
			\begin{aligned}
				& \limsup_{s \pluslim{0}} s^{-\beta-\vdim/r}
				\big ( \tint{\cball{x}{s}}{} | Dg (\zeta) -
				Dg(x) |^r \ud \mathscr{L}^\vdim \zeta \big
				)^{1/r} \\
				& \qquad \leq 2 \vdim^{1/2} \limsup_{s
				\pluslim{0}} s^{-\beta-\vdim/r} \big(
				\tint{\cball{z}{s} \times
				\grass{\adim}{\vdim}}{} | \project{S} -
				\project{R} |^r \ud V ( \xi, S ) \big )^{1/r}
			\end{aligned}
		\end{gather*}
		whenever $0 < \beta \leq 1$, $1 \leq r < \infty$ and $\beta r
		\leq \alpha q$.
		\item \label{item:great_approx:diff} Whenever $x \in K$ there
		holds
		\begin{gather*}
			\lim_{s \pluslim{0}} s^{-\vdim-1}
			\dnorm{T-T_x}{1}{x,s} = 0
		\end{gather*}
		where $T_x \in \mathscr{D}' ( \rel^\vdim, \rel^\codim
		)$ is defined by
		\begin{gather*}
			T_x ( \theta ) = - {\textstyle\int} \Psi_0^\S ( Dg(x))
			\mathbf{h} (V;G(x) ) \bullet ( \pi_2^\ast \circ
			\theta ) (\zeta) \ud \mathscr{L}^\vdim \zeta
		\end{gather*}
		whenever $\theta \in \mathscr{D} ( \rel^\vdim,
		\rel^\codim )$.
	\end{enumerate}
\end{lemma}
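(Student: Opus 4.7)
The plan is to combine the Lipschitz approximation of Lemma \ref{lemma:lipschitz_approximation} with the tilt-excess hypothesis defining $P$ via a measurable decomposition, and then to extract the distributional estimate \eqref{item:great_approx:diff} from the PDE-type comparison \eqref{item:lipschitz_approximation:pde}. First I would decompose $P$ countably into Borel subsets on which the $\limsup$ in the definition of $P$ is bounded by a constant uniformly for scales $s \leq s_0$, $\density^\vdim(\|V\|, \cdot)$ is bounded, the local $\Lp{p}$ averages of $|\mathbf{h}(V;\cdot)|$ are bounded, and $\Tan^\vdim(\|V\|, \cdot)$ differs by at most a fixed small angle from some plane $T_0 \in \grass{\adim}{\vdim}$; this follows from Lusin's theorem together with the measurability of the tangent plane map. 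Further subdividing into pieces of sufficiently small diameter would produce the family $H$; for each piece $Z$ the set $O$ would be a neighbourhood of $T_0$ in $\orthproj{\adim}{\vdim}$ on which the quantitative closeness makes the hypotheses of Lemma \ref{lemma:lipschitz_approximation} applicable to cylinders covering $Z$.

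For fixed $Z$ and $\pi_1 \in O$, choosing $\pi_2 \in \orthproj{\adim}{\codim}$ with $\pi_2 \circ \pi_1^\ast = 0$, I would apply Lemma \ref{lemma:lipschitz_approximation} on sufficiently small cylinders parallel to $\im \pi_1^\ast$ with $\delta$ so small that $Z \subset A$, and patch the resulting $\qspace_Q(\rel^\codim)$-valued Lipschitzian approximations over a Vitali covering (further subdividing $Z$ to fix $Q$). Then taking $g : \rel^\vdim \to \rel^\codim$ to be a Lipschitz extension of $\boldsymbol{\eta}_Q \circ f$ with $\Lip g \leq L$, I would set $K = \pp\lIm Z\rIm \cap X_1$, $G = \pi_1^\ast + \pi_2^\ast \circ g$, and define $T$ by \eqref{item:great_approx:defT}. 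Properties \eqref{item:great_approx:def}--\eqref{item:great_approx:defT} are then immediate. Property \eqref{item:great_approx:pde} would follow from Lemma \ref{lemma:lipschitz_approximation}\eqref{item:lipschitz_approximation:misc}, which identifies $\Tan^\vdim(\|V\|, G(x))$ with $\graph Dg(x)$ for $x \in K$, combined with the elementary bilipschitz comparison between orthogonal projections onto graphs of small linear maps, which converts the tilt decay of $V$ at $G(x)$ into the analogous $\Lp{r}$ decay of $Dg$ at $x$.

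The main obstacle is property \eqref{item:great_approx:diff}. Fixing $x \in K$ and setting $a = G(x)$, for each small $s$ I would apply \eqref{item:lipschitz_approximation:pde} with $\tau = Dg(x)$, $t = s$, any $\theta \in \mathscr{D}(\oball{x}{s}, \rel^\codim)$ with $\|D\theta\|_\infty \leq 1$, and $\eta$ a standard radial cutoff along $\im \pi_2^\ast$ at $\qq(a)$. The left-hand side then equals $QT(\theta) - (\delta V)((\eta \circ \qq)\cdot(\qq^\ast \circ \theta \circ \pp))$; by the mean-curvature representation of $\delta V$, the identity $\density^\vdim(\|V\|, G(\xi)) = Q$ on the $\graph_Q f$ piece of $V$, and the area formula on that graph, the second term would equal $-Q\int \mathbf{h}(V;G(\xi)) \bullet \pi_2^\ast\theta(\xi)\,\Psi^\S_0(Dg(\xi))\,d\mathscr{L}^\vdim \xi$ up to an error concentrated on $\pp\lIm B_{a,s}\rIm$. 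Lebesgue differentiation applied to the locally $\Lp{1}$ function $\xi \mapsto \Psi^\S_0(Dg(\xi))\mathbf{h}(V;G(\xi))$ (integrable by the $\Lp{p}$-hypothesis on $\mathbf{h}$ combined with the change of variables) would reduce this to $QT_x(\theta) + o(s^{\vdim+1})$ for $\mathscr{L}^\vdim$-almost every $x$. The three error terms on the right of \eqref{item:lipschitz_approximation:pde} would be controlled respectively by (i) $\mathscr{L}^\vdim(C_{a,s}) = o(s^{\vdim+1})$ through \eqref{item:lipschitz_approximation:estimate_b} and the inclusion $K \subset A$, (ii) $\int_{\cball{x}{s}}|Dg - Dg(x)|^2\,d\mathscr{L}^\vdim = o(s^{\vdim+1})$ via \eqref{item:great_approx:pde} with $r = 2$, $\beta = 1$ --- permitted precisely by the exponent constraint $\alpha q(\vdim - p) \leq \vdim p$, which provides the H\"older balance between $\Lp{q}$ tilt-excess and $\Lp{p}$ mean curvature --- and (iii) a boundary term bounded by $\|V\|(B_{a,s})$. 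Combining would yield $s^{-\vdim-1}\dnorm{T - T_x}{1}{x,s} \to 0$. The principal delicate points will be the change of variables from $\|V\|$ on $\graph_Q f$ to $\mathscr{L}^\vdim$ on the base in the Lebesgue differentiation step, and the tight use of the exponent constraint to dispose of the mean-curvature error uniformly across all admissible test functions.
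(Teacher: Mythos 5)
Your high-level architecture (apply the Lipschitz approximation of \ref{lemma:lipschitz_approximation}, take $g$ as an extension of $\boldsymbol{\eta}_Q \circ f$, use the resulting structural information together with \eqref{item:lipschitz_approximation:pde} to derive the distributional statement) matches the paper, but there are genuine gaps in the way you propose to close the estimate \eqref{item:great_approx:diff}.

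The decisive error is in your step (ii). You want to show $\int_{\cball{x}{s}}|Dg-Dg(x)|^2\ud\mathscr{L}^\vdim = o(s^{\vdim+1})$ by applying conclusion \eqref{item:great_approx:pde} with $r=2$, $\beta=1$, and you assert that this is ``permitted precisely by the exponent constraint $\alpha q(\vdim-p)\leq\vdim p$.'' That is not the case. Conclusion \eqref{item:great_approx:pde} requires $\beta r\leq\alpha q$. In the intended application (Theorem \ref{thm:main_theorem}) one has $p=1$, $q=2$, $\alpha=1/2$, so $\alpha q=1$ and $(r,\beta)=(2,1)$ is simply not admissible; the best you may take is $\beta=1/2$, which gives only $\limsup_{s\to 0+}s^{-1-\vdim}\int_{\cball{x}{s}}|Dg-Dg(x)|^2<\infty$, a bounded quantity rather than one tending to zero. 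The paper resolves this not through the exponent hypothesis at all, but by first reducing to the set $P'$ where Brakke's half-power tilt decay holds with limit equal to zero for $\|V\|$ almost all points, and then transferring this $o$-decay to $Dg$ via the tilt comparison estimate. Your proposal never invokes $P'$ or Brakke's decay, and without it the error term in \eqref{item:lipschitz_approximation:pde} corresponding to $E_{a,t}\setminus C_{a,t}$ does not vanish. Relatedly, the constraint $\alpha q(\vdim-p)\leq\vdim p$ is used elsewhere: it controls the density exponent in the definition of the good sets $X_i$ so that the bad set where the first variation is large is $\|V\|$-negligible, a step your write-up does not explicitly carry out but which your description of the decomposition silently absorbs.

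Two further points. First, you invoke ``the mean-curvature representation of $\delta V$,'' but when $p=1$ (the case actually used in \ref{thm:main_theorem}) the first variation may have a singular part $\mu$ with $\mu\perp\|V\|$, and $\delta V$ is not fully represented by $\mathbf{h}(V;\cdot)\|V\|$. The paper handles this by restricting to points with $\density^\vdim(\mu,z)=0$ (built into the exceptional set $N$), so that the singular part dies in the blowup; your argument needs the analogous reduction. Second, you derive \eqref{item:great_approx:diff} only for $\mathscr{L}^\vdim$ almost every $x\in K$, whereas the lemma asserts it for every $x\in K$; the paper achieves the unconditional statement by building the Lebesgue-point conditions for $\mathbf{h}$ with respect to $\|V\|$ directly into the definition of the piece $Z$, then taking $K=\pp\lIm Z\rIm$, rather than doing the differentiation downstairs a posteriori. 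This last point is fixable by shrinking $K$, but it requires care to keep $G\lIm K\rIm=Z$ intact.
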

\begin{proof}
	First, observe that if some $\| V \|$ measurable set $Z$ has the
	properties listed in the conclusion so does every $\| V \|$ measurable
	subset of $Z$. Therefore, in order to prove the assertion, it is
	enough to show that for $\| V \|$ almost all $a \in P$ there exists a
	$\| V \|$ measurable set $Z$ having the stated properties and
	additionally satisfies $\density^{\ast \vdim} ( \| V \| \restrict Z,
	a) > 0$; in fact one can then take a maximal, disjointed family $H$ of
	such $Z$ (hence $\| V \| (Z) > 0$) and note $H$ is countable and
	$\density^\vdim ( \| V \| \restrict \bigcup H, a ) = 0$ for
	$\mathscr{H}^\vdim$ almost all $a \in U \without \bigcup H$ by
	\cite[2.10.19\,(4)]{MR41:1976} so that $\| V \| ( P \without \bigcup
	H) > 0$ would contradict the maximality of $H$.

	Define $P'$ to be the set of all $z \in U$ such that $\Tan^\vdim ( \|
	V \|, z ) \in \grass{\adim}{\vdim}$ and
	\begin{gather*}
		\lim_{t \to 0+} t^{-1/2-\vdim/2} \big (
		\tint{\cball{z}{t} \times \grass{\adim}{\vdim}}{} |
		\project{S} - \project{\Tan^\vdim ( \| V \|, z )} |^2 \ud V
		(\xi,S) \big )^{1/2} = 0.
	\end{gather*}
	By Brakke \cite[5.7,\,5]{MR485012} or \cite[8.6]{snulmenn:decay.v2}
	there holds $\| V \| ( U \without P') = 0$. Therefore one may assume
	$\alpha q \geq 1$  possibly replacing $\alpha$, $q$ by $1/2$, $2$ if
	$\alpha q < 1$. Assume further $L \leq 1/8$ and suppose $Q \in \nat$.
	The remaining assertion will be shown to hold for $\| V \|$ almost all
	$a \in P$ with $\density^\vdim ( \| V \|, a ) = Q$. For this purpose
	define
	\begin{gather*}
		\delta_1 = \delta_2 = \delta_3 = 1/2, \quad \delta_4 = 1/4,
		\quad M = 5^m Q, \\
		\varepsilon = \inf \big \{
		\varepsilon_{\ref{lemma:lipschitz_approximation}} (\adim, Q,
		L, M, \delta_1, \delta_2, \delta_3, \delta_4 ), ( 2
		\isoperimetric{\vdim} )^{-1} \big \},
	\end{gather*}
	and $R : \classification{U}{z}{\Tan^\vdim ( \| V \|, z ) \in
	\grass{\adim}{\vdim}} \to \Hom ( \rel^\adim, \rel^\adim )$ by
	\begin{gather*}
		R ( z ) = \project{\Tan^\vdim ( \| V \|, z )} \quad
		\text{whenever $z \in U$ with $\Tan^\vdim ( \| V \|, z ) \in
		\grass{\adim}{\vdim}$}.
	\end{gather*}
	For $i \in \nat$ let $C_i$ denote the set of all $z \in \spt
	\| V \|$ such that either $\cball{z}{1/i} \not \subset U$ or
	\begin{gather*}
		\measureball{\| \delta V \|}{\cball{z}{t}} >
		(2 \varepsilon/3) \, \| V \| ( \cball{z}{t} )^{1-1/\vdim}
		\quad \text{for some $0 < t < 1/i$},
	\end{gather*}
	let $D_i (w)$ for $w \in \dmn R$ denote the set of all $z \in U$
	such that either $\cball{z}{1/i} \not \subset U$ or
	\begin{gather*}
		\tint{\cball{z}{t}}{} | R ( \xi ) - R(w) |^q \ud \| V \|
		\xi > ( \varepsilon / 3 )^q \, \measureball{\| V \|}{
		\cball{z}{t}} \quad \text{for some $0 < t < 1/i$}
	\end{gather*}
	and define $X_i$ for $i \in \nat$ by
	\begin{gather*}
		X_i =
		\bigclassification{U}{z}{\density^{\vdim^2 / ( \vdim-p )} (
		\| V \| \restrict C_i, z ) = 0 } \quad \text{if $p < \vdim$},
		\\
		X_i = U \without \Clos{C_i} \quad \text{if $p=\vdim$},
	\end{gather*}
	as well as $Y_i$ for $i \in \nat$ by
	\begin{gather*}
		Y_i = (\dmn R) \cap \big \{ w \with \density^{\vdim+\alpha q}
		( \| V \| \restrict D_i( w ), w ) = 0 \big \}.
	\end{gather*}
	Since $C_{i+1} \subset C_i$ and $D_{i+1} (w) \subset D_i (w)$ for $w
	\in \dmn R$, one notes $X_i \subset X_{i+1}$ and $Y_i \subset Y_{i+1}$
	for $i \in \nat$. $X_i$ are Borel sets. $Y_i$ are $\| V \|$ measurable
	sets by \cite[3.7\,(ii)]{snulmenn.isoperimetric}. $P$ is
	$\| V\|$ measurable by \cite[3.7]{snulmenn.isoperimetric}. Moreover,
	\begin{gather*}
		\| V \| \big ( U \without {\textstyle\bigcup} \{ X_i \with i
		\in \nat \} \big ) = 0, \quad 
		\| V \| \big ( P \without {\textstyle\bigcup} \{ Y_i \with i
		\in \nat \} \big ) = 0
	\end{gather*}
	by \cite[2.5,\,9,\,10, 3.7\,(ii)]{snulmenn.isoperimetric}.

	Define a measure $\mu$ on $U$ such that $\mu + | \mathbf{h} ( V ;
	\cdot ) | \| V \| = \| \delta V \|$ and $J =
	\classification{P}{z}{\density^\vdim ( \| V \|, z ) = Q}$. The
	remaining assertion will be shown at a point $a$ such that for some $i
	\in \nat$
	\begin{gather*}
		a \in X_i \cap Y_i \cap ( \dmn R ), \quad \cball{a}{4/i}
		\subset U, \\
		\density^\vdim ( \| V \|, a ) = Q, \quad \density^\vdim ( \| V
		\| \restrict U \without ( J \cap X_i \cap Y_i ) , a ) = 0, \\
		\text{$R$ is approximately continuous at $a$ with respect to
		$\| V \|$}.
	\end{gather*}
	These conditions are satisfied by $\| V \|$ almost all $a \in J$ by
	the preceding remarks and \cite[2.9.11,\,13]{MR41:1976}. Fix such $a$
	and $i$, choose $0 < \kappa \leq 1/2$ such that $ ( 1 + \kappa )^\vdim
	Q < Q + 1/2$, and define $\lambda = ( 1 + \kappa^2 )^{-1/2}$ and
	$\delta = (1 - \lambda)/2$. Noting for $S \in \grass{\adim}{\vdim}$
	with $| \project{S} - R (a) | < \delta$ and $0 < s < \infty$
	\begin{gather*}
		\classification{\rel^\adim}{z}{ | \project{S} ( z-a ) | \leq
		\lambda | z-a |} \subset \classification{\rel^\adim}{z}{ |
		R(a) (z-a) | \leq ( \lambda + \delta ) | z-a |}, \\
		\classification{\cylind{S}{a}{s}}{z}{ | \project{S} (z-a) | >
		\lambda |z-a| } \subset \cylinder{S}{a}{s}{\kappa s} \subset
		\cball{a}{(1+\kappa) s}, \\
		0 < \lambda + \delta < 1, \quad \density^\vdim ( \| V \|
		\restrict \{ z \with | R(a) (z-a) | \leq ( \lambda + \delta )
		|z-a| \} , a ) = 0
	\end{gather*}
	by \cite[3.2.16]{MR41:1976}, one infers the existence of $0 < s <
	(2i)^{-1}$ such that
	\begin{gather*}
		( Q-1/2 ) \unitmeasure{\vdim} s^\vdim \leq \| V \| (
		\cylinder{S}{a}{s}{s} ) \leq ( Q+1/2 ) \unitmeasure{\vdim}
		s^\vdim, \\
		\| V \| ( \cylinder{S}{a}{s}{5s/4} \without
		\cylinder{S}{a}{s}{s/2}) \leq (1/2) \unitmeasure{\vdim}
		s^\vdim, \\
		\| V \| ( \classification{\rel^\adim}{z}{\dist ( z,
		\cylinder{S}{a}{s}{s} ) < 2s } ) \leq \measureball{\| V \|}{
		\cball{a}{4s}} \leq M \unitmeasure{\vdim} s^\vdim,
	\end{gather*}
	whenever $S \in \grass{\adim}{\vdim}$ with $| \project{S} - R(a) | <
	\delta$.
	
	Define $A$ to be the set of all $z \in \oball{a}{s} \cap \spt \| V \|$
	such that
	\begin{gather*}
		\measureball{\| \delta V \|}{\cball{z}{t}} \leq ( 2
		\varepsilon / 3 ) \| V \| ( \cball{z}{t} )^{1-1/\vdim}, \\
		\tint{\cball{z}{t}}{} | R ( \xi ) - R ( a ) | \ud \| V \| \xi
		\leq ( 2 \varepsilon / 3 ) \measureball{\| V \|}{\cball{z}{t}}
	\end{gather*}
	whenever $0 < t < 2s$,
	\begin{gather*}
		O = \classification{\orthproj{\adim}{\vdim}}{\pi}{ | \pi^\ast
		\circ \pi - R(a) | < \inf \{ \delta, \varepsilon/ 3 \} }, \\
		W = \classification{\oball{a}{s} \cap X_i \cap Y_i}{w}{ | R(w)
		- R (a) | \leq \varepsilon / 3}, \quad Z = W \cap A \cap J
		  \without N
	\end{gather*}
	where $N$ is the set of all $w \in W$ such that one of the following
	three conditions is violated
	\begin{gather*}
		w \in P', \quad \density^\vdim ( \mu, w ) = 0, \quad \lim_{t
		\to 0+} t^{-\vdim} \tint{\cball{w}{t}}{} | \mathbf{h} (V;\xi) -
		\mathbf{h} ( V; w ) | \ud \| V \| \xi = 0.
	\end{gather*}
	Note $\| V \| ( N ) = 0$ by \cite[2.9.10,\,11]{MR41:1976}.

	Now, fix $\pi_1 \in O$, $S = \im \pi_1^\ast$ and choose $\pi_2 \in
	\orthproj{\adim}{\codim}$ with $\pi_2 \circ \pi_1^\ast = 0$. The proof
	will be concluded by showing $\density^\vdim ( \| V \| \restrict Z , a
	) = Q$ and constructing $g$, $G$, $K$ and $T$ with the asserted
	properties. For this purpose assume $a = 0$ and $\pi_1 = \pp$ and
	$\pi_2 = \qq$ using isometries and identifying $\rel^\adim \simeq
	\rel^\vdim \times \rel^\codim$. Define
	\begin{gather*}
		u (w) = (s - |w-a|)/2 \quad \text{for $w \in W$}
	\end{gather*}
	and note $u(w) > 0$. Moreover, define $B$, $f$ as in
	\ref{lemma:lipschitz_approximation} with $\delta$
	replaced by $\varepsilon$ and whenever $w \in W$ and $0
	< t \leq u(w)$ define $B_{w,t}$, $C_{w,t}$ and $D_{w,t}$ as in
	\ref{lemma:lipschitz_approximation}\,\eqref{item:lipschitz_approximation:estimate_b}\,\eqref{item:lipschitz_approximation:pde}
	with additionally $a$, $s$ replaced by $w$, $t$. Since $| \project{S}
	- R (a) | \leq \varepsilon/3$ and $Z \subset
	\classification{A}{z}{\density^\vdim ( \| V \|, z ) = Q}$, one
	  infers from
	\ref{lemma:lipschitz_approximation}\,\eqref{item:lipschitz_approximation:para}
	that $Z \subset \graph_Q f$ and
	\begin{gather*}
		\density^0 ( \| f ( \pp (z)) \|, \qq (z) ) = Q, \quad
		( \pp^\ast + \qq^\ast \circ \boldsymbol{\eta}_Q \circ f ) (
		\pp ( z ) ) = z
	\end{gather*}
	whenever $z \in Z$. Using Kirszbraun's theorem (cf.
	\cite[2.10.43]{MR41:1976}) one extends $\boldsymbol{\eta}_Q \circ f$
	to a function $g : \rel^\vdim \to \rel^\codim$ such that
	\begin{gather*}
		\Lip g = \Lip ( \boldsymbol{\eta}_Q \circ f )
	\end{gather*}
	and defining $G = \pp^\ast + \qq^\ast \circ g$, $K = \pp \lIm Z
	\rIm$ and $T \in \mathscr{D}' ( \rel^\vdim, \rel^\codim )$ by
	\begin{gather*}
		T ( \theta ) = \tint{}{} \big < D \theta (x), D \Psi_0^\S ( D
		g (x) ) \big > \ud \mathscr{L}^\vdim x \quad \text{for
		$\theta \in \mathscr{D} ( \rel^\vdim, \rel^\codim )$},
	\end{gather*}
	the properties \eqref{item:great_approx:def},
	\eqref{item:great_approx:lip} and \eqref{item:great_approx:defT} are
	evident noting
	\ref{lemma:lipschitz_approximation}\,\eqref{item:lipschitz_approximation:lip}.

	Next, it will be shown
	\begin{gather*}
		B_{w,t} \subset \oball{a}{s} \cap ( \spt \| V \| ) \without A
		\subset C_i \cup D_i (w)
	\end{gather*}
	whenever $w \in W$, $0 < t \leq u(w)$. The first inclusion is readily
	verified noting $| \project{S} - R (a) | \leq \varepsilon/3$. If $z \in \oball{a}{s} \cap (
	\spt \| V \| ) \without A$, then
	\begin{gather*}
		\text{either} \quad \measureball{\| \delta V
		\|}{\cball{z}{t}} > (2\varepsilon/3) \, \| V \| ( \cball{z}{t}
		)^{1-1/\vdim} \quad \text{for some $0 < t < 2s$}, \\
		\text{or} \quad {\textstyle\int_{\cball{z}{t}}} | R ( \xi )
		- R(a) | \ud \| V \| \xi > (2\varepsilon/3) \,
		\measureball{\| V \|}{\cball{z}{t}} \quad \text{for some $0 <
		t < 2s$}.
	\end{gather*}
	In the first case, this implies $z \in C_i$, in the second case,
	\begin{gather*}
		\begin{aligned}
			& (2\varepsilon/3) \, \measureball{\| V
			\|}{\cball{z}{t}} < {\textstyle\int_{\cball{z}{t}}}
			| R ( \xi ) - R(a) | \ud \| V \| \xi \\
			& \qquad \leq {\textstyle\int_{\cball{z}{t}}} | R
			(\xi) - R (w) | \ud \| V \| \xi + | R (a) - R(w) |
			\, \measureball{\| V \|}{\cball{z}{t}},
		\end{aligned} \\
		\begin{aligned}
			( \varepsilon / 3 ) \,
			\measureball{\| V \|}{\cball{z}{t}} & <
			{\textstyle\int_{\cball{z}{t}}} | R ( \xi ) - R (w)
			| \ud \| V \| \xi \\
			& \leq \| V \| ( \cball{z}{t} )^{1-1/q} \big (
			{\textstyle\int_{\cball{z}{t}}} | R(\xi)-R(w)|^q \ud
			\| V \| \xi \big )^{1/q},
		\end{aligned}
	\end{gather*}
	hence $z \in D_i (w)$, and the second inclusion and hence the claim
	are proven. The inclusions imply the \emph{density estimate}
	\begin{gather*}
		\density^{\vdim + \alpha q} ( \| V \| \restrict B, w ) =
		\density^{\vdim + \alpha q} ( \| V \| \restrict ( U \without
		A), w ) = 0 \quad \text{whenever $w \in W$}.
	\end{gather*}
	Noting $a \in W$ and $\density^\vdim ( \| V \| \restrict U \without (W
	\cap J), a ) = 0$, one infers in particular
	\begin{gather*}
		\density^\vdim ( \| V \| \restrict U \without Z, a ) =
		0, \quad \density^\vdim ( \| V \| \restrict Z, a ) = Q
	\end{gather*}
	and it remains to verify that $g$, $G$, $K$, and $T$ satisfy
	\eqref{item:great_approx:K}, \eqref{item:great_approx:pde} and
	\eqref{item:great_approx:diff}.

	In preparation to this, the following \emph{tilt estimate} will be
	shown with $\Delta_1 = ( 1 + L^2 )^{1/2} ( 1 - L^2 )^{-1/2}
	\vdim^{1/2}$
	\begin{gather*}
		\begin{aligned}
			& Q^{-1/2} \big ( \tint{\cball{\pp (z)}{t} \cap \dmn
			f}{} | \ap A f (x) \aplus ( - \tau ) |^r \ud
			\mathscr{L}^\vdim x \big )^{1/r} \\
			& \qquad \leq \Delta_1 \big ( 
			\tint{\cylinder{S}{z}{t}{\delta_4 t}}{} |
			R ( \xi ) - \project{\tau} |^r \ud \| V \| \xi \big
			)^{1/r}
		\end{aligned}
	\end{gather*}
	whenever $1 \leq r < \infty$, $z \in Z$, $0 < t \leq u(z)$, $\tau \in
	\Hom ( \rel^\vdim, \rel^\codim )$ with $\| \tau \| \leq L$ (here the
	identification $\tau \subset \rel^\vdim \times \rel^\codim \simeq
	\rel^\adim$ is used); in fact, recalling $L \leq 1/8$ and $z \in
	\graph_Q f$, one notes
	\begin{gather*}
		\graph_Q f | \cball{\pp (z)}{t} \subset
		\cylinder{S}{z}{t}{\delta_4t} \subset \cylinder{S}{a}{s}{s},
	\end{gather*}
	hence for $0 < \gamma < \infty$
	\begin{gather*}
		\bigclassification{\cball{\pp (z)}{t)}}{x}{Q^{-1/2} | \ap A f
		(x) \aplus ( -\tau ) | > \gamma }
	\end{gather*}
	is $\mathscr{H}^\vdim$ almost contained in
	\begin{gather*}
		\pp \biglIm \classification{\cylinder{S}{z}{t}{\delta_4t}}{
		\xi}{ \Delta_1 | R ( \xi ) - \project{\tau} | > \gamma}
		\bigrIm
	\end{gather*}
	by
	\ref{lemma:lipschitz_approximation}\,\eqref{item:lipschitz_approximation:lip}\,\eqref{item:lipschitz_approximation:misc}
	and Allard \cite[8.9\,(5)]{MR0307015}. For $x \in K$, taking $z = G
	(x)$ and $\tau$ associated to $\im R (z)$, one infers, noting
	$\density^{\vdim + \alpha q} ( \mathscr{L}^\vdim \restrict \rel^\vdim
	\without \dmn f, x ) = 0$ by the density estimate for $B$ and
	\ref{lemma:lipschitz_approximation}\,\eqref{item:lipschitz_approximation:estimate_b}
	and $\Delta_1 \leq 2 \vdim^{1/2}$,
	\begin{gather*}
		\begin{aligned}
			& \limsup_{t \pluslim{0}} t^{-\beta-\vdim/r} \big (
			\tint{\cball{x}{t}}{} | Dg (\zeta) - \tau |^r \ud
			\mathscr{L}^\vdim \zeta \big)^{1/r} \\
			& \qquad \leq 2 \vdim^{1/2} \limsup_{t \pluslim{0}}
			t^{-\beta-\vdim/r} \big( \tint{\cball{z}{t}}{} |
			R(\xi) - R(z) |^r \ud \| V \| \xi \big )^{1/r}
		\end{aligned}
	\end{gather*}
	whenever $x \in K$, $0 < \beta \leq 1$, $1 \leq r < \infty$, and
	$\beta r \leq \alpha q$, hence in particular, taking $\beta = \alpha
	\inf \{ 1,q/r \}$ and noting that the right hand side in this case is
	finite by \cite[2.4.17]{MR41:1976} as $z \in P$,
	\begin{gather*}
		\lim_{t \to 0+} \big (
		{\textstyle\fint_{\cball{x}{t}}} | D g ( \zeta ) - \tau
		|^r \ud \mathscr{L}^\vdim \zeta \big )^{1/r} = 0 \quad
		\text{for $1 \leq r < \infty$}
	\end{gather*}
	and $g$ is differentiable at $x$ with $Dg(x) = \tau$ by the argument
	in \cite[Theorem 6.2.1]{MR1158660}. Since $Z \subset \im G$, $K$
	is $\mathscr{L}^\vdim$ measurable, hence \eqref{item:great_approx:K}
	and \eqref{item:great_approx:pde} are now proven and it remains to
	prove \eqref{item:great_approx:diff}.

	Choose $\eta \in \mathscr{D}^0 ( \rel^\codim )$ such that
	\begin{gather*}
		0 \leq \eta (y) \leq 1 \quad \text{for $y \in \rel^\codim$},
		\\
		\spt \eta \subset \oball{0}{1/4}, \quad \cball{0}{1/8} \subset
		\Int (\classification{\rel^\codim}{y}{\eta (y) = 1 })
	\end{gather*}
	and define $T_x$ for $x \in K$ as in \eqref{item:great_approx:diff}.
	Fix $x \in K$, let $z = G(x)$, note $\pp (z) = x$  and abbreviate
	\begin{gather*}
		\theta_t = t^{-\vdim} \theta \circ
		\mutau{t}{\pp(z)}, \quad \eta_t = \eta
		\circ \mutau{t}{\qq(z)}
	\end{gather*}
	whenever $0 < t \leq u(z)$ and $\theta \in \mathscr{D}
	( \rel^\vdim , \rel^\codim )$. The
	remaining estimate will carried out by showing that
	\begin{gather*}
		Q T_x ( \theta_t ) - ( \delta V ) ( (
		\eta_t \circ \qq ) \cdot ( \qq^\ast \circ
		\theta_t \circ \pp ) ), \\
		( \delta V ) ( ( \eta_t \circ \qq ) \cdot (
		\qq^\ast \circ \theta_t \circ \pp ) ) - Q
		\tint{}{} \big < D \theta_t (\zeta), D\Psi_0^\S ( Dg(\zeta) )
		\big > \ud \mathscr{L}^\vdim \zeta 
	\end{gather*}
	both tend to $0$ as $t \to 0+$ uniformly with respect to $\theta
	\in \mathscr{D} ( \rel^\vdim, \rel^\codim )$ such that $\spt \theta
	\subset \oball{0}{1}$ and $\norm{D\theta}{\infty}{0,1} \leq 1$.

	To prove the first estimate, one notes that the conditions
	$\density^{\vdim-1} ( \| \delta V \|, z ) = 0$, $\density^\vdim ( \| V
	\|, z ) = Q$ and $z \in P$ imply, for example using Allard
	\cite[6.4,\,5]{MR0307015} and \cite[3.1]{snulmenn.poincare},
	\begin{gather*}
		t^{-\vdim} \tint{}{} \phi ( t^{-1} (\xi-z), \im R
		( \xi) ) \ud \| V \| \xi \to Q \tint{\im R(z)}{} \phi ( \xi,
		\im R ( z ) ) \ud \mathscr{H}^\vdim \xi
	\end{gather*}
	as $t \to 0+$ whenever $\phi \in \ccspace{\rel^\adim \times
	\grass{\adim}{\vdim}}$. Since also, noting
	\begin{gather*}
		( \eta_t \circ \qq ) \cdot ( \qq^\ast \circ
		\theta_t \circ \pp ) = t^{-\vdim} \big ( (
		\eta \circ \qq ) \cdot ( \qq^\ast \circ \theta \circ \pp )
		\big ) \circ \mutau{t}{z}, \\
		\cylind{T}{0}{1} \cap \Tan^\vdim ( \| V \|, z ) \subset
		\cylinder{T}{0}{1}{1/8}
	\end{gather*}
	as $L \leq 1/8$ and $z \in \graph_Q f$, one readily uses the
	conditions on $\delta V$ and $\mathbf{h} ( V ; \cdot )$ imposed by the
	fact $z \notin N$ to infer
	\begin{gather*}
		\begin{aligned}
			& \lim_{t \pluslim{0}} ( \delta V ) ( (
			\eta_t \circ \qq ) \cdot ( \qq^\ast
			\circ \theta_t \circ \pp) ) \\
			& \qquad = - Q \tint{\im R ( z)}{} \mathbf{h} ( V ; z )
			\bullet ( \eta \circ \qq ) (\xi) ( \qq^\ast
			\circ \theta \circ \pp ) (\xi) \ud \mathscr{H}^\vdim
			\xi  \\
			& \qquad = - Q {\textstyle\int} \Psi_0^\S ( Dg( x )
			) \mathbf{h} ( V ; z ) \bullet ( \qq^\ast \circ
			\theta ) (\zeta) \ud \mathscr{L}^\vdim \zeta = Q T_x (
			\theta_t )
		\end{aligned}
	\end{gather*}
	and the convergence is uniform with respect to $\theta \in \mathscr{D}
	( \rel^\vdim, \rel^\codim )$ such that $\spt \theta \subset
	\oball{0}{1}$ and $\norm{D\theta}{\infty}{0,1} \leq 1$ as this family
	of functions is compact with respect to $\norm{\cdot}{\infty}{0,1}$ by
	\cite[2.10.21]{MR41:1976} and $\density^{\ast \vdim} ( \| \delta V \|,
	z ) < \infty$.

	To prove the second estimate, define
	\begin{gather*}
		\gamma_1 = \sup \| D^2 \Psi_0^\S \| \lIm
		\cball{0}{\vdim^{1/2}L}\rIm, \quad \gamma_2 = \Lip \big ( D^2
		\Psi_0^\S | \cball{0}{3\vdim^{1/2} L } \big).
	\end{gather*}
	Apply
	\ref{lemma:lipschitz_approximation}\,\eqref{item:lipschitz_approximation:pde}
	with $\tau = Dg(x)$ and $0 < t \leq u(z)$ to obtain
	\begin{align*}
		& \big | Q \tint{}{} \big < D \theta_t (x),
		D\Psi_0^\S (
		Dg(x) ) \big > \ud \mathscr{L}^\vdim x - ( \delta V ) ( (
		\eta_t \circ \qq ) \cdot ( \qq^\ast \circ
		\theta_t \circ \pp ) ) \big | \\
		& \qquad \leq
		\begin{aligned}[t]
			& \gamma_1 Q \vdim^{1/2} L
			{\textstyle\int_{C_{z,t}}} | D
			\theta_t | \ud \mathscr{L}^\vdim \\
			& + \gamma_2 {\textstyle\int_{E_{z,t}\without
			C_{z,t}}} | D \theta_t (\zeta) || \ap A
			f(\zeta) \aplus ( - Dg(x)) |^2 \ud \mathscr{L}^\vdim
			\zeta \\
			& + \vdim^{1/2} {\textstyle\int_{D_{z,t}}} | D (
			( \eta_t \circ \qq ) \cdot ( \qq^\ast \circ
			\theta_t \circ \pp ) ) | \ud \| V \|.
		\end{aligned}
	\end{align*}
	The first and the third summand on the right hand side may be
	estimated by use of
	\ref{lemma:lipschitz_approximation}\,\eqref{item:lipschitz_approximation:estimate_b}
	as follows
	\begin{gather*}
		{\textstyle\int_{C_{z,t}}} | D \theta_t | \ud
		\mathscr{L}^\vdim \leq t^{-\vdim-1} \mathscr{L}^\vdim (
		C_{z,t} ) \leq \Delta_2 t^{-\vdim-1} \| V \| (
		B_{z,t} ) , \\
		\begin{aligned}
			& {\textstyle\int_{D_{z,t}}} | D ( (
			\eta_t \circ \qq ) \cdot ( \qq^\ast \circ
			\theta_t \circ \pp )) | \ud \| V \| \\
			& \quad \leq t^{-\vdim-1} ( 1 + \norm{D
			\eta}{\infty}{0,1}) \| V \| ( D_{z,t} ) \leq
			\Delta_2 t^{-\vdim-1} ( 1 + \norm{D
			\eta}{\infty}{0,1} ) \| V \| (B_{z,t})
		\end{aligned}
	\end{gather*}
	where $\Delta_2 =
	\Gamma_{\ref{lemma:lipschitz_approximation}\eqref{item:lipschitz_approximation:estimate_b}}
	( Q, \vdim )$, hence the density estimate for $B$ applies recalling
	$\alpha q \geq 1$. To estimate the remaining summand, one computes
	\begin{gather*}
		\begin{aligned}
			& {\textstyle\int_{E_{z,t} \without
			C_{z,t}}} | D \theta_t (\zeta) | | \ap A f
			(\zeta) \aplus (-Dg(x)) |^2 \ud \mathscr{L}^\vdim
			\zeta \\
			& \qquad \leq t^{-1-\vdim} {\textstyle\int_{
			\cball{x}{t} \cap \dmn f}} | \ap A f (\zeta)
			\aplus ( -Dg (x)) |^2 \ud \mathscr{L}^\vdim \zeta,
		\end{aligned}
	\end{gather*}
	uses the tilt estimate and recalls $z \in P'$.
\end{proof}
\begin{remark} \label{remark:dnorm}
	It would significantly simplify the treatment in
	\ref{lemma:l1_estimate}--\ref{lemma:arbi} if one could obtain an
	estimate in $\dnorm{\cdot}{r}{a,s}$ in \eqref{item:great_approx:diff}
	for some $r > 1$. However, in this case it seems to be unclear how to
	control the integral over $D_{z,t}$ in the last paragraph as this set
	may contain arbitrarily steep parts of the varifold, see Brakke's
	example in \cite[6.1]{MR485012}.
\end{remark}
\begin{miniremark} \label{miniremark:perp}
	If $f : \rel^\vdim \to \rel^\codim$ is a linear map, $v \in
	\rel^\adim$ is orthogonal to $\im ( \pp^\ast + \qq^\ast \circ f )$
	then $v \in \ker ( \pp^\ast + \qq^\ast \circ f )^\ast$, $\pp (v) = - (
	f^\ast \circ \qq ) (v)$ and
	\begin{gather*}
		( \qq^\ast - \pp^\ast \circ f^\ast ) ( \qq (v) ) = v.
	\end{gather*}
\end{miniremark}
\begin{theorem} \label{thm:main_theorem}
	Suppose $\vdim, \adim \in \nat$, $\vdim < \adim$, $U$ is an open
	subset of $\rel^\adim$, $V \in \IVar_\vdim ( U )$ and $\| \delta V \|$
	is a Radon measure.

	Then there exists a countable collection $C$ of $\vdim$ dimensional
	submanifolds of $\rel^\adim$ of class $\class{2}$ such that $\| V \| (
	U \without \bigcup C ) = 0$ and each member $M$ of $C$ satisfies
	\begin{gather*}
		\mathbf{h} ( V ; z ) = \mathbf{h} ( M ; z ) \quad \text{for
		$\| V \|$ almost all $z \in U \cap M$}.
	\end{gather*}
\end{theorem}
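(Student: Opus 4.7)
The plan is to combine the approximation Lemma \ref{lemma:great_approx} with the second order differentiability criterion of Section \ref{sec:crit}, and then invoke a Calder\'on--Zygmund type Whitney extension to pass from pointwise polynomial approximants to genuine $\class{2}$ submanifolds. First I would reduce to the set where the tilt-excess decays: by Brakke's estimate (as phrased in \cite[8.6]{snulmenn:decay.v2}), for $\| V \|$-almost every $a$ one has $\Tan^\vdim ( \| V \|, a ) \in \grass{\adim}{\vdim}$ and the quantity displayed in Lemma \ref{lemma:great_approx} is finite with $q = 2$, $\alpha = 1/2$. Choosing $L$ to be specified momentarily and $p = 1$ in Lemma \ref{lemma:great_approx} then yields a countable collection of $\| V \|$ measurable pieces $Z = G \lIm K \rIm$, each covered up to $\| V \|$-measure zero by a rotated graph of a Lipschitzian $g : \rel^\vdim \to \rel^\codim$ with $\Lip g \leq L$.

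Next I modify the nonparametric area integrand so that the hypotheses of \ref{lemma:arbi} are met. Since $D^2 \Psi^\S_0 ( 0 ) = \Upsilon$, Lemma \ref{lemma:cutoff} applied at $\sigma = 0$ with $k = 2$ and a suitable radius $\delta$ produces $F : \Hom ( \rel^\vdim, \rel^\codim ) \to \rel$ of class $\class{\infty}$ which coincides with $\Psi^\S_0$ on $\cball{0}{\delta/2}$ and satisfies $\| D^2 F ( \sigma ) - \Upsilon \| \leq \varepsilon$ uniformly as well as $\Lip D^2 F < \infty$, with $\varepsilon$ below the thresholds imposed by Theorem \ref{thm:criterion} and Lemma \ref{lemma:arbi}. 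Taking $L \leq \delta/2$ ensures that the distribution $T$ produced in \ref{lemma:great_approx}\,\eqref{item:great_approx:defT} equals $L_F ( g )$. Property \eqref{item:great_approx:pde} with $\beta = 1/2$, $r = 2$ implies that every $x \in K$ lies in the set $B_1$ of Lemma \ref{lemma:arbi} (in fact in $B_2$), and property \eqref{item:great_approx:diff} gives $x \in A_2 \subset A_1$ with constant distribution $T_x$ explicitly identified. Hence Lemma \ref{lemma:arbi}\,\eqref{item:arbi:1} yields, for $\mathscr{L}^\vdim$-almost every $x \in K$, a polynomial $Q_x : \rel^\vdim \to \rel^\codim$ of degree at most $2$ with $\lim_{r \pluslim{0}} r^{-2-\vdim} \norm{g - Q_x}{1}{x,r} = 0$. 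Remark \ref{remark:arbi} then furnishes a sequence $u_i : \rel^\vdim \to \rel^\codim$ of class $\class{2}$ such that the sets $\{ x \in K : D^k u_i ( x ) = D^k Q_x ( x )$ for $k = 0,1,2 \}$ exhaust $K$ modulo $\mathscr{L}^\vdim$-null sets. The rotated graphs $G \lIm \graph u_i \rIm = \pi_1^\ast \lIm \rel^\vdim \rIm + \pi_2^\ast \circ u_i$ are $\vdim$-dimensional $\class{2}$ submanifolds of $\rel^\adim$; assembling these over the countable family of pieces $Z$ gives the required collection $C$, covering $\| V \|$-almost all of $U$ by construction.

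For the equality of mean curvatures I combine \ref{lemma:arbi}\,\eqref{item:arbi:2} with the explicit formula for $T_x$ in \ref{lemma:great_approx}\,\eqref{item:great_approx:diff}. At any point $x \in K$ where the conclusion of \eqref{item:arbi:1} holds and the Whitney matching is valid, the constant distribution $T_x$ must equal both the constant associated with $\Psi^\S_0 ( Dg ( x ) ) \, \pi_2 \circ \mathbf{h} ( V ; G(x))$ (with the appropriate sign) and the one given by $\leftB D^2 Q_x ( x ), C_F ( D Q_x ( x ) ) \rightB = \leftB D^2 u_i ( x ), C_{\Psi^\S_0} ( D u_i ( x ) ) \rightB$, using that $F$ agrees with $\Psi^\S_0$ where $Dg$ takes values. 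The latter expression is exactly the nonparametric mean curvature of the graph of $u_i$ multiplied by the area element $\Psi^\S_0 ( Du_i ( x ) )$. Inverting the rotation $G$ and using Miniremark \ref{miniremark:perp} to match the codomain of the nonparametric equation with the normal space of $M$ at $z = G ( x )$ then identifies $\mathbf{h} ( V ; z )$ with $\mathbf{h} ( M ; z )$. The main obstacle in the proof is precisely this last step of geometric bookkeeping: tracking through the rotation and the form $C_F$ that the distributional identity of \ref{lemma:arbi}\,\eqref{item:arbi:2} recovers the correct geometric mean curvature vector in $\rel^\adim$; the preceding analytic pieces are by now essentially plug-and-play.
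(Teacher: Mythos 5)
Your route is the one the paper itself takes: Brakke decay, the cutoff Lemma \ref{lemma:cutoff}, Lemma \ref{lemma:great_approx} with $p=1$, $q=2$, $\alpha=1/2$, then Lemma \ref{lemma:arbi} together with Remark \ref{remark:arbi}, and finally the nonparametric mean curvature identity. One minor calibration slip: you take $L\le\delta/2$, but $\Lip g\le L$ only gives $|Dg(x)|\le L\vdim^{1/2}$, so one needs $L\vdim^{1/2}\le\delta/2$ (the paper sets $L=\vdim^{-1/2}\delta/2$) to guarantee the identification $C_F(Du_i(x))=C_{\Psi_0^\S}(Du_i(x))$ that you use in your last paragraph.

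The more substantive gap is in the final step. The analytic chain yields $\pi_2(\mathbf{h}(V;z))=\pi_2(\mathbf{h}(M_i;z))$ at $z=G(x)$, and Miniremark \ref{miniremark:perp} does show that $\pi_2$ restricted to $\Nor(M_i,z)$ is injective with an explicit left inverse, so $\mathbf{h}(M_i;z)$ can be read off from its $\pi_2$-image. But $\pi_2$ has the $\vdim$-dimensional kernel $\im\pi_1^\ast$, so knowing $\pi_2(\mathbf{h}(V;z))$ determines $\mathbf{h}(V;z)$ only modulo $\im\pi_1^\ast$. To conclude $\mathbf{h}(V;z)=\mathbf{h}(M_i;z)$ you must additionally know that $\mathbf{h}(V;z)\in\Nor^\vdim(\|V\|,z)=\Nor(M_i,z)$; this is Brakke \cite[5.8]{MR485012}, invoked at exactly this point in the paper, or alternatively it can be re-derived by letting $\pi_1$ range over the open set $O$, as in Remark \ref{remark:brakke}. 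Your phrase ``inverting the rotation'' suggests the identification comes for free, but without the perpendicularity of $\mathbf{h}(V;z)$ it does not.
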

\begin{proof}
	First, note that for $\| V\|$ almost all $z \in U$ there holds
	$\Tan^\vdim ( \| V \|, z ) \in \grass{\adim}{\vdim}$ and
	\begin{gather*}
		\lim_{r \to 0+} r^{-1/2-\vdim/2} \big ( \tint{\cball{z}{r}
		\times \grass{\adim}{\vdim}}{} | \project{S} -
		\project{\Tan^\vdim ( \| V \|, z )} |^2 \ud V (\xi,S) \big
		)^{1/2} = 0
	\end{gather*}
	by Brakke \cite[5.7,\,5]{MR485012} or \cite[8.6]{snulmenn:decay.v2}.
	Let $\Psi$ denote the area integrand, abbreviate $\Phi = \Psi_0^\S$
	and note $D^2 \Phi (0) = \Upsilon$ with $\Upsilon$ as in
	\ref{miniremark:situation_F} by \cite[5.1.9]{MR41:1976}.  Define
	$\varepsilon = \varepsilon_{\ref{lemma:arbi}} ( \vdim, \adim)$,
	$\Delta = \Gamma_{\ref{lemma:cutoff}} ( \vdim ( \codim ), 2 )$, $s =
	\varepsilon/\Delta$ and choose $0 < \delta < \infty$ such that
	\begin{gather*}
		\| D^2 \Phi ( \sigma ) - D^2 \Phi (0) \| \leq s
		\quad \text{whenever $\sigma \in \Hom ( \rel^\vdim ,
		\rel^\codim ) \cap \cball{0}{\delta}$}.
	\end{gather*}
	Applying \ref{lemma:cutoff} with $H$, $k$, $l$, $a$ replaced by
	$\Hom ( \rel^\vdim, \rel^\codim )$, $2$, $3$, $0$, one
	obtains $F : \Hom ( \rel^\vdim, \rel^\codim ) \to \rel$ of class $3$
	such that
	\begin{gather*}
		D^i F ( \sigma ) = D^i \Phi ( \sigma ) \quad \text{for $i
		= \{0,1,2\}$, $\sigma \in \Hom ( \rel^\vdim, \rel^\codim )
		\cap \cball{0}{\delta/2}$}, \\
		\| D^2 F ( \sigma ) - D^2 \Phi (0) \| \leq \Delta s =
		\varepsilon \quad \text{whenever $\sigma \in \Hom ( \rel^\vdim
		, \rel^\codim )$}, \\
		\text{$D^3 F$ has compact support},
	\end{gather*}
	hence $\Lip D^2 F < \infty$. Define $L = \vdim^{-1/2} \delta/2$ and
	apply \ref{lemma:great_approx} with $p$, $q$, $\alpha$ replaced by
	$1$, $2$, $1/2$ to obtain $P$ and $H$ with the properties listed
	there. Fix $Z \in H$ and take $\pi_1 \in O$ and $\pi_2$, $g$, $G$, $K$
	as in \ref{lemma:great_approx} to infer from \ref{lemma:arbi},
	\ref{remark:arbi} and
	\ref{lemma:great_approx}\,\eqref{item:great_approx:diff}, noting
	\ref{lemma:great_approx}\,\eqref{item:great_approx:pde} with $\beta
	=1/2$ and $r=2$, the existence a sequence of functions $u_i :
	\rel^\vdim \to \rel^\codim$ of class $\class{2}$ such that with $A_i =
	\classification{K}{x}{g(x) =u_i (x)}$ for $i \in \nat$
	\begin{gather*}
		\left < D^2 u_i (x), C_F ( D u_i (x) ) \right > = \Phi (
		Du_i (x) ) \pi_2 ( \mathbf{h} ( V ; G (x) ) )
	\end{gather*}
	for $\mathscr{L}^\vdim$ almost all $x \in A_i$. Defining $M_i = \im (
	\pi_1^\ast + \pi_2^\ast \circ u_i )$ and noting
	\begin{gather*}
		\left < D^2 u_i (x) , C_{\Phi} ( D u_i (x) ) \right > =
		\Phi ( D u_i (x) ) \pi_2 ( \mathbf{h} ( M_i ; ( \pi_1^\ast +
		\pi_2^\ast \circ u_i ) ( x) ))
	\end{gather*}
	for $x \in \rel^\vdim$ where $C_\Phi$ is as in
	\ref{miniremark:situation_F} and
	\begin{gather*}
		C_{\Phi} ( \sigma ) = C_F ( \sigma ) \quad \text{for
		$\sigma \in \Hom ( \rel^\vdim, \rel^\codim ) \cap
		\cball{0}{\delta/2}$}, \\
		|Du_i (x) | = | D g( x) | \leq L \vdim^{1/2} = \delta/2
		\quad \text{for $\mathscr{L}^\vdim$ almost all $x \in A_i$}
	\end{gather*}
	by \ref{lemma:great_approx}\,\eqref{item:great_approx:lip}, one
	concludes
	\begin{gather*}
		\pi_2 ( \mathbf{h} ( V ; G (x) ) ) = \pi_2 ( \mathbf{h} ( M_i
		; G (x) ) ) \quad \text{for $\mathscr{L}^\vdim$ almost all $x
		\in A_i$},
	\end{gather*}
	hence by \ref{miniremark:perp}, since $\mathbf{h} (V; z ) \in
	\Nor^\vdim ( \| V \|, z )$ for $\| V \|$ almost all $z$ by Brakke
	\cite[5.8]{MR485012},
	\begin{gather*}
		\mathbf{h} ( V ; G (x) ) = \mathbf{h} ( M_i
		; G (x) ) \quad \text{for $\mathscr{L}^\vdim$ almost all $x
		\in A_i$}.
	\end{gather*}
	Finally, recall $\| V \| ( U \without P ) = 0$.
\end{proof}
\begin{remark} \label{remark:brakke}
	One could also prove Brakke \cite[5.8]{MR485012} instead of using it.
	Since the proof then still yields a collection $C$ with all properties
	except of the last one, one can define a $\| V \|$
	measurable function $h$ such that for $\| V\|$ almost all $z \in U$
	there holds $h(z) = \mathbf{h} ( M ; z )$ whenever $z \in U \cap M$
	and $M \in C$.  Following the above proof, one obtains
	\begin{gather*}
		\pi_2 ( \mathbf{h} ( V ; G (x) ) ) = \pi_2 ( h ( G(x) ) )
		\quad \text{for $\mathscr{L}^\vdim$ almost all $x
		\in A_i$}
	\end{gather*}
	whenever $\pi_1 \in O$, $\pi_2 \in \orthproj{\adim}{\codim}$ with
	$\pi_2 \circ \pi_1^\ast = 0$, and, as $O$ is open, this suffices to
	conclude
	\begin{gather*}
		\mathbf{h} ( V ; G (x) ) = h (G(x)) \in \Nor^\vdim ( \| V \|,
		G (x) ) \quad \text{for $\mathscr{L}^\vdim$ almost all $x \in
		A_i$}.
	\end{gather*}
\end{remark}
\begin{remark} \label{remark:hutchinson}
	Noting \cite[2.10.19\,(4)]{MR41:1976}, one infers that the function
	mapping $\| V \|$ almost all $z$ onto $\project{\Tan^\vdim ( \| V \|,
	z )} \in \Hom ( \rel^\adim, \rel^\adim )$ is $( \| V \|, \vdim )$
	approximately differentiable at $\| V \|$ almost all $z$.

	Therefore, combining \ref{thm:main_theorem} with Mantegazza
	\cite[Remark 3.9, Theorem 5.4]{MR1412686}, one obtains the following
	proposition on curvature varifolds with boundary in the sense of
	Mantegazza \cite[Definition 3.1]{MR1412686}: \emph{If $V$ is a
	curvature varifold with boundary in an open subset $U$ of $\rel^\adim$
	then then there exists a countable collection $C$ of $\vdim$
	dimensional submanifolds of $\rel^\adim$ of class $\class{2}$ such
	that $\| V \| ( U \without \bigcup C ) = 0$ and such that for each
	member $M$ of $C$ the second fundamenal forms of $V$ and $M$ agree at
	$\| V \|$ almost every $z \in U \cap M$.} Clearly, this includes
	curvature varifolds in the sense of Hutchinson \cite[5.2.3]{MR825628}.
\end{remark}
\section{Applications to decay rates of tilt-excess for integral varifolds}
\label{sec:applications}
The present section discusses some consequences of \ref{thm:main_theorem} in
terms of decay and differentiability of tilt quantities.
\begin{lemma} \label{lemma:prep_tilt}
	Suppose $\vdim, \adim, Q \in \nat$, $\vdim < \adim$, either $p = \vdim
	= 1$ or $1 < p < \vdim = 2$ or $1 \leq p < \vdim > 2$ and $\frac{\vdim
	p}{\vdim-p} = 2$, $0 < \delta \leq 1$, and $1 \leq M < \infty$.

	Then there exist positive, finite numbers $\varepsilon$ and $\Gamma$
	with the following property.

	If $a \in \rel^\adim$, $0 < r < \infty$, $V \in \IVar_\vdim (
	\oball{a}{6r} )$, $\psi$ and $p$ are related to $V$ as in
	\ref{miniremark:situation}, $T \in \grass{\adim}{\vdim}$, $Z$ is a $\|
	V \|$ measurable subset of $\cylinder{T}{a}{r}{3r}$,
	\begin{gather*}
		( Q - 1/2 ) \unitmeasure{\vdim} r^\vdim \leq \| V \| (
		\cylinder{T}{a}{r}{3r} ) \leq ( Q + 1/2 ) \unitmeasure{\vdim}
		r^\vdim, \\
		\| V \| ( \cylinder{T}{a}{r}{4r} \without
		\cylinder{T}{a}{r}{r} ) \leq ( 1/2 ) \unitmeasure{\vdim}
		r^\vdim, \\
		\measureball{\| V \|}{\oball{a}{6r}} \leq M
		\unitmeasure{\vdim} r^\vdim, \quad
		\| V \| ( \cylinder{T}{a}{r/2}{r/2} ) \geq ( Q - 1/4 )
		\unitmeasure{\vdim} (r/2)^\vdim, \\
		\| V \| ( \cylinder{T}{a}{r}{3r} \without Z ) \leq \varepsilon
		\unitmeasure{\vdim} r^\vdim, \quad
		\big ( \tint{}{} | \project{S} - \project{T} |^2 \ud V (z,S)
		\big )^{1/2} \leq \varepsilon r^{\vdim/2},
	\end{gather*}
	then
	\begin{align*}
			& \big ( r^{-\vdim} \tint{\cylinder{T}{a}{r/4}{r/4}
			\times \grass{\adim}{\vdim}}{} | \project{S} -
			\project{T} |^2 \ud V (z,S) \big )^{1/2} \\
			& \qquad \leq
			\begin{aligned}[t]
				& \delta \big ( r^{-\vdim}
				\tint{\cylinder{T}{a}{r}{r} \times
				\grass{\adim}{\vdim}}{} | \project{S} -
				\project{T} |^2 \ud V (z,S) \big )^{1/2} \\
				& + \Gamma \big ( r^{-\vdim-1} \tint{Z}{}
				\dist (z-a,T) \ud \| V \| z + r^{1-\vdim/p}
				\psi ( \oball{a}{6r} )^{1/p} \big ).
			\end{aligned}
	\end{align*}
\end{lemma}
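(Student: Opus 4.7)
The plan is to combine the Lipschitz approximation of \ref{lemma:lipschitz_approximation} with a harmonic blow-up argument, in the tradition of De~Giorgi and Almgren.

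\textbf{Step 1 (Lipschitz approximation).} After translating to $a = 0$ and rescaling to $r = 1$, invoke \ref{lemma:lipschitz_approximation} with parameters $L = 1/8$, $\delta_1 = \delta_2 = \delta_3 = 1/2$, $\delta_4 = 1/4$ (which match the hypotheses of the present lemma once $\varepsilon$ is taken small enough) to obtain the multi-valued Lipschitz approximation $f : X_1 \to \qspace_Q(\rel^\codim)$, the bad set $B$, and a Kirszbraun extension $g : \rel^\vdim \to \rel^\codim$ of $\boldsymbol{\eta}_Q \circ f$ with $\Lip g \leq L$.

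\textbf{Step 2 (Tilt--height--PDE translation).} Translate the tilt-excess appearing in the conclusion into a statement about $g$. By \ref{lemma:lipschitz_approximation}\,\eqref{item:lipschitz_approximation:misc} together with Allard's slicing formula, the portion of $V$ lying over $X_1 \without \pp \lIm B \rIm$ contributes $Q \tint{X_1 \cap \cball{0}{\rho}}{} | D f |^2 \ud \mathscr{L}^\vdim$ to the tilt for $\rho \in \{1/4,1\}$ (using that $\Lip f$ is small so that $\graph_Q f$ sits inside the relevant cylinders), while the complementary contributions, arising from $B$, $X_2$, and $N$, are dominated by $\|V\|(\cylinder{T}{0}{1}{3} \without Z)$ and, through the isoperimetric-type bound implicit in the definition of $B$, by $r^{1-\vdim/p} \psi(\oball{0}{6})^{1/p}$. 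Similarly $\tint{Z}{} \dist(z, T) \ud \| V \| z$ controls $Q \tint{\cball{0}{1}}{} | \boldsymbol{\eta}_Q \circ f | \ud \mathscr{L}^\vdim$ up to the same errors.

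\textbf{Step 3 (Blow-up to a harmonic limit).} Argue by contradiction: assuming the conclusion fails for some $\delta_0 \in (0,1]$, extract sequences $V_i$, $T_i$, $Z_i$, $a_i$, $r_i$ satisfying the hypotheses with parameters $\varepsilon_i \to 0$ but violating the conclusion with $\delta_0$. After normalization ($a_i = 0$, $r_i = 1$, $T_i \to T_0$), set
\begin{equation*}
e_i^2 = \tint{\cylinder{T_i}{0}{1}{1} \times \grass{\adim}{\vdim}}{} | \project{S} - \project{T_i} |^2 \ud V_i
\end{equation*}
and consider blown-up functions $\tilde g_i = e_i^{-1} ( g_i - A_i )$, with affine $A_i$ chosen so that the $H^1$ bound from Step 2 yields weak $H^1$ convergence along a subsequence to some $g_\infty : \cball{0}{1} \to \rel^\codim$. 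The approximate Euler--Lagrange identity \ref{lemma:lipschitz_approximation}\,\eqref{item:lipschitz_approximation:pde}, combined with $D^2 \Psi_0^\S(0) = \Upsilon$ (the Dirichlet form), forces $\Lap g_\infty = 0$. Passing the violated normalized inequality to the limit contradicts the standard gradient decay for harmonic functions on $\cball{0}{1}$.

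\textbf{Main obstacle.} The delicate part is ensuring that the bad-set contributions in Step 2 and the residual terms in the Euler--Lagrange identity in Step 3 pass controllably to the limit. This rests on matching the bad-set measure estimate \ref{lemma:lipschitz_approximation}\,\eqref{item:lipschitz_approximation:estimate_b} to the first-variation quantity $r^{1-\vdim/p} \psi(\oball{0}{6})^{1/p}$; the Sobolev-exponent constraint $\vdim p/(\vdim - p) = 2$ (with its endpoint cases $\vdim \leq 2$) in the hypotheses is precisely what makes this absorption possible.
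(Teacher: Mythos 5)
The paper's own ``proof'' of this lemma is solely the citation \cite[7.5]{snulmenn:decay.v2}, so there is no in-text argument to compare against directly; the surrounding remark after \ref{thm:decay_rates} indicates that the cited proof extends Sch\"atzle's (direct, non-compactness) harmonic comparison argument, replacing Brakke's coercive estimate with \cite[3.9]{snulmenn:decay.v2}. Your blow-up/contradiction strategy is a genuinely different route in spirit, and it could in principle be carried out, but as written Step~3 has a gap that would actually derail the argument.

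The issue is the subtraction of the \emph{affine} functions $A_i$ before blowing up. The conclusion of the lemma keeps the \emph{same} reference plane $T$ on both sides, so $E(r/4)^{1/2}$ measures $|\project{S}-\project{T}|$, not the deviation from a re-tilted plane. If $A_i$ has a nontrivial linear part, then $|DA_i|$ is of order $e_i$ and reinserting $DA_i$ into $E(1/4)$ produces a term $\approx |DA_i|^2\,\mathscr{L}^\vdim(\cball{0}{1/4}) \sim e_i^2$ with a \emph{fixed} constant -- which cannot be absorbed into $\delta\,e_i$ for arbitrarily small $\delta$. Relatedly, for the harmonic limit $g_\infty$ there is no ``standard gradient decay'' at all when the comparison plane is fixed (a nonzero linear harmonic function is a counterexample); gradient decay holds only after subtracting the affine part, but that is exactly what the lemma does not permit. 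What makes the blow-up work here is a different mechanism: you must keep track of $\Gamma_i\to\infty$, not just $\varepsilon_i\to 0$. Failure of the conclusion together with the trivial inequality $E(1/4)^{1/2}\le e_i$ gives $(1-\delta)e_i > \Gamma_i(\text{errors})_i$, forcing $(\text{errors})_i/e_i\to 0$; in particular the normalized $L^1$ height tends to $0$, so (with at most a \emph{constant} subtraction to get $H^1$ compactness) the harmonic limit satisfies $\int_{B_1}|g_\infty|=0$, i.e.\ $g_\infty\equiv 0$. The contradiction is then obtained not from harmonic gradient decay but from the vanishing of $g_\infty$ plus a strong interior $L^2$ estimate for $D\tilde g_i$ on $B_{1/4}$ coming from the approximate Euler--Lagrange identity. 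You should replace the affine $A_i$ by constants (or drop the subtraction entirely), make the role of $\Gamma_i\to\infty$ explicit, and replace ``standard gradient decay'' by the interior elliptic estimate argument just described.
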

\begin{proof}
	See \cite[7.5]{snulmenn:decay.v2}.
\end{proof}
\begin{theorem} \label{thm:decay_rates}
	Suppose $\vdim$, $\adim$, $p$, $U$, and $V$ are as in
	\ref{miniremark:situation}, $V \in \IVar_\vdim ( U )$ and
	\begin{gather*}
		\phi ( a,r,T )= \big ( r^{-\vdim} \tint{\oball{a}{r} \times
		\grass{\adim}{\vdim}}{} | \project{S} - \project{T} |^2 \ud V
		(z,S) \big )^{1/2}
	\end{gather*}
	whenever $a \in \rel^\adim$, $0 < r < \infty$, $\oball{a}{r} \subset
	U$, and $T \in \grass{\adim}{\vdim}$.

	Then the following two statements hold:
	\begin{enumerate}
		\item \label{item:decay_rates:gen} If either $\vdim = 2$ and
		$0 < \tau < 1$ or $\sup \{ 2, p \} < \vdim$ and $\tau =
		\frac{\vdim p}{2( \vdim-p)} < 1$ then
		\begin{gather*}
			\lim_{r \to 0+} r^{-\tau} \phi (a,r,T) = 0 \quad
			\text{for $V$ almost all $(a,T) \in U \times
			\grass{\adim}{\vdim}$}.
		\end{gather*}
		\item \label{item:decay_rates:log} If either $\vdim = 1$ or
		$\vdim = 2$ and $p > 1$ or $\vdim > 2$ and $p \geq 2 \vdim / (
		\vdim+2 )$ then
		\begin{gather*}
			\limsup_{r \to 0+} r^{-1} \phi (a,r,T) < \infty \quad
			\text{for $V$ almost all $(a,T) \in U \times
			\grass{\adim}{\vdim}$}.
		\end{gather*}
	\end{enumerate}
\end{theorem}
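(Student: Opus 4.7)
The plan is to prove Theorem \ref{thm:decay_rates} by combining the $\class{2}$ rectifiability from Theorem \ref{thm:main_theorem} with the iteration lemma \ref{lemma:prep_tilt}, supplemented by the prior tilt decay result \cite[8.6]{snulmenn:decay.v2} in the exponent-limited cases.

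First, by Theorem \ref{thm:main_theorem} there is a countable collection $C$ of $\vdim$ dimensional submanifolds of class $\class{2}$ covering $\|V\|$ almost all of $U$, with matching mean curvature on each $M \in C$. Standard rectifiability theory together with Lebesgue differentiation allows me to restrict attention to $V$ almost every pair $(a, T)$ for which: $a \in M$ for some $M \in C$; $T = T_a M = \Tan^\vdim(\|V\|, a)$; $\density^\vdim(\|V\|, a) = Q \in \nat$; $\density^\vdim(\|V\| \restrict U \without M, a) = 0$; $\psi(\oball{a}{r}) = O(r^\vdim)$ as $r \pluslim{0}$; and $\phi(a, r, T) \to 0$ as $r \pluslim{0}$ (the last from Brakke or \cite[8.6]{snulmenn:decay.v2}).

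To prove part \eqref{item:decay_rates:log}, I apply \ref{lemma:prep_tilt} at each sufficiently small $r$ with $Z := \cylinder{T}{a}{r}{3r} \cap M$. The cylinder mass hypotheses hold by the monotonicity formula and the density value $Q$; the bound $\|V\|(\cylinder{T}{a}{r}{3r} \without Z) \leq \varepsilon \unitmeasure{\vdim} r^\vdim$ holds for small $r$ by the density condition on $\|V\| \restrict U \without M$; tilt smallness is automatic. When the exponent $p$ of the theorem exceeds the admissible exponent of \ref{lemma:prep_tilt} (namely $p = 1$ if $\vdim \leq 2$ and $p = 2\vdim/(\vdim+2)$ if $\vdim > 2$), I first reduce to this admissible exponent by noting that $\mathbf{h}(V;\cdot) \in \Lp{p}$ locally implies $\mathbf{h}(V;\cdot) \in \Lp{p'}$ locally for any $p' \leq p$. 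The lemma then gives
\begin{equation*}
\phi(a, r/4, T) \leq \delta \phi(a, r, T) + \Gamma \bigl[ r^{-\vdim-1} \tint{Z}{} \dist(z-a, T) \ud \| V \| z + r^{1-\vdim/p} \psi(\oball{a}{6r})^{1/p} \bigr].
\end{equation*}
The $\class{2}$ regularity of $M$ at $a$ with tangent $T$ yields $\dist(z-a, T) \leq C_M(a)|z-a|^2$ for $z \in Z$, so the height term is $O(r)$, while the density estimate $\psi(\oball{a}{r}) = O(r^\vdim)$ gives the mean curvature term $O(r)$. Choosing $\delta < 1/4$ and iterating on dyadic scales produces $\phi(a, r, T) = O(r)$, establishing part \eqref{item:decay_rates:log}.

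For part \eqref{item:decay_rates:gen}, whenever the hypothesis of part \eqref{item:decay_rates:log} holds, $\phi = O(r)$ at once gives $\lim_{r \pluslim{0}} r^{-\tau} \phi(a, r, T) = 0$ for every $\tau < 1$. In the two residual cases $\vdim = 2$ with $p = 1$ and $\vdim > 2$ with $p < 2\vdim/(\vdim+2)$, I invoke \cite[8.6]{snulmenn:decay.v2}, which gives $\phi = O(r^{\tau'})$ for every $\tau' < 1$ in the former case (choosing $\tau' > \tau$ finishes the argument) and $\phi = O(r^\tau)$ at the exact critical exponent $\tau = \vdim p/(2(\vdim-p))$ in the latter. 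The chief obstacle is this last case, where the prior estimate is only of order $O$ at the rate $\tau$ where we need $o$. My plan is to split $\phi(a, r, T)^2 = r^{-\vdim}(I_M + I_{M^c})$ with $I_M \leq C_M(a)^2 r^{\vdim+2}$ from the $\class{2}$ regularity of $M$ and $I_{M^c} \leq 4\|V\|(\oball{a}{r} \without M)$, and then upgrade the prior $O(r^\tau)$ bound using the Lipschitz approximation \ref{lemma:great_approx} to obtain the refined density estimate $\|V\|(\oball{a}{r} \without M) = o(r^{\vdim+2\tau})$ at $\|V\|$ almost every $a$.
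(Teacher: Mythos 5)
Your proof of part \eqref{item:decay_rates:log} is essentially the paper's argument: reduce to the boundary exponent, apply \ref{lemma:prep_tilt} with $Z = \cylinder{T}{a}{r}{3r}\cap M$ for $M\in C$, use the $\class{2}$ height bound and the density bound on $\psi$, iterate on dyadic scales.  This part is fine.

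For part \eqref{item:decay_rates:gen}, however, the route you propose and the paper's proof diverge, and yours has a gap.  The paper obtains from \ref{thm:main_theorem} maps $R_i$ of class $\class{1}$ agreeing with $\project{\Tan^\vdim(\|V\|,\cdot)}$ on sets $A_i$ of full measure, and then cites \cite[8.6]{snulmenn:decay.v2} together with \cite[3.7\,(i)]{snulmenn.isoperimetric} to pass from the $\limsup<\infty$ bound of the former directly to a $\lim=0$ statement at the \emph{same} exponent $\tau$; the second citation carries the Lebesgue-point type upgrade and is exactly what replaces the estimate you leave unproved.  In your decomposition $\phi(a,r,T)^2 = r^{-\vdim}(I_M + I_{M^c})$ the crux is your claim that for $\|V\|$-a.e.\ $a\in M$ one has the refined density estimate
\begin{gather*}
	\|V\|(\oball{a}{r}\without M) = o\big(r^{\vdim+2\tau}\big)\quad\text{as }r\to 0+,
\end{gather*}
and this is asserted, not proved.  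What you do know at almost every $a$ is $\density^\vdim(\|V\|\restrict U\without M,a)=0$, i.e.\ $o(r^\vdim)$; upgrading to $o(r^{\vdim+2\tau})$ is a genuinely stronger statement.  The mass of a nearby sheet sharing the same tangent plane at $a$ contributes a large amount to $\|V\|(\oball{a}{r}\without M)$ but only a small amount to the tilt-excess, so tilt decay alone does not force such a rate.  Appealing to the Lipschitz approximation \ref{lemma:great_approx} is also problematic: the bad-set bounds there (through \ref{lemma:lipschitz_approximation}\,\eqref{item:lipschitz_approximation:estimate_b}) control the bad set by the tilt-excess and first variation, so trying to bootstrap an improved tilt-excess decay from them risks circularity.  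To close your argument you would need to prove the refined density estimate, or instead invoke the Lebesgue-point mechanism that \cite[3.7\,(i)]{snulmenn.isoperimetric} supplies, as the paper does.

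A smaller point: in \eqref{item:decay_rates:gen} with $\vdim>2$, the hypothesis $\tau = \frac{\vdim p}{2(\vdim-p)}<1$ is equivalent to $p< 2\vdim/(\vdim+2)$, so the range where \eqref{item:decay_rates:gen} is non-vacuous is disjoint from the range where \eqref{item:decay_rates:log} applies; your phrasing ``whenever the hypothesis of part \eqref{item:decay_rates:log} holds, \ldots'' is therefore only relevant in dimension $\vdim=2$ with $p>1$, where both conclusions hold simultaneously.
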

\begin{proof} [Proof of \eqref{item:decay_rates:gen}]
	From \ref{thm:main_theorem} one obtains a sequence of maps
	$R_i : U \to \Hom ( \rel^\adim, \rel^\adim )$ of class $\class{1}$
	such that the sets $A_i = \classification{U}{z}{ R_i (z) =
	\project{\Tan^\vdim ( \| V \|, z )}}$ cover $\| V \|$ almost all of
	$U$. By \cite[8.6]{snulmenn:decay.v2} and
	\cite[3.7\,(i)]{snulmenn.isoperimetric} one infers
	\begin{gather*}
		\lim_{r \to 0+} r^{-\tau-\vdim/2} \big ( \tint{\cball{z}{r}
		\times \grass{\adim}{\vdim}}{} | R_i (z) - \project{S} |^2 \ud
		V (\xi,S) \big )^{1/2} = 0
	\end{gather*}
	for $\| V \|$ almost all $z \in A_i$ and the conclusion follows.
\end{proof}
\begin{proof} [Proof of \eqref{item:decay_rates:log}]
	Assume that either $p = \vdim = 1$ or $1 < p < \vdim = 2$ or $1 \leq p
	< \vdim > 2$ and $\frac{\vdim p}{\vdim-p} = 2$. Choose $C$ as in
	\ref{thm:main_theorem}. Then by \ref{thm:main_theorem} and
	\cite[2.10.19\,(4), 2.9.5]{MR41:1976} for $\| V \|$ almost all $a \in
	U$ there holds for some $Q \in \nat$, $T \in \grass{\adim}{\vdim}$ and
	some $M \in C$
	\begin{gather*}
		T = \Tan ( M, a ), \quad \density^\vdim ( \| V \| \restrict
		U \without M, a ) = 0, \\
		\limsup_{r \to 0+} r^{-\vdim/p} \psi ( \cball{a}{r} )^{1/p} <
		\infty, \\
		r^{-\vdim} \tint{}{} \phi ( r^{-1} (z-a), S ) \ud V (z,S)
		\to Q \tint{T}{} \phi ( z, T ) \ud \mathscr{H}^\vdim z
		\quad \text{as $r \to 0+$}
	\end{gather*}
	whenever $\phi \in \ccspace{\rel^\adim \times \grass{\adim}{\vdim}}$.
	Note that
	\begin{gather*}
		\limsup_{r \to 0+} r^{-\vdim-2} \tint{\cylinder{T}{a}{r}{3r}
		\cap M}{} \dist ( z - a, T ) \ud \| V \| z < \infty
	\end{gather*}
	as $M$ is submanifold of class $\class{2}$. It follows with $\delta =
	2^{-\vdim-3}$, $\Delta_1 = 7^\vdim Q$ that there exist $0 < R <
	\infty$ and $0 \leq \gamma < \infty$ such that $\oball{a}{6R} \subset
	U$,
	\begin{gather*}
		r^{-\vdim-1} \tint{\cylinder{T}{a}{r}{3r} \cap M}{} \dist (
		z - a, T ) \ud \| V \| z + r^{1-\vdim/p} \psi (
		\oball{a}{6r} )^{1/p} \leq \gamma r
	\end{gather*}
	for $0 < r \leq R$, and $V$ satisfies the hypotheses of
	\ref{lemma:prep_tilt} for each $0 < r \leq R$ with $\varepsilon =
	\varepsilon_{\ref{lemma:prep_tilt}} ( \vdim, \adim, Q, p, \delta,
	\Delta_1 )$ and $M$, $Z$ replaced by $\Delta_1$,
	$\cylinder{T}{a}{r}{3r} \cap M$.  With $f ( r ) = r^{-\vdim/2} \big (
	\int_{\cylinder{T}{a}{r}{r} \times \grass{\adim}{\vdim}} | \project{S}
	- \project{T} |^2 \ud V ( z, S ) \big )^{1/2}$ for $0 < r \leq R$
	one defines
	\begin{gather*}
		\Delta_2 = \Gamma_{\ref{lemma:prep_tilt}} ( \vdim, \adim, Q,
		p, \delta, \Delta_1), \quad \Delta_3 = \sup \big \{
		2^{\vdim+3} \Delta_2 \gamma, 2^{\vdim+2} R^{-1} f (R) \big \},
	\end{gather*}
	one inductively infers from \ref{lemma:prep_tilt}
	\begin{gather*}
		f(r) \leq \Delta_3 r \quad \text{whenever $0 < r \leq R$};
	\end{gather*}
	in fact it holds for $R/4 \leq r \leq R$ and, provided it holds for
	$r$,
	\begin{gather*}
		f(r/4) \leq 2^\vdim ( \delta \Delta_3 r + \Delta_2 \gamma
		r ) \leq \Delta_3 (r/4)
	\end{gather*}
	by \ref{lemma:prep_tilt}. The conclusion is now evident.
\end{proof}
\begin{remark}
	Having \ref{thm:main_theorem} at one's disposal, the proof of
	\eqref{item:decay_rates:log} follows Sch\"atzle in \cite[Theorem
	3.1]{rsch:willmore} where the case $p \geq 2$ is treated. In extending
	the result to the present case, the main difference is the use of the
	coercive estimate in \cite[3.9]{snulmenn:decay.v2} in the proof of
	\ref{lemma:prep_tilt} replacing the use of corresponding estimate in
	Brakke \cite[5.5]{MR485012} (see also Allard \cite[8.13]{MR0307015}).
\end{remark}
\begin{remark} \label{remark:diff_optimal}
	For both parts the family of examples provided in
	\cite[1.2]{snulmenn.isoperimetric} shows that if $\vdim > 2$ then $p$
	cannot be replaced by any smaller number, see
	\cite[8.7]{snulmenn:decay.v2}.
\end{remark}
\begin{remark} \label{remark:l2_differentiability}
	In case of \eqref{item:decay_rates:log} combining this result with
	\cite[3.9]{snulmenn.isoperimetric}, one obtains
	\begin{gather*}
		{\textstyle\fint_{\cball{a}{r}}} (|R(z)-R(a)-\left< R(a)(z-a),
		\ap D R (a) \right > |/|z-a|)^2 \ud \| V \|
		z \to 0
	\end{gather*}
	as $r \to 0+$ for $\| V \|$ almost all $a$ where $R(z) =
	\project{\Tan^\vdim ( \| V \|, z )}$ and the approximate differential
	is taken with respect to $( \| V \|, \vdim )$.
\end{remark}
\begin{remark}
	Clearly, one can also obtain decay results for height quantities from
	this result by use of \cite[4.11]{snulmenn.poincare}.
\end{remark}
\appendix
\section{Lebesgue points for a distribution} \label{sec:app}
In this Appendix the part $q=1$ of Theorem \ref{ithm:distribution} of the
Introduction is provided. Its purpose is to clarify the relations of the
sets $A_1$ and $A_2$ occurring in \ref{lemma:arbi}.
\begin{lemma} \label{lemma:rademacher_error}
	Suppose $\vdim,\adim \in \nat$, $\vdim < \adim$, $A$ is a closed
	subset of $\rel^\vdim$, $R \in \mathscr{D}' ( \rel^\vdim ,
	\rel^\codim)$, $\dist ( \spt R, A ) > 0$, $0 \leq \gamma < \infty$,
	and $0 < r < \infty$ such that
	\begin{gather*}
		\dnorm{R}{1}{x,\varrho} \leq \gamma \, \varrho^{\vdim+1} \quad
		\text{whenever $0 < \varrho < 5r$, $x \in A$}.
	\end{gather*}

	Then
	\begin{gather*}
		\dnorm{R}{1}{a,r} \leq \Gamma \, \gamma \, r \,
		\mathscr{L}^\vdim ( \cball{a}{4r} \without A ) \quad \text{for
		$a \in A$}
	\end{gather*}
	where $\Gamma$ is a positive, finite number depending only on $\vdim$.
\end{lemma}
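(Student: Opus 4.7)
The plan is to test $R$ against an arbitrary $\theta \in \mathscr{D} ( \rel^\vdim, \rel^\codim )$ with $\spt \theta \subset \oball{a}{r}$ and $\norm{D \theta}{\infty}{a,r} \leq 1$, bound $R ( \theta )$ by the asserted right hand side, and then take the supremum to recover $\dnorm{R}{1}{a,r}$. The idea is to decompose $\theta$ via a Whitney type partition of unity of $\rel^\vdim \without A$, so that each piece $\theta \varphi_s$ is supported near a chosen $x_s \in A$ at scale $\dist ( s, A )$, and the hypothesis of the lemma becomes applicable term by term.

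Concretely I would apply \cite[3.1.13]{MR41:1976} to the open set $\rel^\vdim \without A$ at scale $h ( s ) = \tfrac{1}{20} \dist ( s, A )$ to obtain a countable $S \subset \rel^\vdim \without A$ and functions $\varphi_s \in \mathscr{E} ( \rel^\vdim )$ with $\spt \varphi_s \subset \cball{s}{\tfrac{1}{2} \dist ( s, A )}$, $| D \varphi_s | \leq \Delta_1 / \dist ( s, A )$, $\sum_{s \in S} \varphi_s = 1$ on $\rel^\vdim \without A$, and bounded overlap with constant depending only on $\vdim$. Because $\dist ( \spt R, A ) > 0$, this partition equals $1$ on an open neighbourhood of $\spt R$, so $R ( \theta ) = \sum_s R ( \theta \varphi_s )$ (a finite sum, as $\spt \theta$ is compact). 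For each $s$ with $\spt \varphi_s \cap \oball{a}{r} \neq \emptyset$ I would pick $x_s \in A$ closest to $s$; the triangle inequality combined with $\dist ( s, A ) \leq | s - a |$ forces $\dist ( s, A ) \leq 2 r$, so $\spt ( \theta \varphi_s ) \subset \oball{x_s}{\varrho_s}$ with $\varrho_s = \tfrac{3}{2} \dist ( s, A ) \leq 3 r < 5 r$, and furthermore $\cball{s}{\tfrac{1}{2} \dist ( s, A )} \subset \cball{a}{4 r} \without A$.

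The hypothesis of the lemma then yields $\dnorm{R}{1}{x_s, \varrho_s} \leq \gamma \varrho_s^{\vdim+1}$; using $\norm{\theta}{\infty}{a,r} \leq r \norm{D \theta}{\infty}{a,r} \leq r$ (since $\theta$ vanishes on $\partial \oball{a}{r}$) together with Leibniz's formula one obtains
\begin{gather*}
\norm{D ( \theta \varphi_s )}{\infty}{x_s, \varrho_s} \leq 1 + r \Delta_1 / \dist ( s, A ) \leq \Delta_2 \, r / \dist ( s, A ),
\end{gather*}
whence $| R ( \theta \varphi_s ) | \leq \Delta_3 \gamma r \, \dist ( s, A )^\vdim$. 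Summing over $s$ and invoking bounded overlap of the balls $\cball{s}{\tfrac{1}{2} \dist ( s, A )}$, all contained in $\cball{a}{4 r} \without A$, gives $\sum_s \dist ( s, A )^\vdim \leq \Delta_4 \mathscr{L}^\vdim ( \cball{a}{4 r} \without A )$, hence $| R ( \theta ) | \leq \Delta_5 \gamma r \, \mathscr{L}^\vdim ( \cball{a}{4 r} \without A )$, and taking the supremum over admissible $\theta$ produces the conclusion. The main obstacle is the constant bookkeeping around the Whitney partition: one must fix $h ( s )$ and the blow up factor in $\varrho_s$ so that simultaneously $\varrho_s < 5 r$ (to apply the hypothesis), $\spt \varphi_s \subset \cball{a}{4 r}$ (to match the asserted right hand side), and the supporting balls retain bounded overlap with a constant depending only on $\vdim$.
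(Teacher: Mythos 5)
Your proposal is correct and follows essentially the same route as the paper's proof: test $R$ against $\theta \in \mathscr{D}(\rel^\vdim, \rel^\codim)$ supported in $\oball{a}{r}$ with $\norm{D\theta}{\infty}{a,r} \leq 1$, decompose via the Whitney-type partition of \cite[3.1.13]{MR41:1976}, estimate each piece $R(\theta\varphi_s)$ by applying the hypothesis at the nearest point $\xi(s) \in A$ at scale $\tfrac{3}{2}\dist(s,A) < 5r$, and sum using disjointedness of the balls $\cball{s}{h(s)} \subset \cball{a}{4r}\without A$. Two technical points need tightening. First, in \cite[3.1.13]{MR41:1976} one has $h(x) = \tfrac{1}{20}\min\{1,\dist(x,A)\}$, so the identifications $10h(s) = \tfrac{1}{2}\dist(s,A)$ and $h(s) = \tfrac{1}{20}\dist(s,A)$ only hold when $\dist(s,A) \leq 1$; the statement is scale-covariant, so normalize $r \leq \tfrac{2}{9}$ (as the paper does) so that $\dist(s,A) \leq 2r < 1$ holds automatically for the relevant $s$. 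Second, the parenthetical ``a finite sum, as $\spt\theta$ is compact'' gives the wrong reason: the pieces $\varphi_s$ accumulate toward $A \cap \cball{a}{r}$, so infinitely many of them meet $\spt\theta$. What actually makes the sum finite is $\dist(\spt R, A) > 0$, which forces $\dist(s,A)$ to be bounded away from zero whenever $\spt\varphi_s$ meets $\spt R$, and then disjointedness of the $\cball{s}{h(s)}$ inside a bounded set gives finitely many nonzero terms $R(\theta\varphi_s)$. The paper makes this transparent by choosing $\Phi = \{\rel^\vdim\without A,\, \rel^\vdim\without B\}$ with $B$ a compact $\varepsilon/2$-neighbourhood of $\spt(R \restrict \theta)$, so the partition is global on $\rel^\vdim$ and only the finitely many $v_s$ whose support meets $B$ contribute; your single-set version works once the finiteness is argued as above.
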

\begin{proof}
	Assume $r \leq \frac{2}{9}$, let $a \in A$, $\theta \in \mathscr{D} (
	\rel^\vdim, \rel^\codim )$ with $\spt \theta \subset \oball{a}{r}$,
	choose $0 < \varepsilon \leq \min \{ r, \dist ( \spt R, A ) \}$,
	define
	\begin{gather*}
		B = \classification{\rel^\vdim}{x}{\dist ( x, \spt ( R
		\restrict \theta ) ) \leq \varepsilon / 2}
	\end{gather*}
	where $R \restrict \theta \in \mathscr{E}_0 ( \rel^\vdim )$ is defined
	by $( R \restrict \theta ) ( v ) = R ( v \theta )$ for $v \in
	\mathscr{E}^0 ( \rel^\vdim )$, and apply \cite[3.1.13]{MR41:1976} to
	obtain $S$, $v_s$, and $h$ with $\Phi = \{ \rel^\vdim \without A,
	\rel^\vdim \without B \}$; in particular $S$ is a countable subset of
	$\bigcup \Phi$,
	\begin{gather*}
		h (x) = {\textstyle\frac{1}{20}} \max \{ \min \{ 1, \dist
		(x,A) \}, \min \{ 1, \dist (x,B) \} \} \quad \text{for $x \in
		{\textstyle\bigcup} \Phi$}
	\end{gather*}
	and $v_s$ for $s \in S$ form a partition of unity on $\bigcup \Phi$
	with $\spt v_s \subset \cball{s}{10h(s)}$ for $s \in S$. Noting
	$\bigcup \Phi = \rel^\vdim$, one defines $T = \classification{S}{s}{B
	\cap \spt v_s \neq \emptyset}$ and infers
	\begin{gather*}
		{\textstyle\sum_{s \in S \without T}} v_s (x) = 0 \quad
		\text{for $x \in \rel^\vdim$ with $\dist (x, \spt ( R
		\restrict \theta ) ) < \varepsilon / 2$},
	\end{gather*}
	hence $( R \restrict \theta ) ( \sum_{s \in S \without T} v_s ) = 0$
	and
	\begin{gather*}
		R ( \theta ) = R \big ( ( {\textstyle\sum_{s \in T}} v_s )
		\theta \big ) = {\textstyle\sum_{s \in T}} R ( v_s \theta ).
	\end{gather*}

	Choose $\xi (s) \in A$ for each $s \in T$ such that $|s-\xi(s)| =
	\dist (s, A)$. If $s \in T$ then there exists $y \in B \cap \spt v_s
	\subset \cball{a}{r + \varepsilon / 2}$ and one observes
	\begin{gather*}
		\dist ( y, A ) \leq | y - a | \leq r + \varepsilon / 2 \leq (
		3 / 2 ) r \leq {\textstyle\frac{1}{3}} < 1, \quad h (y) =
		{\textstyle\frac{1}{20}} \dist ( y, A ), \\
		| s - y | \leq 10 h (s) \leq 10 h (y) +
		{\textstyle\frac{1}{2}} | s - y |, \quad | s - y | \leq 20 h
		(y) = \dist ( y, A ) \leq |y-a|, \\
		\dist ( s, A ) \leq | s - y | + \dist ( y, A ) \leq 2 \dist (
		y, A ) \leq 3r \leq {\textstyle\frac{2}{3}} < 1, \\
		B \cap \cball{s}{10 h(s)} \neq \emptyset, \quad
		{\textstyle\frac{1}{20}} \dist ( s, B ) \leq
		{\textstyle\frac{1}{2}} h(s), \quad 0 < h
		(s) = {\textstyle\frac{1}{20}} \dist ( s, A ), \\
		| s - \xi (s) | \leq | s - a | \leq | s - y | + | y - a | \leq
		2 r + \varepsilon \leq 3r \leq {\textstyle\frac{2}{3}}, \\
		\cball{s}{h(s)} \subset \cball{a}{4r} \without A.
	\end{gather*}
	Moreover, for any $x \in \cball{s}{10h(s)}$, $s \in T$
	\begin{gather*}
		| x - \xi (s) | \leq | x - s | + | s - \xi (s) | \leq ( 3/2 )
		| s - \xi (s) | < 5r, \\
		\spt v_s \subset \cball{\xi (s)}{ ( 3/2 ) | s - \xi (s) |
		}, \\
		\dist ( s, A ) \leq \dist ( x, A ) + | x - s | \leq \dist ( x,
		A ) + {\textstyle\frac{1}{2}} \dist ( s, A), \\
		| s - \xi (s) | = \dist ( s, A ) \leq 2 \dist ( x, A ), \\
		\dist ( x, A ) \leq \dist ( s, A ) + | x - s | \leq
		{\textstyle\frac{3}{2}} \dist ( s, A ) \leq 1, \\
		h (x) \geq {\textstyle\frac{1}{20}} \dist (  x, A ) \geq
		{\textstyle\frac{1}{40}} | s - \xi (s) |.
	\end{gather*}

	Using the estimates of the preceding paragraph and the estimates of
	$|Dv_s|$ given in \cite[3.1.13]{MR41:1976}, one infers for $s \in T$,
	since $\theta$ has compact support in $\oball{a}{r}$,
	\begin{gather*}
		\norm{( Dv_s ) \theta}{\infty}{a,r} \leq 40 \Delta | s - \xi
		(s) |^{-1} r \norm{D \theta}{\infty}{a,r}, \\
		\norm{D ( v_s \theta )}{\infty}{a,r} \leq 40 \Delta ( | s -
		\xi (s) |^{-1} r + 1 ) \norm{D \theta}{\infty}{a,r}
	\end{gather*}
	where $\Delta$ is a positive, finite number depending only on $\vdim$
	with $40 \Delta \geq 1$, hence
	\begin{gather*}
		\begin{aligned}
			| R ( v_s \theta ) | & \leq \gamma ( 3/2 )^{\vdim+1} |
			s - \xi (s) |^{\vdim+1} 40 \Delta ( | s - \xi (s) |^{-1}
			r + 1 ) \norm{D \theta}{\infty}{a,r} \\
			& = \gamma ( 3/2 )^{\vdim+1} 40 \Delta | s - \xi (s)
			|^\vdim ( r + | s - \xi (s) | ) \norm{D
			\theta}{\infty}{a,r} \\
			& \leq \gamma 160 \Delta (3/2)^{\vdim+1}
			\unitmeasure{\vdim}^{-1} (20)^\vdim r \,
			\mathscr{L}^\vdim ( \cball{s}{h(s)}) \, \norm{D
			\theta}{\infty}{a,r}.
		\end{aligned}
	\end{gather*}
	Recalling from \cite[3.1.13]{MR41:1976} that the family $\{
	\cball{s}{h(s)} \with s \in S \}$ is disjointed, one concludes
	\begin{gather*}
		| R ( \theta ) | \leq \Gamma \, \gamma \, r \,
		\mathscr{L}^\vdim ( \cball{a}{4r} \without A ) \norm{D
		\theta}{\infty}{a,r}
	\end{gather*}
	where $\Gamma = 8 ( 30 )^{\vdim+1} \Delta \unitmeasure{\vdim}^{-1}$.
\end{proof}
\begin{remark}
	Some ideas of the proof were taken from Calder\'on and Zygmund
\cite[Theorem 10]{MR0136849} and \cite[2.9.17]{MR41:1976}.
\end{remark}
\begin{theorem} \label{thm:distrib_diff}
	Suppose $\vdim,\adim \in \nat$, $\vdim < \adim$, $U$ is an open subset
	of $\rel^\vdim$, $T \in \mathscr{D}' ( U, \rel^\codim )$, and $A$
	denotes the set of all $a \in U$ such that
	\begin{gather*}
		\limsup_{r \pluslim{0}} r^{-1-\vdim} \dnorm{T}{1}{a,r} < \infty.
	\end{gather*}

	Then $A$ is a Borel set and for $\mathscr{L}^\vdim$ almost all $a \in A$
	there exists a unique constant distribution $T_a \in \mathscr{D}' ( U,
	\rel^\codim )$ such that
	\begin{gather*}
		\lim_{r \pluslim{0}} r^{-1-\vdim} \dnorm{T-T_a}{1}{a,r} = 0.
	\end{gather*}
	Moreover, $T_a$ depends $\mathscr{L}^\vdim \restrict A$ measurably on
	$a$.
\end{theorem}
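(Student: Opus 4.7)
The uniqueness of $T_a$ is contained in Miniremark \ref{miniremark:constant_distributions}, so only the Borel nature of $A$, the existence of $T_a$ for $\mathscr{L}^\vdim$-a.e.\ $a \in A$, and the measurable dependence of $T_a$ on $a$ require proof. The plan is to reduce to $\mathscr{L}^\vdim$-density points of closed level sets of the limsup, to construct $T_a$ as a limit of local averages of $T$ against rescaled bump functions, and to force the convergence by Lemma \ref{lemma:rademacher_error}.

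For $k \in \nat$ set
\begin{gather*}
A_k = \bigclassification{U}{a}{\dist(a,\rel^\vdim \setminus U) \geq 2/k \text{ and } \dnorm{T}{1}{a,r} \leq k r^{\vdim+1} \text{ for $0 < r < 1/k$}}.
\end{gather*}
Computing $\dnorm{T}{1}{a,r}$ as a supremum of $T(\theta)$ over the countable family of admissible test functions obtained by translating and rescaling a countable dense subset of $\mathscr{D}(\oball{0}{1},\rel^\codim)$, one sees, for each fixed $r$, that $a \mapsto \dnorm{T}{1}{a,r}$ is lower semicontinuous; each $A_k$ is therefore closed and $A = \bigcup_k A_k$ is Borel. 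By the Lebesgue density theorem it is then enough to construct $T_a$ at $\mathscr{L}^\vdim$-density points $a$ of each $A_k$. The candidate is built by local averaging: fix $\phi \in \mathscr{D}(\oball{0}{1},\rel)$ with $\int\phi\ud\mathscr{L}^\vdim = 1$ and for $a \in A_k$, $0 < r < 1/k$, $j \in \{1,\dots,\codim\}$, set $y_{a,r}^j = T(r^{-\vdim}\phi((\cdot-a)/r)e_j)$, so that the hypothesis on $A_k$ gives $|y_{a,r}| \leq \Delta$ with $\Delta$ depending only on $k$ and $\phi$.

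The task is then to show that $y_{a,r}$ is Cauchy as $r \to 0+$ with some limit $y_a$, and that $T_a = y_a\,\mathscr{L}^\vdim$ satisfies $r^{-\vdim-1}\dnorm{T - T_a}{1}{a,r} \to 0$. The key mechanism is Lemma \ref{lemma:rademacher_error} applied to an auxiliary distribution $R = T\cdot\eta$, defined by $R(\theta) = T(\eta\theta)$, where $\eta \in \mathscr{E}^\infty(\rel^\vdim,\rel)$ vanishes on a neighborhood of $A_k$ and equals $1$ outside a slightly larger neighborhood: then $\dist(\spt R, A_k) > 0$, and the Leibnitz rule combined with $\dnorm{T}{1}{x,\varrho} \leq k\varrho^{\vdim+1}$ for $x \in A_k$ gives $\dnorm{R}{1}{x,\varrho} \leq \gamma\varrho^{\vdim+1}$ for $\varrho$ below a scale determined by $\eta$, with $\gamma$ depending on $k$ and $\|D\eta\|_\infty$. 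Lemma \ref{lemma:rademacher_error} then upgrades this to $\dnorm{R}{1}{a,r} \leq \Gamma\gamma r\,\mathscr{L}^\vdim(\cball{a}{4r}\setminus A_k)$, which at a density point is $o(r^{\vdim+1})$. By tailoring the scale of $\eta$ to each $r$ via a Whitney-type decomposition of $U \setminus A_k$, the complementary piece $T\cdot(1-\eta)$, supported in a shrinking neighborhood of $A_k$, is analyzed against the bump at scale $r$ and yields both the Cauchy behaviour of $y_{a,r}$ and the decay of $T - T_a$.

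The main obstacle will be the precise scale-dependent choice of the cutoff $\eta$ that simultaneously secures the hypotheses of Lemma \ref{lemma:rademacher_error} (requiring $\spt R$ at positive distance from $A_k$) and keeps $T\cdot(1-\eta)$ close to a constant distribution at $a$; a Whitney decomposition of $U \setminus A_k$ as employed in the proof of \ref{lemma:rademacher_error} itself is the natural vehicle. Once locally uniform convergence of $y_{a,r}$ on $\mathscr{L}^\vdim$-a.e.\ point of each $A_k$ is established, the $\mathscr{L}^\vdim \restrict A$ measurability of $a \mapsto T_a$ is automatic from the pointwise limit definition.
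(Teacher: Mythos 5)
Your reduction to the closed sets $A_k$, the lower semicontinuity argument for Borel measurability, the identification of Lemma \ref{lemma:rademacher_error} as the engine, and the use of \ref{miniremark:constant_distributions} for uniqueness all match the paper's strategy. But the central mechanism is different, and I think your version has a real gap.

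You split $T = T\eta + T(1-\eta)$ where $\eta$ is a spatial cutoff vanishing near $A_k$, hope to push the ``far'' piece $T\eta$ through Lemma \ref{lemma:rademacher_error}, and then claim the ``near'' piece $T(1-\eta)$ converges to a constant distribution. The trouble is that multiplying $T$ by $1-\eta$ (which equals $1$ on a neighborhood of $A_k$) does nothing to regularize $T$ near $a \in A_k$: whatever singular behaviour $T$ has near $A_k$, $T(1-\eta)$ has it too. The proposed Cauchy argument for the averages $y_{a,r} = T(r^{-\vdim}\phi((\cdot-a)/r)e_j)$ does not close on the information you have either: comparing scales $r$ and $r/2$ one only gets $|y_{a,r}-y_{a,r/2}| \leq \dnorm{T}{1}{a,r}\,\|D(\Phi_{a,r}-\Phi_{a,r/2})\|_\infty \lesssim k$, which is bounded but not small, so there is no a priori convergence. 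You correctly flag the obstacle yourself (``the main obstacle will be...''), but the gesture toward a Whitney decomposition does not say how the $T(1-\eta)$ piece becomes approximately constant, and the Lemma \ref{lemma:rademacher_error} proof already uses such a decomposition for a different purpose.

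The paper resolves exactly this point by doing the regularization \emph{before} the cutoff, not after: it mollifies, $T_\varepsilon = T * \Phi_\varepsilon$, which is represented by a smooth density $f_\varepsilon$; the hypothesis $\dnorm{T}{1}{x,\varrho} \leq i\varrho^{\vdim+1}$ for $x \in A_i$ translates into a \emph{uniform $L^\infty$ bound} on $f_\varepsilon$ near $A_i$. It then splits $T_\varepsilon = S_\varepsilon + R_\varepsilon$ with $S_\varepsilon = a_\varepsilon f_\varepsilon\,\mathscr{L}^\vdim$ (indicator of an $\varepsilon$-neighborhood of $A_i$ times $f_\varepsilon$), so $S_\varepsilon$ has uniformly bounded density, and $R_\varepsilon$ is supported at distance $\geq \varepsilon$ from $A_i$. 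Lemma \ref{lemma:rademacher_error} handles the $R_\varepsilon$ uniformly in $\varepsilon$. Finally, a weak-$\ast$ limit (via separability of $\Lp{1}$ and Alaoglu) produces $S = f\,\mathscr{L}^\vdim$ with $f \in \Lp{\infty}$, and the constant $T_a$ is $f(a)\,\mathscr{L}^\vdim$ at Lebesgue points of $f$; no pointwise Cauchy argument for $y_{a,r}$ is ever needed. That mollification-before-cutoff step, together with the weak-$\ast$ compactness argument, is the idea your proposal is missing, and without it the analysis of the near-$A_k$ piece does not go through.
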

\begin{proof}
	The conclusion is local, hence one may assume $\spt T$ to be compact
	and $U=\rel^\vdim$. Since $\dnorm{T}{1}{a,r}$ depends lower
	semicontinuously on $(a,r)$, the sets
	\begin{gather*}
		A_i =
		\classification{\rel^\vdim}{a}{\text{$\dnorm{T}{1}{a,r} \leq i
		\, r^{\vdim+1}$ for $0 < r < (10)/i$}}
	\end{gather*}
	defined for $i \in \nat$ are closed. Observing $A = \bigcup \{ A_i
	\with i \in \nat \}$, the conclusion will be shown to hold
	for $\mathscr{L}^\vdim$ almost all $a \in A_i$.

	Let $0 < \varepsilon < 5/i$, choose $\Phi \in \mathscr{D}^0 (
	\rel^\vdim )$ with $\int \Phi \ud \mathscr{L}^\vdim = 1$,
	$\spt \Phi \subset \oball{0}{1}$ and define $\Phi_\varepsilon
	(x) = \varepsilon^{-\vdim} \Phi (\varepsilon^{-1} x)$ for $x \in
	\rel^\vdim$,
	\begin{gather*}
		T_\varepsilon ( \theta ) = T ( \Phi_\varepsilon \ast \theta )
		= {\textstyle\int} f_\varepsilon \bullet \theta \ud
		\mathscr{L}^\vdim \quad \text{for $\theta \in \mathscr{D} (
		\rel^\vdim, \rel^\codim )$}
	\end{gather*}
	with $f_\varepsilon \in \mathscr{E} ( \rel^\vdim, \rel^\codim
	)$ given by
	\begin{gather*}
		z \bullet f_\varepsilon (x) = T_y ( \Phi_\varepsilon ( y-x ) z
		) \quad \text{whenever $x \in \rel^\vdim$ and $z \in
		\rel^\codim$},
	\end{gather*}
	see \cite[4.1.2]{MR41:1976}. Clearly $T_\varepsilon \to T$ as
	$\varepsilon \pluslim{0}$ and
	\begin{gather*}
		| f_\varepsilon (x) | \leq i 2^{\vdim+1} \norm{D
		\Phi}{\infty}{0,1} \quad \text{for $x \in \rel^\vdim$,
		$a \in A_i$ with $|x-a| \leq \varepsilon$}.
	\end{gather*}
	One defines $a_\varepsilon$ to be the characteristic function of
	$\classification{\rel^\vdim}{x}{\dist (x, A_i ) \leq \varepsilon}$
	and $S_\varepsilon, R_\varepsilon \in \mathscr{D}' ( \rel^\vdim,
	\rel^\codim )$ by
	\begin{gather*}
		S_\varepsilon ( \theta ) = {\textstyle\int} a_\varepsilon
		f_\varepsilon \bullet \theta \ud \mathscr{L}^\vdim \quad
		\text{for $\theta \in \mathscr{D} ( \rel^\vdim , \rel^\codim
		)$}, \quad R_\varepsilon = T_\varepsilon - S_\varepsilon.
	\end{gather*}

	Estimating for $a \in A_i$, $0 < \varrho < 5r < 5 / i$, $\theta \in
	\mathscr{D} ( \rel^\vdim, \rel^\codim )$ with $\spt \theta
	\subset \oball{a}{\varrho}$ and $\norm{D \theta}{\infty}{a,\varrho}
	\leq 1$
	\begin{gather*}
		\spt ( \Phi_\varepsilon \ast \theta ) \subset
		\oball{a}{\varepsilon+\varrho}, \qquad | T_\varepsilon (
		\theta ) | \leq i ( \varepsilon + \varrho )^{\vdim+1} \leq i
		2^{\vdim+1} \varrho^{\vdim+1} \quad \text{if $\varepsilon \leq
		\varrho$}, \\
		\eqclassification{\spt R_\varepsilon}{x}{\dist (x,A_i) <
		\varepsilon} = \emptyset, \quad R_\varepsilon ( \theta ) = 0
		\quad \text{if $\varepsilon > \varrho$}, \\
		| S_\varepsilon ( \theta ) | \leq \norm{a_\varepsilon
		f_\varepsilon}{\infty}{a,\varrho} \, \norm{\theta}
		{1}{a,\varrho} \leq i 2^{\vdim+1} \norm{D \Phi}{\infty}{0,1}
		\unitmeasure{\vdim} \varrho^{\vdim+1} \\
		\dnorm{R_\varepsilon}{1}{a,\varrho} \leq \gamma \,
		\varrho^{\vdim+1} \quad \text{with $\gamma =
		2^{\vdim+1} i \big ( 1 + \norm{D \Phi}{\infty}{0,1} \,
		\unitmeasure{\vdim} \big )$},
	\end{gather*}
	Now, \ref{lemma:rademacher_error} may be applied with $A$, $R$
	replaced by $A_i$, $R_\varepsilon$ to obtain
	\begin{gather*}
		\dnorm{R_\varepsilon}{1}{a,r} \leq \Gamma \, \gamma \, r \,
		\mathscr{L}^\vdim ( \cball{a}{4r} \without A_i ) \quad
		\text{for $0 < r < 1/i$}.
	\end{gather*}

	Since $\Lp{1} ( \mathscr{L}^\vdim , \rel^\codim )$ is separable, one
	can use
	\cite[\printRoman{5}.4.2,\,\printRoman{5}.5.1,\,\printRoman{4}.8.3]{MR90g:47001a}
	to infer the existence of $S \in \mathscr{D} ' ( \rel^\vdim,
	\rel^\codim )$, $f \in \Lp{\infty} ( \mathscr{L}^\vdim, \rel^\codim )$
	and a sequence $\varepsilon_j$ with $\varepsilon_j \downarrow 0$ as $j
	\to \infty$ such that
	\begin{gather*}
		S ( \theta ) = {\textstyle\int} f \bullet \theta \ud
		\mathscr{L}^\vdim \quad \text{for $\theta \in \mathscr{D} (
		\rel^\vdim, \rel^\codim )$}, \quad S_{\varepsilon_j}
		\to S \quad \text{as $j \to \infty$}.
	\end{gather*}
	Defining $R = T - S$ and noting $R_{\varepsilon_j} \to R$ as $j \to
	\infty$,
	\begin{gather*}
		\dnorm{R}{1}{a,r} \leq \Gamma \, \gamma \, r \,
		\mathscr{L}^\vdim ( \cball{a}{4r} \without A_i) \quad
		\text{for $0 < r < 1/i$}
	\end{gather*}
	and \cite[2.9.11]{MR41:1976} implies
	\begin{gather*}
		\lim_{r \pluslim{0}} r^{-1-\vdim} \dnorm{R}{1}{a,r} = 0
		\quad \text{for $\mathscr{L}^\vdim$ almost all $a \in A_i$}.
	\end{gather*}
	Moreover,
	\begin{gather*}
		\big | {\textstyle\int} ( f(x) - f (a) ) \bullet \theta (x)
		\ud \mathscr{L}^\vdim x \big | \leq \big (
		{\textstyle\int_{\oball{a}{r}}} | f(x) - f (a) | \ud
		\mathscr{L}^\vdim x \big ) \, r \, \norm{D
		\theta}{\infty}{a,r}
	\end{gather*}
	whenever $a \in A$, $0 < r < \infty$, $\theta \in \mathscr{D} (
	\rel^\vdim, \rel^\codim )$ with $\spt \theta \subset \oball{a}{r}$ and \cite[2.9.9]{MR41:1976} implies that one can take $T_a$
	defined by $T_a ( \theta ) = \int \theta (x) \bullet f(a) \ud
	\mathscr{L}^\vdim x$ for $\theta \in \mathscr{D} ( \rel^\vdim,
	\rel^\codim )$ for $\mathscr{L}^\vdim$ almost all $a \in A_i$ in the
	existence part of the conclusion.

	The uniqueness follows from \ref{miniremark:constant_distributions}.
\end{proof}
\begin{remark}
	The splitting of $T$ into $S$ and $R$ was inspired by a similar
	procedure for functions used by Calder\'on and Zygmund in
	\cite[Theorem 7]{MR0136849}.
\end{remark}
{\small \noindent
Max-Planck-Institute for Gravitational Physics (Albert-Einstein-Institute),
\newline OT Golm, Am M{\"u}hlen\-berg 1, DE-14476 Potsdam, Germany \newline
\texttt{Ulrich.Menne@aei.mpg.de}}
\addcontentsline{toc}{section}{\numberline{}References}
\bibliography{C2Rekt} \bibliographystyle{alpha}
\end{document}